\numberwithin{equation}{section}
\newtheorem{theorem}{Theorem}[section]
\newtheorem{proposition}[theorem]{Proposition}
\newtheorem{corollary}[theorem]{Corollary}
\newtheorem{lemma}[theorem]{Lemma}
\theoremstyle{definition}
\newtheorem{remark}[theorem]{Remark}
\newcommand{\ds}{\displaystyle}
\begin{document}

\title[Fractional Schr\"{o}dinger Equations]
{Semi-classical solutions for Fractional Schr\"{o}dinger Equations with Potential Vanishing at Infinity}

 \author{Xiaoming An,\,\,\,\,Shuangjie Peng\,\,\,and\,\,\,Chaodong Xie }

 \address{School of Mathematics and Statistics\, \&\, Hubei Key Laboratory of Mathematical Sciences, Central China Normal
University, Wuhan, 430079, P. R. China}

\email{ 651547603@qq.com}

\address{School of Mathematics and Statistics\, \&\, Hubei Key Laboratory of Mathematical Sciences, Central China
Normal University, Wuhan, 430079, P. R. China }

\email{ sjpeng@mail.ccnu.edu.cn}

\address{School of Economics\,\& Management, Guizhou University for Ethinic Minorities, Guiyang, 550025, P. R. China}

\email{ 776527080@qq.com}

\begin{abstract}
We study the following fractional Schr\"{o}dinger equation
\begin{equation}\label{eq0.1}
\varepsilon^{2s}(-\Delta)^s u + Vu = |u|^{p - 2}u, \,\,x\in\,\,\mathbb{R}^N.
\end{equation}
We show that if the external potential $V\in C(\mathbb{R}^N;[0,\infty))$ has a local minimum and $p\in (2 + 2s/(N - 2s), 2^*_s)$, where $2^*_s=2N/(N-2s),\,N\ge 2s$, the problem has a family of solutions concentrating at the local minimum of $V$ provided that $\liminf_{|x|\to \infty}V(x)|x|^{2s} > 0$. The proof is based on  variational methods and  penalized technique.

{\bf Key words: }  fractional Schr\"{o}dinger; vanishing potential; penalized technique; variational methods.
%
\end{abstract}

\maketitle

\section{Introduction and main results}

In this paper, we consider the fractional Schr\"{o}dinger equation
\begin{equation}\label{eq1.1}
\varepsilon^{2s}(-\Delta)^s u + Vu = |u|^{p - 2}u,\,\,x\in\,\,\mathbb{R}^N,
\end{equation}
where $N > 2s$, $s\in (0,1)$, $V$ is  continuous function, $\varepsilon > 0$ is a small parameter, $p\in (2 + 2s/(N - 2s), 2^*_s)$, $2^*_s=2N/(N-2s)$. Problem~\eqref{eq1.1}  is derived from the study of time-independent waves $\psi(x,t) = e^{-iEt}u(x)$ of the following nonlinear fractional Schr\"{o}dinger equation
$$
i\varepsilon\frac{\partial{\psi}}{{\partial t}} = \varepsilon^{2s}(-\Delta)^s{\psi} + U(x)\psi - f(\psi)\ x\in \mathbb{R}^N.
\eqno(NLFS)
$$
Indeed, letting $f(t) = |t|^{p - 2}t$ and inserting $\psi(x,t) = e^{-iEt}u(x)$ into $(NLFS)$, one can obtain \eqref{eq1.1} with  $V(x) = U(x) - E$.

When $s = \frac{1}{2}$, equation \eqref{eq1.1} can be used to describe some properties of Einstein's theory of relativity and also has been derived as models of many physical phenomena, such as phase transition, conservation laws, especially in fractional quantum mechanics, etc., \cite{1}. $(NLFS)$ was introduced by Laskin (\cite{2}, \cite{3}) as an extension of the classical nonlinear Schr\"{o}dinger equations s = 1 in which the Brownian motion of the quantum
paths is replaced by a L\`{e}vy flight. To see more physical backgrounds, we refer to \cite{4}.

When $s = 1$, the nonlinear Schr\"{o}dinger equation
$$
-\varepsilon^2\Delta u + Vu = |u|^{p - 2}u,
\eqno(NLS)
$$
has been extensively explored in the semiclasical scope and a valuable amount of work has been done, exhibiting that existence and concentration phenomena of single-peak and multiple-peak solutions occur at critical points or minimums of the electric potential $V$ when $\varepsilon\to 0$, see e.g.
\cite{{5},{6},{7},{8},{9},{10},{11},{12},{13}}. The concentrating phenomena occurs at higher dimensional sets has been  handled by  Ambrosetti et al. in \cite{14}. Then Del Pino, Kowalczyk and Wei in \cite{15} have completely settled the conjecture proposed by Ambrosetti et al in \cite{14}, which says  that, the concentration may  happen at a closed hypersurface. Recently, the existence of concentration driven by an external magnetic field has been invested by Bonheure et al. in \cite{16}. When $\varepsilon$ is fixed and $V$ is positive, the existence of infinitely many positive solutions to equation $(NLS)$  has also been settled down, see Wei and Yan's work \cite{17} under the assumption that  $V$ is radial symmetric, and see the results of Cerami et al \cite{18} if  $V$ does not satisfy any symmetric condition.

When $0<s<1$, let $\varepsilon = 1$ and $V \equiv \lambda > 0$, equation \eqref{eq1.1} is
\begin{equation}\label{eq1.2}
  (-\Delta)^s u + \lambda u = |u|^{p - 2}u, \,\,x\in\,\,\mathbb{R}^N.
\end{equation}
The existence of the least energy solution to \eqref{eq1.2} has been proved in [21, 22] by using the concentration-compactness arguments. The symmetry and optimal decay estimate for positive solutions of \eqref{eq1.2} have been showed by using the moving-plane method in \cite{23}.
The essential observation about nondegeneracy of ground states was obtained by  Frank and  Lenzmann in \cite{19} for the case $N=1$, and  Frank,  Lenzmann and Silvester  \cite{20} for  high dimensional case.

For the nonlocal case $0<s<1$, to our best knowledge, there are few results on the concentration phenomena of solutions of \eqref{eq1.1} in the semiclassical case, i.e., $\varepsilon \to 0$.
Concerning the non-vanishing case, i.e., $\inf_{x\in \mathbb{R}^N}V(x) > 0$, Alves et al. used the penalized method developed by Del Pino et al  in \cite{10} and extension method developed by Caffarelli et al. in \cite{24} to construct a family of positive solutions for \eqref{eq1.1}, which concentrate at the global minimum of $V$ when $\varepsilon\to 0$, to see more details, we refer to \cite{100}. We also refer to \cite{{25},{26},{27}}, which considered the concentration phenomena occuring at the nondegenerate critical points of $V$ by using a reduction method which is based on the nondegeneracy of the  ground states proved by  Frank and  Lenzmann in \cite{19} and Frank,  Lenzmann and Silvester  in \cite{20}.

In this paper, we consider the existence and asymptotic behavior of  semiclassical solutions to the nonlocal Schr\"{o}dinger equations~\eqref{eq1.1}  with vanishing potential.  As far as we know, for \eqref{eq1.1} with  vanishing potential, it seems that  there are no concentration results. The main difficulties lie in  the following two aspects: firstly, comparing with the case $s=1$,  we need the global $L^2$-norm estimate on the solutions when we use the penalized argument,  we can  see Proposition \ref{pr3.3} later; secondly, the vanishing of $V$ needs us  to construct penalized functions more explicit than the non-vanishing case. This step is much harder than the local case $s = 1$. In fact, as one can see in Section 4, the nonlocal effect makes the estimate on the fractional Laplacian $(-\Delta)^sf$ more tricky than that of  $-\Delta f$.

Here, we should  mention the penalized idea proposed by Schaftingen et al. in \cite{28}, dealing with the vanishing $V$ too. They handled with the following classical Schr\"{o}dinger equation with Choquard term
$$
-\varepsilon^2\Delta u + Vu = \varepsilon^{\alpha}(I_{\alpha}\ast |u|^p)|u|^{p - 2}u,\,\,x\in \mathbb{R}^N.
$$
The  idea there of penalizing  the nonlinearity   inspired us to deal with \eqref{eq1.1}. However, as we can see later, we should construct a different penalized functional  for the  fractional Laplacian case.

For  $s\in(0,1)$, the fractional Sobolev space $H^s(\mathbb{R}^N)$ is defined by
$$
H^s(\mathbb{R}^N) = \Big\{u\in L^2(\mathbb{R}^N):\frac{u(x) - u(y)}{|x - y|^{N/2 + s}}\in L^2(\mathbb{R}^N\times\mathbb{R}^N)\Big\},
$$
endowed with the norm
$$
\|u\|_{H^s(\mathbb{R}^N)} = \Big(\int_{\mathbb{R}^N}|(-\Delta)^{s/2}u|^2 + u^2dx \Big)^{\frac{1}{2}},
$$
where
$$
\int_{\mathbb{R}^N}|(-\Delta)^{s/2}u|^2dx = \int_{\mathbb{R}^{2N}}\frac{|u(x) - u(y)|^2}{|x - y|^{N + 2s}}dxdy.
$$
Here we also  give the definition  of the fractional Laplacian which will be used later (see \cite{4} for example):
$$
(-\Delta)^s u = \int_{\mathbb{R}^N}\frac{u(x) - u(y)}{|x - y|^{N + 2s}}dy\ \ \ \text{for all}\ u\in H^s(\mathbb{R}^N).
$$
Our work is based on the following weighted Hilbert space:
$$
H^s_{V,\varepsilon}(\mathbb{R}^N) = \left\{(-\Delta)^{s/2}u\in L^2(\mathbb{R}^N):\,\,u\in L^2(\mathbb{R}^N,V(x)dx)\right\},
$$
endowed with the norm
$$
\|u\|_{H^s_{V,\varepsilon}(\mathbb{R}^N)} = \Big(\int_{\mathbb{R}^N}\varepsilon^{2s}|(-\Delta)^{s/2}u|^2 + Vu^2dx \Big)^{\frac{1}{2}}.
$$

In the sequel, we set
$$
2^*_s = \left\{
          \begin{array}{ll}
            \frac{2N}{N - 2s}, & \text{for}\ N > 2s, \vspace{2mm}\\
            +\infty, & \text{for}\ N \leq 2s.
          \end{array}
        \right.
$$
We assume that $V\in C(\mathbb{R}^N,[0,\infty))$ satisfies the following assumptions

$(V_1)$  There exists an open bounded set $\Lambda$ with  smooth boundary $\partial\Lambda$,  such that
\begin{equation*}
\inf_{\Lambda}V < \inf_{\partial\Lambda}V.
\end{equation*}
Without loss of generality, we assume that $0\in \Lambda$.

$(V_2)$ $\liminf_{|x|\to\infty}V(x)|x|^{2s} > 0$.
\vspace{1cm}

\noindent Our main result can be stated as follows.

\begin{theorem}\label{th1.1}
Let $N > 2s$, $s\in(0,1)$,  $p\in (2 + 2s/(N - 2s), 2^*_s)$ and $V\in C(\mathbb{R}^N;[0,\infty))$ satisfy $(V_1)$ and $(V_2)$. Then problem \eqref{eq1.1} has a  positive solution $u_{\varepsilon}\in H^s_{V,\varepsilon}(\mathbb{R}^N)$ if $\varepsilon>0$ is small enough. Moreover, suppose $u_\varepsilon(x_\varepsilon) =\max\limits_{x\in\mathbb{R}^N} u_\varepsilon (x)$, then for each  $\rho > 0$,
\begin{align*}
  &{\lim\limits_{\varepsilon\to 0}V(x_{\varepsilon})} = \inf_{\Lambda}V,\\
  &{\liminf_{\varepsilon\to 0}dist(x_{\varepsilon},\mathbb{R}^N\backslash \Lambda)} > 0,\\
  &{\liminf_{\varepsilon\to 0}\|u_{\varepsilon}\|_{L^{\infty}(B_{\varepsilon \rho}(x_{\varepsilon}))}} > 0,\\
  &{\lim\limits_{{R\to \infty}\atop{\varepsilon \to 0}}\|u_{\varepsilon}\|_{L^{\infty}(\mathbb{R}^N\backslash B_{\varepsilon R}(x_{\varepsilon}))}} = 0.
\end{align*}
\end{theorem}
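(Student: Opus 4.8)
The plan is to follow the by-now classical Del Pino--Felmer penalization scheme, adapted to the nonlocal setting and to the vanishing potential. First I would rescale by $x \mapsto \varepsilon x$ so that \eqref{eq1.1} becomes $(-\Delta)^s u + V_\varepsilon u = |u|^{p-2}u$ with $V_\varepsilon(x) = V(\varepsilon x)$, working in the Hilbert space $H^s_{V_\varepsilon}$ with norm $\int |(-\Delta)^{s/2}u|^2 + V_\varepsilon u^2$. The crucial point forced by $(V_2)$ is a Hardy-type inequality: since $\liminf_{|x|\to\infty}V(x)|x|^{2s}>0$, one controls $\int V_\varepsilon^{-1}|g|^2$-type weighted norms (equivalently, a dual/weighted Sobolev embedding), which is what allows the penalized nonlinearity to live in the dual space; this replaces the coercivity one gets for free when $\inf V>0$.

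Next I would set up the penalization: fix a large constant, let $g_\varepsilon(x,t)$ agree with $|t|^{p-2}t$ for $x$ in (a neighbourhood of) $\Lambda/\varepsilon$ and be truncated to a subcritical-but-controlled term (bounded by $\tfrac{V_\varepsilon}{k}\,|t|$ plus a small power) outside, chosen so that the penalized functional $J_\varepsilon(u) = \tfrac12\|u\|^2 - \int G_\varepsilon(x,u)$ satisfies the mountain-pass geometry and the Palais--Smale (or Cerami) condition on $H^s_{V_\varepsilon}$. One obtains a mountain-pass critical point $u_\varepsilon$ of $J_\varepsilon$, and a comparison of its energy with the limiting autonomous problem (the ground-state level $c(\inf_\Lambda V)$ of \eqref{eq1.2}) gives a uniform energy bound. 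Then I would prove the uniform $L^\infty$ bound and decay via a De Giorgi--Moser iteration adapted to $(-\Delta)^s$ (or via the Caffarelli--Silvestre extension and the associated Caccioppoli/Moser estimates), together with the global $L^2$-norm control flagged in the introduction as Proposition~\ref{pr3.3}; this $L^2$ estimate is what feeds back into comparing the penalized term with the genuine nonlinearity.

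The heart of the argument, and the step I expect to be the main obstacle, is to show that for $\varepsilon$ small the solution $u_\varepsilon$ of the penalized problem actually solves the original equation, i.e. that $u_\varepsilon$ stays below the penalization threshold outside $\Lambda/\varepsilon$, so that $g_\varepsilon(x,u_\varepsilon) = |u_\varepsilon|^{p-2}u_\varepsilon$ everywhere. In the local case one builds an explicit supersolution (a barrier) on $\mathbb{R}^N \setminus \Lambda/\varepsilon$ and applies the maximum principle; here, as the authors note, constructing such a comparison function is genuinely harder because $(-\Delta)^s f$ is nonlocal, so evaluating $(-\Delta)^s$ of a candidate barrier (built from powers $|x|^{-\alpha}$ tuned to the decay rate dictated by $(V_2)$) requires delicate estimates — this is presumably the content of Section~4. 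Concentration then follows: one locates $x_\varepsilon$ as a maximum point, uses the uniform bounds to extract a nontrivial limit profile solving the autonomous equation at level $\inf_\Lambda V$, which forces $V(\varepsilon x_\varepsilon) \to \inf_\Lambda V$ and $\mathrm{dist}(x_\varepsilon, \mathbb{R}^N\setminus(\Lambda/\varepsilon))$ to stay bounded away from $0$; the uniform exponential/polynomial decay of $u_\varepsilon$ away from $x_\varepsilon$ gives the last two limits, and positivity follows from the strong maximum principle for $(-\Delta)^s$ after noting $|u_\varepsilon|$ is also a solution. Undoing the rescaling $x\mapsto x/\varepsilon$ yields the statement of Theorem~\ref{th1.1}.
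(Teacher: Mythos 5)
Your outline follows the same Del Pino--Felmer-type penalization strategy as the paper (penalized mountain-pass solution, energy comparison with the limiting autonomous problem, concentration at a maximum point, then a barrier/comparison argument to remove the penalization), but as a proof it stops short exactly where the theorem's difficulty lies. The step you yourself flag as ``the main obstacle'' --- showing $u_\varepsilon$ stays below the penalization threshold outside $\Lambda$ --- is not carried out, and it is precisely here that the hypotheses $p>2+\frac{2s}{N-2s}$ and $(V_2)$ are used quantitatively: the paper builds the barrier from $w_\mu\sim|x|^{-\mu}$ with $\mu<N-2s$ close to $N-2s$ (exploiting that $|x|^{-(N-2s)}$ is the fundamental solution of $(-\Delta)^s$, so that $(-\Delta)^s w_\mu\gtrsim w_\mu/|x|^{2s}$ at infinity, which $(V_2)$ can absorb), and then \emph{chooses} the penalization $P_\varepsilon(x)=\varepsilon^\sigma|x|^{-t}\chi_{\mathbb{R}^N\setminus\Lambda}$ with $t\in(2s,\mu(p-2))$ and $\sigma\in(0,2s(p-2))$; the window $t\in(2s,\mu(p-2))$ is nonempty only because $p>2+\frac{2s}{N-2s}$, and $t>2s$ is what makes the compact embedding $(P_1)$ hold. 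Your proposal instead suggests the classical truncation at level $\frac{V_\varepsilon}{k}|t|$; with $V$ allowed to decay like $|x|^{-2s}$ this choice forfeits the compactness property the paper gets from $(P_1)$ (the embedding $H^s_{V,\varepsilon}\hookrightarrow L^2(V\,dx)$ is not compact), so the Palais--Smale verification and, more importantly, the final comparison $u_\varepsilon^{p-2}\le P_\varepsilon$ would have to be reworked from scratch; you give no construction of the supersolution, which for $(-\Delta)^s$ must be global and is the technical heart of Section~4.

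A second genuine gap is the lower-bound/energy-splitting argument (Proposition~\ref{pr3.3}) that your sketch compresses into ``extract a nontrivial limit profile.'' For the nonlocal operator one must (i) rule out concentration on $\partial\Lambda$ via a Liouville theorem for $(-\Delta)^s v+\lambda v=\chi_H v^{p-1}$ on half-spaces (Lemma~\ref{le3.4}), and (ii) control the interaction between possible peaks created by the cut-off $\psi_{n,R}$, which produces the nonlocal terms $I_n$ in \eqref{teq}; bounding these requires the global weighted $L^2$ information $\int V(\varepsilon_n y+x_n^j)(v_n^j)^2<\infty$ combined with $(V_2)$ (see \eqref{eq3.7}--\eqref{eq3.8} and Remark~\ref{re3.5}), none of which appears in your plan. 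Finally, a small but real slip: positivity should not be argued by saying ``$|u_\varepsilon|$ is also a solution'' (that manipulation is not valid for $(-\Delta)^s$); the paper instead tests the penalized equation with $(u_\varepsilon)_-$ (the nonlinearity only involves $s_+$) and then evaluates the equation pointwise at a putative zero to get a contradiction from the nonlocal representation of $(-\Delta)^s$.
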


\begin{remark}\label{re1.2}
It seems that a natural  way to solve problem \eqref{eq1.1} is to use  the extension method developed by Caffarelli et al. in \cite{24}, by which, one can convert the nonlocal problem \eqref{eq1.1} into a local problem (see \cite{100} for example). But we do not take this method in our paper.  On one hand, if the problem \eqref{eq1.1} becomes a local problem, we believe that the difference between $0<s<1$ and $s = 1$ still makes it not easy to construct penalized function; On the other hand, we want to explore the essential properties behind the fractional Laplacian $(-\Delta)^s$. In fact, by our estimates on the lower bound of the penalized energy in Proposition \ref{pr3.3} and our construction of super-solutions and barrier functions in Section 4, we find that  all the difficulties are caused by the nonlocal operator $(-\Delta)^s$, which is a great contrast to the local operator $-\Delta$.
\end{remark}

\begin{remark}\label{re1.3}
To find solutions of \eqref{eq1.1}, it is natural to consider the following functional $I_{\varepsilon}:H^s_{V,\varepsilon}(\mathbb{R}^N)\to \mathbb{R}$:
$$
I_{\varepsilon}(u) = \frac{1}{2}\int_{\mathbb{R}^N}(\varepsilon^{2s}|(-\Delta)^{s/2}u|^2 + Vu^2)dx - \frac{1}{p}\int_{\mathbb{R}^N}|u|^pdx,
$$
whose critical points are solutions of equation \eqref{eq1.1}. However, the assumptions on $V$,  particularly the fact that $V$ can decay to zero at infinity, do not ensure that $I_{\varepsilon}$ is  well defined in $H^s_{V,\varepsilon}(\mathbb{R}^N)$. For example, let $V(x) = |x|^{-2ls/(l + 1)}$, $l > 0$, if $|x|\geq 1$. Take $u(x)\in C^{0,1}(\mathbb{R}^N)$ as follows
$$
u(x) = \left\{
  \begin{array}{ll}
    1, & x\in B_1(0),\vspace{2mm} \\
    \frac{1}{|x|^{\mu}} & x\in B^c_1(0),
  \end{array}
\right.
$$
with $\frac{N}{2}>\mu > \frac{N}{2} - \frac{ls}{l + 1}$, then $u\in H^s_{V,\varepsilon}(\mathbb{R}^N)$. But since $(N/2 - s)2^*_s = N$, we have
$$
\int_{\mathbb{R}^N}|u|^{p}dx = +\infty\ \text{for}\ l\ \text{large and}\ \mu \ \text{closed to}\ \frac{N}{2} - \frac{ls}{l + 1}.
$$
Moreover, even if we assume that $\inf_{\mathbb{R}^N}V > 0$, the functional would have a mountain-pass geometry in  $H^s_{V,\varepsilon}(\mathbb{R}^N)$, but the Palais-Smale condition could fail without further specific assumptions on $V$. To overcome this difficulty, we take the penalized ideas to cut off the nonlinearity, which was introduced  by Jean Van Schaftingen et al. in \cite{28}. We truncate the nonlinear term through a penalization outside the set $ \Lambda$ where the concentration is expected.

Generally speaking, the penalization argument is essentially a localization argument, while the operator $(-\Delta)^s$ is nonlocal. Hence we will face some new difficulties when we use the penalization argument to deal with nonlocal problems.  For the proof of concentration phenomena, it is necessary to separate the occurrences of two or more possible peaks of the penalized solutions $u_{\varepsilon}$, see conclusion of Proposition \ref{pr3.3}. But this step is difficult due to the interaction of those possible peaks since the nonlocal operator $(-\Delta)^s$ appears. To overcome  this difficulty, by employing  a skillful decomposition (see the simplification of equation \eqref{teq}), and then using the  assumption $(V_2)$ on $V$, we conclude that these interactions are small enough as $\varepsilon\to 0$, see \eqref{impotrant} in our proof of Proposition \ref{pr3.3}.

To prove that the solutions of penalized equation (Lemma \ref{le2.7}) are solutions of our origin problem \eqref{eq1.1}, a comparison principle for $(-\Delta)^s$ will be needed. Unlike local operator $-\Delta$, this kind of comparison principle requires the global information of the penalized solutions $u_{\varepsilon}$, see \eqref{eq4.12} in Section 4. Furthermore, as we can see in Proposition \ref{pr4.5}, due to the nonlocal effect of $(-\Delta)^s$, we have to construct supersolutions for the linearized penalized equation \eqref{eql4.2} globally. The construction is much more complicated than local case, since we can not see easily how the supersolutions would look like. For global information of $u_{\varepsilon}$, we refer to Appendix D in \cite{20}, where  $L^{\infty}$-norm and regularity assets are given. For construction of supersolutions, what puzzles us a lot is that we can not make the estimates  as precise  as those  of the local case $-\Delta$. We eventually solve this problem by the following essential observation: the solution for fractional equation  is always polynomial decay (see \cite{20} and so on). For more details we can see our proof of Propositions \ref{pr4.4} and \ref{pr4.5}.

\end{remark}

 We organize this paper as follows. In Section 2, we give a  variational framework and the penalized scheme and  show  the existence of solutions to the penalized problem via minimax-theorem. In Section 3, we discuss  the limiting equation corresponding to equation \eqref{eq1.1}, which plays a key role in excluding concentration occurring  at other points of $V$. In Section 4, we construct penalized function and the  corresponding barrier functions to show that the solution of penalized problem is  indeed  a solution of the original equation \eqref{eq1.1}. During our construction, the range $p\in (2 + \frac{2s}{N - 2s},2^*_s)$ and the decay assumption on $V$  are essential.

\vspace{1.2cm}

\section{The penalized problem}

\vspace{0.5cm}

\noindent The following inequality exposes the relationship between $H^s(\mathbb{R}^N)$ and the Banach space $L^q(\mathbb{R}^N)$.
\begin{proposition}\label{pr2.1} (Fractional version of the Gagliardo$-$Nirenberg inequality)
For every $u\in H^s(\mathbb{R}^N)$,
$$
\|u\|_{q} \leq C \|(-\Delta)^{s/2}u\|^{\beta}_{2}\|u\|^{1 - \beta}_{2},
$$
where $q\in [2,2^*_s]$ and $\beta$ satisfies $\frac{\beta}{2^*_s} + \frac{(1 - \beta)}{2} = \frac{1}{q}$.
\end{proposition}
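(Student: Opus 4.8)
The plan is to reduce the inequality to its endpoint case $q=2^*_s$ — the homogeneous fractional Sobolev inequality $\|u\|_{2^*_s}\le C\|(-\Delta)^{s/2}u\|_2$ — and then to recover the whole range $q\in[2,2^*_s]$ by H\"older interpolation between $L^2$ and $L^{2^*_s}$. Thus the argument splits into an \emph{analytic} part (the endpoint bound) and an \emph{elementary} part (the interpolation), and one should not expect any genuine difficulty.

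For the endpoint, take $u\in H^s(\mathbb{R}^N)$ and set $v=(-\Delta)^{s/2}u$, so that $v\in L^2(\mathbb{R}^N)$ and $\widehat v(\xi)=|\xi|^s\widehat u(\xi)$. Inverting, $u=I_sv$, the Riesz potential of order $s$ of $v$, i.e.\ convolution with the kernel $c_{N,s}|x|^{-(N-s)}$; for $u\in H^s$ this identity is legitimate (by density of Schwartz functions, or directly from Plancherel on the Fourier side). Since $N>2s$ we have $0<s<N/2$, and the exponents $2$ and $2^*_s$ satisfy $\tfrac1{2^*_s}=\tfrac12-\tfrac sN$, so the Hardy--Littlewood--Sobolev inequality applies and yields
\[
\|u\|_{2^*_s}=\|I_sv\|_{2^*_s}\le C\|v\|_2=C\|(-\Delta)^{s/2}u\|_2 .
\]
If one prefers not to invoke the Hardy--Littlewood--Sobolev inequality, this estimate — equivalently stated with the Gagliardo seminorm, which is comparable to $\|(-\Delta)^{s/2}u\|_2$, on the right-hand side — is classical and may simply be cited.

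For the interpolation, given $q\in[2,2^*_s]$ choose $\beta\in[0,1]$ so that $\tfrac1q=\tfrac{1-\beta}{2}+\tfrac{\beta}{2^*_s}$; this is exactly the relation in the statement, and solving gives $\beta=\tfrac{2^*_s(q-2)}{q(2^*_s-2)}$, which is well defined since $2^*_s>2$. Writing $|u|^q=|u|^{q(1-\beta)}\,|u|^{q\beta}$ and applying H\"older's inequality with the conjugate exponents $\tfrac{2}{q(1-\beta)}$ and $\tfrac{2^*_s}{q\beta}$ (their reciprocals sum to $1$ precisely by the choice of $\beta$) gives $\|u\|_q\le\|u\|_2^{1-\beta}\|u\|_{2^*_s}^{\beta}$. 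Combined with the endpoint bound this produces $\|u\|_q\le C^{\beta}\|(-\Delta)^{s/2}u\|_2^{\beta}\|u\|_2^{1-\beta}$, which is the assertion; the degenerate cases $\beta=0$ ($q=2$) and $\beta=1$ ($q=2^*_s$) reduce to the trivial identity and to the Sobolev inequality, respectively.

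The only substantive ingredient is therefore the homogeneous fractional Sobolev (Hardy--Littlewood--Sobolev) inequality, and no real obstacle arises. As a self-contained alternative that avoids it, one may argue by scaling: start from the continuous embedding $H^s(\mathbb{R}^N)\hookrightarrow L^q(\mathbb{R}^N)$ for $q\in[2,2^*_s]$, apply it to $u_\lambda(x):=u(\lambda x)$, use the scalings $\|(-\Delta)^{s/2}u_\lambda\|_2^2=\lambda^{2s-N}\|(-\Delta)^{s/2}u\|_2^2$, $\|u_\lambda\|_2^2=\lambda^{-N}\|u\|_2^2$ and $\|u_\lambda\|_q=\lambda^{-N/q}\|u\|_q$, and minimize the resulting upper bound over $\lambda>0$; the optimal $\lambda$ reproduces exactly the exponent $\beta$ found above and delivers the homogeneous inequality directly.
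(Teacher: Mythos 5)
Your proposal is correct. Note that the paper itself gives no proof of Proposition \ref{pr2.1}: it is quoted as a standard result on fractional Sobolev spaces (cf.\ the Hitchhiker's guide reference used for Remark \ref{re2.2}), so there is no argument of the authors to compare against. Your derivation is the standard one and is complete: the endpoint case $q=2^*_s$ follows from writing $u=I_s\big((-\Delta)^{s/2}u\big)$ and applying Hardy--Littlewood--Sobolev with $\tfrac1{2^*_s}=\tfrac12-\tfrac sN$ (legitimate since $N>2s$), and the full range $q\in[2,2^*_s]$ then follows from H\"older interpolation with exponents $\tfrac{2}{q(1-\beta)}$ and $\tfrac{2^*_s}{q\beta}$, whose reciprocals sum to $1$ exactly by the stated relation defining $\beta$; the exponent bookkeeping checks out, including the degenerate cases $\beta=0,1$. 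Your alternative scaling argument (apply the additive embedding $H^s\hookrightarrow L^q$ to $u(\lambda\cdot)$ and optimize in $\lambda$) is also valid and recovers the same $\beta=\tfrac Ns\bigl(\tfrac12-\tfrac1q\bigr)$; just be aware it is only ``self-contained'' relative to the embedding theorem, which must itself be taken from the literature or proved directly, and that the identification of $\|(-\Delta)^{s/2}u\|_2$ with the Gagliardo seminorm used in this paper holds up to a dimensional constant, which is harmless here.
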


\begin{remark}\label{re2.2}
The above inequality implies that $ H^s(\mathbb{R}^N)$ is continuously  embedded into $L^q(\mathbb{R}^N)$ for $\ q\in [2,2^*_s]$. Moreover, on bounded set, the embedding is compact ( see \cite{4}), i.e.,
$$
H^s(\mathbb{R}^N)\subset\subset L^q_{loc}(\mathbb{R}^N)\ \text{compactly, if}\ q\in [1,2^*_s).
$$
\end{remark}

\subsection{Definition of the penalized functional}

We choose a family of penalization potentials $P_{\varepsilon}\in L^{\infty}(\mathbb{R}^N,[0,\infty))$ for $\varepsilon > 0$ small in such a way that
\begin{equation}\label{2.1}
P_{\varepsilon}(x) = 0\ \text{for all}\ x\in\Lambda\ \text{and} \lim\limits_{\varepsilon \to 0}\sup_{\mathbb{R}^N\backslash \Lambda} P_{\varepsilon} = 0.
\end{equation}
We will give the explicit form of $P_{\varepsilon}$ in Sect. 4. Before that, we only rely on the following two assumptions on $P_{\varepsilon}$:

$(P_1)$ the space $H^s_{V,\varepsilon}(\mathbb{R}^N)\subset\subset L^2(\mathbb{R}^N,(P_{\varepsilon}(x) + \chi_{\Lambda}(x))dx)$ is compact,

$(P_2)$ there exists $0 < \tau < 1$ such that for  $\varphi\in H^s_{V,\varepsilon}(\mathbb{R}^N)$,
$$
\int_{\mathbb{R}^N}P_{\varepsilon}(x)|\varphi|^2 dx \leq \tau \int_{\mathbb{R}^N}\varepsilon^{2s}|(-\Delta)^{s/2} \varphi|^2 + V|\varphi|^2.
$$

    Given a penalization potential $P_{\varepsilon}$ which satisfies  $(P_1)$ and $(P_2)$,
we define the penalized nonlinearity $g_{\varepsilon}:\mathbb{R}^N\times \mathbb{R}\to \mathbb{R}$  by
$$
g_{\varepsilon}(x,s): = \chi_{\Lambda}(x)s^{p - 1}_{+} + \chi_{\mathbb{R}^N\backslash \Lambda}(x)\min(s^{p - 1}_{+}, P_{\varepsilon}(x)s_+).
$$
We also denote $G_{\varepsilon}(x,t) = \int_{0}^{t}g_{\varepsilon}(x,s)ds$.

Accordingly, we define the penalized superposition operators $\mathfrak{g}_{\varepsilon}$ and $\mathfrak{G}_{\varepsilon}$  by
$$
\mathfrak{g}_{\varepsilon}(u)(x) = g_{\varepsilon}(x,u(x))\ \text{and}\ \mathfrak{G}_{\varepsilon}(u)(x) = G_{\varepsilon}(x,u(x)),
$$
and the penalized functional $J_{\varepsilon}:H^s_{V,\varepsilon}(\mathbb{R}^N)\to \mathbb{R}$ by
$$
J_{\varepsilon}(u) = \frac{1}{2}\int_{\mathbb{R}^N}(\varepsilon^{2s}|(-\Delta)^{s/2} u|^2 + V(x)|u|^2) - \frac{1}{p}\int_{\mathbb{R}^N}\mathfrak{G}_{\varepsilon}(u)dx.
$$

\vspace{0.5cm}

\begin{remark}\label{re2.4}
Under assumption $(P_1)$, it is easy to check that the penalized functional $J_{\varepsilon}$ is well-defined. Moreover, $J_{\varepsilon}$ satisfies the Palais-Smale condition.
\end{remark}

%
%
%

\begin{lemma}\label{le2.5}

(1) If $2 < p< 2^*_s$ and the assumption $(P_1)$ holds, then $J_{\varepsilon}\in C^1(H^s_{V,\varepsilon}(\mathbb{R}^N), \mathbb{R})$ and for $u\in H^s_{V,\varepsilon}(\mathbb{R}^N)$, $\varphi\in H^s_{V,\varepsilon}(\mathbb{R}^N)$,
$$
\langle J_{\varepsilon}'(u),\varphi\rangle = \int_{\mathbb{R}^N}\varepsilon^{2s}(-\Delta)^{s/2}u(-\Delta)^{s/2}\varphi + Vu\varphi - \int_{\mathbb{R}^N}\mathfrak{g}_{\varepsilon}(u)\varphi dx.
$$

Here $\langle\cdot,\cdot\rangle$ denotes the duality product between the dual space $H^s_{V,\varepsilon}(\mathbb{R}^N)'$ and the space $H^s_{V,\varepsilon}(\mathbb{R}^N)$. In particular, $u\in H^s_{V,\varepsilon}(\mathbb{R}^N)$ is a critical point of $J_{\varepsilon}$ if and only if $u$ is a weak solution of the penalized equation
$$
\varepsilon^{2s}(-\Delta)^s u + Vu =\mathfrak{g}_{\varepsilon}(u).
\eqno(\mathcal{Q}_{\varepsilon})
$$

(2) (PS condition) If $2 < p <2^*_s$  and the assumptions $(P_1)$ and $(P_2)$ hold, then $J_{\varepsilon}$ owns the mountain pass geometry and satisfies the Palais-Smale condition.
\end{lemma}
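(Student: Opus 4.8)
The plan is to write $J_\varepsilon=\tfrac12\|\cdot\|_{H^s_{V,\varepsilon}}^2-\tfrac1p\Psi_\varepsilon$ with $\Psi_\varepsilon(u)=\int_{\mathbb R^N}\mathfrak G_\varepsilon(u)\,dx$, so that the quadratic term is manifestly $C^1$ with derivative the bilinear form $\langle u,\varphi\rangle_{H^s_{V,\varepsilon}}=\int_{\mathbb R^N}\varepsilon^{2s}(-\Delta)^{s/2}u(-\Delta)^{s/2}\varphi+Vu\varphi$, and everything reduces to $\Psi_\varepsilon\in C^1$ with $\langle\Psi_\varepsilon'(u),\varphi\rangle=\int\mathfrak g_\varepsilon(u)\varphi$. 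First I would record the pointwise bounds read off from the definition of $g_\varepsilon$: $0\le g_\varepsilon(x,t)\le \chi_\Lambda(x)t_+^{p-1}+P_\varepsilon(x)t_+$ and $0\le G_\varepsilon(x,t)\le \tfrac1p\chi_\Lambda(x)t_+^p+\tfrac12 P_\varepsilon(x)t_+^2$. Since $\|(-\Delta)^{s/2}u\|_2\le\varepsilon^{-s}\|u\|_{H^s_{V,\varepsilon}}$ and $(P_1)$ gives $u\in L^2(\Lambda)$ with norm controlled by $\|u\|_{H^s_{V,\varepsilon}}$, one has $u\in H^s(\Lambda)$, hence (as $\Lambda$ is bounded with smooth boundary and $p\le 2^*_s$) the fractional Sobolev embedding yields $\|u\|_{L^p(\Lambda)}\le C_\varepsilon\|u\|_{H^s_{V,\varepsilon}}$; together with $\|u\|_{L^2(P_\varepsilon dx)}\le C\|u\|_{H^s_{V,\varepsilon}}$ from $(P_1)$, this makes $\Psi_\varepsilon$ finite on all of $H^s_{V,\varepsilon}(\mathbb R^N)$. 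Gâteaux differentiability with the claimed formula follows by differentiating $t\mapsto\Psi_\varepsilon(u+t\varphi)$ under the integral sign (dominated convergence, the dominating function supplied by the growth bounds and Hölder in $L^p(\Lambda)\times L^{p'}(\Lambda)$ and in $L^2(P_\varepsilon dx)$); continuity of $u\mapsto\Psi_\varepsilon'(u)$ is the standard continuity of the superposition operators $u\mapsto u_+^{p-1}:L^p(\Lambda)\to L^{p'}(\Lambda)$ and $u\mapsto\mathfrak g_\varepsilon(u):L^2(P_\varepsilon dx)\to L^2(P_\varepsilon dx)$ (using $|g_\varepsilon(x,t)|\le P_\varepsilon(x)t_+$), combined with a subsequence-and-a.e.\ argument. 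Once $J_\varepsilon\in C^1$ with the stated derivative, the equivalence ``critical point of $J_\varepsilon$ $\Leftrightarrow$ weak solution of $(\mathcal Q_\varepsilon)$'' is immediate by testing against all $\varphi\in H^s_{V,\varepsilon}(\mathbb R^N)$.

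For part (2), I would first check the mountain-pass geometry. Using $G_\varepsilon(x,t)\le\tfrac1p\chi_\Lambda t_+^p+\tfrac12 P_\varepsilon t_+^2$, then $(P_2)$ and $\|u\|_{L^p(\Lambda)}\le C_\varepsilon\|u\|_{H^s_{V,\varepsilon}}$, one obtains $J_\varepsilon(u)\ge\tfrac12(1-\tfrac\tau p)\|u\|_{H^s_{V,\varepsilon}}^2-\tfrac{C_\varepsilon}{p}\|u\|_{H^s_{V,\varepsilon}}^p$; since $\tau<1<p$ and $p>2$ this is $\ge\alpha>0$ on a small sphere $\|u\|_{H^s_{V,\varepsilon}}=r$. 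For the ``mountain beyond'', pick any $v\in C_c^\infty(\Lambda)$ with $v\ge0$, $v\not\equiv0$: on $\operatorname{supp}v$ one has $G_\varepsilon(x,tv)=\tfrac1p(tv)^p$, hence $J_\varepsilon(tv)=\tfrac{t^2}{2}\|v\|_{H^s_{V,\varepsilon}}^2-\tfrac{t^p}{p^2}\int v^p\to-\infty$ as $t\to\infty$, and we take $e=t_0v$ with $t_0$ large.

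Next, the Palais--Smale condition. Let $(u_n)\subset H^s_{V,\varepsilon}(\mathbb R^N)$ with $J_\varepsilon(u_n)\to c$ and $J_\varepsilon'(u_n)\to0$. Boundedness of $(u_n)$ rests on the elementary inequality $\tfrac1p g_\varepsilon(x,t)t-G_\varepsilon(x,t)\ge-(\tfrac12-\tfrac1p)P_\varepsilon(x)t_+^2$ for $x\in\mathbb R^N\setminus\Lambda$ (with equality $0$ for $x\in\Lambda$), which one verifies by splitting at the crossover $t^\ast(x)=P_\varepsilon(x)^{1/(p-2)}$: below $t^\ast$ the expression vanishes, above it it equals $-(\tfrac12-\tfrac1p)P_\varepsilon(t^2-(t^\ast)^2)$ after using $(t^\ast)^p=P_\varepsilon(t^\ast)^2$. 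Combining this with $(P_2)$ gives $c+o(1)+o(1)\|u_n\|_{H^s_{V,\varepsilon}}\ge J_\varepsilon(u_n)-\tfrac1p\langle J_\varepsilon'(u_n),u_n\rangle\ge(\tfrac12-\tfrac1p)(1-\tau)\|u_n\|_{H^s_{V,\varepsilon}}^2$, so $\sup_n\|u_n\|_{H^s_{V,\varepsilon}}<\infty$. Passing to a subsequence, $u_n\wto u$ in $H^s_{V,\varepsilon}(\mathbb R^N)$; by $(P_1)$, $u_n\to u$ in $L^2((P_\varepsilon+\chi_\Lambda)dx)$, and interpolating the $L^2(\Lambda)$-convergence against the uniform $L^{2^*_s}(\Lambda)$-bound yields $u_n\to u$ in $L^p(\Lambda)$, while also $u_n\to u$ in $L^2(P_\varepsilon dx)$. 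The growth bounds on $g_\varepsilon$ (split over $\Lambda$ via Hölder in $L^p\times L^{p'}$, and over $\mathbb R^N\setminus\Lambda$ via Cauchy--Schwarz in $L^2(P_\varepsilon dx)$) then force $\int_{\mathbb R^N}\mathfrak g_\varepsilon(u_n)(u_n-u)\,dx\to0$; substituting into $\langle J_\varepsilon'(u_n),u_n-u\rangle\to0$ isolates $\langle u_n,u_n-u\rangle_{H^s_{V,\varepsilon}}\to0$, and since $\langle u,u_n-u\rangle_{H^s_{V,\varepsilon}}\to0$ by weak convergence, $\|u_n-u\|_{H^s_{V,\varepsilon}}^2\to0$, which is the desired strong convergence.

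The main obstacle I anticipate is the $C^1$-regularity (and, in the same spirit, the boundedness of PS sequences): as the paper itself stresses in Remark~\ref{re1.3}, an element of $H^s_{V,\varepsilon}(\mathbb R^N)$ need not lie in $L^p(\mathbb R^N)$ globally, so no off-the-shelf $L^p$-based Nemytskii theory applies and one must treat the two genuinely different regimes of $g_\varepsilon$ separately---superlinear and subcritical inside $\Lambda$ (rescued by boundedness of $\Lambda$ and the fractional Sobolev embedding) versus at most linear with bounded coefficient outside $\Lambda$ (handled only through $(P_1)$ and $(P_2)$)---and then splice the estimates. Relatedly, $g_\varepsilon$ carries no global Ambrosetti--Rabinowitz structure, which is exactly why the tailored inequality $\tfrac1p g_\varepsilon(x,t)t-G_\varepsilon(x,t)\ge-(\tfrac12-\tfrac1p)P_\varepsilon(x)t_+^2$ together with the smallness condition $(P_2)$ is the genuine crux of the boundedness step.
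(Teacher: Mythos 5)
Your proposal is correct and follows essentially the same route as the paper: the same splitting of $J_\varepsilon$ into its quadratic part and the penalized nonlinear term, dominated convergence plus the growth bounds for the $C^1$ property, the $(P_2)$-based inequality $\tfrac1p g_\varepsilon(x,t)t-G_\varepsilon(x,t)\ge-(\tfrac12-\tfrac1p)P_\varepsilon(x)t_+^2$ for boundedness of Palais--Smale sequences, and the compact embedding $(P_1)$ together with local compactness on $\Lambda$ for the strong convergence; you merely spell out details the paper leaves implicit (the mountain-pass geometry, the finiteness of $\int_\Lambda|u|^p$ for $u\in H^s_{V,\varepsilon}(\mathbb{R}^N)$). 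The only blemish, inherited from the paper's own typo, is the factor $\tfrac1p$ in front of $\int\mathfrak{G}_\varepsilon(u)$ in the definition of $J_\varepsilon$, which is inconsistent with the stated derivative formula; your estimates (like the paper's) effectively use the version without that factor, and nothing of substance changes.
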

\begin{proof}
To prove $(1)$, we only need to prove that the nonlinear term
$$
\mathcal{N}_{\varepsilon}(u) = \int_{\mathbb{R}^N}\mathfrak{G}_{\varepsilon}(u)
$$
is $C^1$. Indeed, assumption $(P_1)$ implies that for  $|t|\leq 1$,
$$
|\mathfrak{G}_{\varepsilon}(u + t\varphi) - \mathfrak{G}_{\varepsilon}(u)|\leq C\Big((|u|^p + |\varphi|^p)\chi_{\Lambda} + P_{\varepsilon}(|u|^2 + |\varphi|^2)\Big)\in L^1(\mathbb{R}^N),
$$
then the first order Gateaux derivative exists by Dominated Convergence Theorems. On the other hand, let $(u_n),\ u\in H^s_{V,\varepsilon}(\mathbb{R}^N)$ satisfy  $u_n\to u\in H^s_{V,\varepsilon}(\mathbb{R}^N)$. By Remark \ref{re2.2}, assumption $(P_1)$, and Dominated Convergence Theorem, we deduce that  for $\varphi\in H^{s}_{V,\varepsilon}(\mathbb{\mathbb{R}^N})$ with $\|\varphi\|_{H^{s}_{V,\varepsilon}(\mathbb{R}^N} \leq 1$,
\begin{align*}
 &\quad\quad|\langle\mathcal{N}'_{\varepsilon}(u_n) - \mathcal{N}'_{\varepsilon}(u),\varphi\rangle|&\\
&\leq C\|u_n - u\|^p_{H^{s}_{V,\varepsilon}(\mathbb{R}^N)} + \int_{\mathbb{R}^N\backslash{\Lambda}}\big|\min\{P_{\varepsilon},(u_n)^{p - 2}_+\}u_n\varphi - \min\{P_{\varepsilon},u^{p - 2}_+\}u\varphi \big| dx\\
 & \leq  o_n(1) + \int_{\mathbb{R}^N\backslash{\Lambda}}\big|\min\{P_{\varepsilon},(u_n)^{p - 2}_+\}(u_n - u)\varphi\big|\\
 &\ \ \ \ \ \ \ \ \ \ \ \ \ \ \qquad + \big|\min\{P_{\varepsilon},(u_n)^{p - 2}_+\} - \min\{P_{\varepsilon},u^{p - 2}_+\}\big|u\varphi dx\\
 & \leq o_n(1) + \int_{\mathbb{R}^N\backslash{\Lambda}}\big|\min\{P_{\varepsilon},(u_n)^{p - 2}_+\} - \min\{P_{\varepsilon},u^{p - 2}_+\}\big||u|^2dx\\
 & = o_n(1).
\end{align*}

To prove $(2)$, it is easy to check the mountain pass geometry. Now we show that $J_{\varepsilon}$ satisfies the Palais-Smale condition, i.e., any sequence $(u_n)\in H^s_{V,\varepsilon}(\mathbb{R}^N)$ satisfying  $\sup_n J_{\varepsilon}(u_n) \leq C < \infty$ and  $\lim\limits_{n\to\infty} J'_{\varepsilon}(u_n)\to 0\ \text{in}\ (H^s_{V,\varepsilon}(\mathbb{R}^N))'$ must be relatively compact.

     Indeed, $(P_2)$ implies that
\begin{align*}
 &C + o_n(1)\|u_n\|_{H^s_{V,\varepsilon}(\mathbb{R}^N)}\\
 \geq & \Big(\frac{1}{2} - \frac{1}{p}\Big)\|u_n\|_{H^s_{V,\varepsilon}(\mathbb{R}^N)} + \Big(\frac{1}{p} - \frac{1}{2}\Big)\int_{\mathbb{R}^N}P_{\varepsilon}(x)(u_n)^2 dx\\
 \geq & \Big(\frac{1}{2} - \frac{1}{p}\Big)\|u_n\|_{H^s_{V,\varepsilon}(\mathbb{R}^N)}  - \tau\Big(\frac{1}{2} - \frac{1}{p}\Big)\|u_n\|_{H^s_{V,\varepsilon}(\mathbb{R}^N)}\geq C\|u_n\|^2_{H^s_{V,\varepsilon}(\mathbb{R}^N)}.
\end{align*}
Hence $(u_n)$ is bounded in $H^{s}_{V,\varepsilon}(\mathbb{R}^N)$. Then, up to a subsequence, we assume that $u_n\rightharpoonup u\in H^s_{V,\varepsilon}(\mathbb{R}^N)$ weakly. By assumptions $(P_1)$ and Remark \ref{re2.2}
\begin{align*}
  \|u_n - u\|^2_{H^s_{V,\varepsilon}(\mathbb{R}^N)}
 = &\langle J'_{\varepsilon}(u_n) - J'_{\varepsilon}(u),u_n - u \rangle+ \int_{\mathbb{R}^N}(\mathfrak{g}_{\varepsilon}(u_n) - \mathfrak{g}_{\varepsilon}(u))(u
_n(x) - u(x))dx\\
 \leq & o_n(1) + C\Big(\int_{\mathbb{R}^N\backslash \Lambda}P_{\varepsilon}(|u_n| + |u|)^2dx\Big)^\frac{1}{2}\Big(\int_{\mathbb{R}^N\backslash \Lambda}P_{\varepsilon}|u_n-u|^2dx\Big)^\frac{1}{2}\\
 =& o_n(1).
\end{align*}
As a result,  we complete  the proof.
\end{proof}

Define
$$
c_{\varepsilon}: = \inf_{\gamma\in \Gamma_{\varepsilon}}\max_{t\in [0,1]}J_{\varepsilon}(\gamma(t)) > 0,
$$
where
$$
\Gamma_{\varepsilon}:= \{\gamma\in C([0,1],H^s_{V,\varepsilon}(\mathbb{R}^N))|\gamma(0) = 0,\ J_{\varepsilon}(\gamma(1)) < 0\}.
$$

Now we have
\begin{lemma}\label{le2.7}
There exists $ u_{\varepsilon} \in H^s_{V,\varepsilon}(\mathbb{R}^N)\cap C^{1,\beta}(\mathbb{R}^N)$ for some $\beta\in (0,1)$ such that $J_{\varepsilon}(u_{\varepsilon}) = c_{\varepsilon}$,
 $J'_{\varepsilon}(u_{\varepsilon}) = 0$, and $u_{\varepsilon} > 0$.
\end{lemma}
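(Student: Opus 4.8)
The plan is to apply the mountain pass theorem to the penalized functional $J_\varepsilon$, whose applicability is exactly what Lemma~\ref{le2.5} guarantees: part~(2) provides the mountain pass geometry together with the Palais--Smale condition, so the minimax value $c_\varepsilon$ is attained at some $u_\varepsilon\in H^s_{V,\varepsilon}(\mathbb{R}^N)$ with $J_\varepsilon'(u_\varepsilon)=0$, i.e.\ $u_\varepsilon$ is a weak solution of $(\mathcal{Q}_\varepsilon)$. The remaining work is to upgrade this weak solution to a classical positive one. First I would establish positivity: test $(\mathcal{Q}_\varepsilon)$ against the negative part $u_\varepsilon^- := \min(u_\varepsilon,0)$. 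Since $g_\varepsilon(x,s)=0$ for $s\le 0$ (the penalized nonlinearity only sees $s_+$), the right-hand side contributes nothing, while the quadratic form on the left, evaluated on $u_\varepsilon^-$, controls $\|u_\varepsilon^-\|_{H^s_{V,\varepsilon}}^2$ from below — using the standard fact that for the fractional Laplacian $\langle(-\Delta)^{s/2}u,(-\Delta)^{s/2}u^-\rangle \ge \|(-\Delta)^{s/2}u^-\|_2^2$. Hence $u_\varepsilon^-\equiv 0$, so $u_\varepsilon\ge 0$; then the strong maximum principle for $(-\Delta)^s$ (a nonnegative supersolution that is not identically zero is strictly positive) forces $u_\varepsilon>0$.

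Next I would address regularity. The nonlinearity $\mathfrak{g}_\varepsilon(u_\varepsilon)$ is, pointwise, bounded by $\min(u_\varepsilon^{p-1}, \|P_\varepsilon\|_\infty u_\varepsilon)$ outside $\Lambda$ and equals $u_\varepsilon^{p-1}$ inside $\Lambda$; in particular it is controlled by $C(1+u_\varepsilon^{p-1})$ with $p<2^*_s$, which is the classical subcritical growth. Standard fractional elliptic bootstrap then applies: starting from $u_\varepsilon\in H^s_{V,\varepsilon}\hookrightarrow L^{2^*_s}$, a De Giorgi--Nash--Moser / Brezis--Kato type iteration (as carried out, e.g., in the references cited in the paper, notably Appendix~D of \cite{20} and the regularity theory in \cite{4}) gives $u_\varepsilon\in L^\infty(\mathbb{R}^N)$; one should also note $V u_\varepsilon \in L^\infty_{loc}$ is harmless since $V$ is continuous, and $\varepsilon$ is fixed here. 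Once $u_\varepsilon\in L^\infty$, Schauder-type estimates for $(-\Delta)^s$ (the nonlinearity $\mathfrak{g}_\varepsilon(u_\varepsilon)$ is Hölder continuous because $P_\varepsilon$ may be taken continuous and $s\mapsto \min(s_+^{p-1},P_\varepsilon s_+)$ is Lipschitz in $s$) upgrade $u_\varepsilon$ to $C^{1,\beta}_{loc}(\mathbb{R}^N)$ for some $\beta\in(0,1)$, and the uniform $L^\infty$ bound propagates this to a global $C^{1,\beta}(\mathbb{R}^N)$ bound.

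The main obstacle I anticipate is not any single step in isolation but the regularity argument: the penalized nonlinearity $\mathfrak{g}_\varepsilon$ is only Lipschitz, not smooth, so one cannot naively differentiate the equation, and one must be careful that the $L^\infty$ bound and the Hölder regularity of $x\mapsto g_\varepsilon(x,u_\varepsilon(x))$ survive the presence of the characteristic-function cutoff $\chi_\Lambda$ and the $x$-dependence of $P_\varepsilon$ across $\partial\Lambda$. The cleanest route is to observe that the \emph{product} $\mathfrak{g}_\varepsilon(u_\varepsilon)$ is in fact continuous across $\partial\Lambda$ — indeed on $\partial\Lambda$ we will have arranged (via the eventual choice of $P_\varepsilon$ in Section~4) that $P_\varepsilon$ is positive there, and one checks $g_\varepsilon$ matches from both sides only up to the $\min$, so a small amount of care reduces matters to the interior regularity theory, which is standard. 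Given the $L^\infty$ bound, the Hölder continuity of the right-hand side follows, and the $C^{1,\beta}$ conclusion is then a direct citation of fractional Schauder estimates. Finally, $J_\varepsilon(u_\varepsilon)=c_\varepsilon>0$ ensures $u_\varepsilon\not\equiv 0$, closing the argument.
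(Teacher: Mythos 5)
Your proposal follows essentially the same route as the paper: existence of a mountain-pass critical point at level $c_\varepsilon$ from Lemma~\ref{le2.5}, regularity delegated to Appendix~D of \cite{20}, nonnegativity by testing $(\mathcal{Q}_\varepsilon)$ with the negative part, and strict positivity by a nonlocal maximum-principle argument (the paper implements this by evaluating $(-\Delta)^s u_\varepsilon$ pointwise at a hypothetical zero, which is exactly your strong maximum principle). The only divergence is that you elaborate the bootstrap, where your claim that $\mathfrak{g}_\varepsilon(u_\varepsilon)$ is continuous across $\partial\Lambda$ is not justified (the $\min$ need not equal $u_\varepsilon^{p-1}$ there); but since an $L^\infty$ right-hand side suffices for the cited regularity theory, this does not affect the conclusion, and the paper itself handles this step by citation alone.
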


\begin{proof}
The existence of $u_{\varepsilon}$ follows from Lemma~\ref{le2.5}. The regularity  result follows from Appendix D in \cite{20}. Testing the penalized equation $(\mathcal{Q}_{\varepsilon})$ with $(u_{\varepsilon})_{-}$ and integrating, we can see that $u_{\varepsilon}\geq 0$.

Suppose to the contrary that there exists  $x_0\in \mathbb{R}^N$ such that $u_{\varepsilon}(x_0) = 0$, then we have
$$
0 = (-\Delta)^s{u_{\varepsilon}}(x_0) + V(x_0)u_{\varepsilon}(x_0) < 0,
$$
which is a contradiction. Therefore, $u_{\varepsilon}>0$.

\end{proof}

\section{Concentration of the penalized solutions}

\noindent In this section, we will compare the energy $c_\varepsilon$ obtained by Lemma \ref{le2.7} with that of the following limiting problem, which is useful to  get the concentrating phenomena of the  solutions obtained by Lemma \ref{le2.7}.

\subsection{The limiting problem}

\noindent For $\lambda > 0$, the limiting problem associated to  \eqref{eq1.1} is
$$
(-\Delta)^s v + \lambda v = |v|^{p - 2}v,
\eqno(\mathcal{P}_{\lambda})
$$
and the corresponding functional $J_{\lambda}: H^s(\mathbb{R}^N)\to \mathbb{R}$ defined  by
$$
J_{\lambda}(v) = \frac{1}{2}\int_{\mathbb{R}^N}(|(-\Delta)^{s/2} v|^2 + \lambda |v|^2)dx - \frac{1}{p}\int_{\mathbb{R}^N}|v|^p dx.
$$

As we introduced in the introduction, the limiting problem has a positive ground state. We define the limiting energy by
\begin{equation}\label{eq3.1}
\mathcal{E}(\lambda) = \inf_{v\in H^s(\mathbb{R}^N)\backslash\{0\}}\max_{t > 0}I_{\lambda}(tv).
\end{equation}
Since for every $v\in H^s(\mathbb{R}^N),\ J_{\lambda}(|v|) \leq J_{\lambda}(v)$, by the  density of $C_0^\infty(\mathbb{R}^N)$ in   $H^s(\mathbb{R}^N)$, we have
\begin{equation}\label{eq3.2}
  \mathcal{E}(\lambda) = \inf_{{C^{\infty}_c(\mathbb{R}^N)\backslash\{0\}}\atop{v\geq 0}}\max_{t > 0}J_{\lambda}(tv).
\end{equation}
The following proposition is obvious.
\begin{proposition}\label{pr3.1}
Let $\lambda > 0$ and $v\in H^s(\mathbb{R}^N)$. Define
$$
v_{\lambda}(y) = \lambda^{\frac{1}{p - 2}}v(\lambda^{1/2s}y).
$$
Then
$$
J_{\lambda}(v_{\lambda}) = \lambda^{\frac{p}{p - 2} - \frac{N}{2s}}J_1(v_1).
$$
In particular, $v$ is a solution of $(\mathcal{P}_1)$  if and only if $v_\lambda$ is a solution of $(\mathcal{P}_{\lambda})$ and
$$
\mathcal{E}({\lambda}) = \mathcal{E}({1})\lambda^{\frac{p}{p - 2} - \frac{N}{2s}}.
$$
Moreover, $\mathcal{E}({\lambda})$ is continuous and increasing in $(0,\infty)$.
\end{proposition}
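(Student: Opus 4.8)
The plan is to verify everything by a direct change of variables, since the statement is purely a scaling identity. First I would write $\mu:=\lambda^{1/2s}$, so that $v_\lambda(y)=\lambda^{1/(p-2)}v(\mu y)$ and $v_1=v$, and record the elementary scaling rules: for $w(y):=v(\mu y)$, the substitution $x=\mu y$, $x'=\mu z$ in the double integral defining the Gagliardo seminorm gives
$$
\int_{\mathbb{R}^N}|(-\Delta)^{s/2}w|^2\,dy=\mu^{2s-N}\int_{\mathbb{R}^N}|(-\Delta)^{s/2}v|^2\,dx ,
$$
while $\int_{\mathbb{R}^N}|w|^q\,dy=\mu^{-N}\int_{\mathbb{R}^N}|v|^q\,dx$ and, pointwise, $(-\Delta)^s w(y)=\mu^{2s}\big((-\Delta)^s v\big)(\mu y)$. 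Substituting $v_\lambda=\lambda^{1/(p-2)}w$ into the three terms of $J_\lambda$ and using $\mu^{2s}=\lambda$, $\mu^{-N}=\lambda^{-N/2s}$, the kinetic term acquires the factor $\lambda^{2/(p-2)}\lambda^{1-N/2s}$, the potential term $\lambda|v_\lambda|^2$ acquires $\lambda^{1+2/(p-2)}\lambda^{-N/2s}$, and the $L^p$ term acquires $\lambda^{p/(p-2)}\lambda^{-N/2s}$; these three exponents coincide because $2/(p-2)+1=p/(p-2)$, and they equal $p/(p-2)-N/2s$. Factoring this common power out yields $J_\lambda(v_\lambda)=\lambda^{p/(p-2)-N/2s}J_1(v_1)$.

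For the solution correspondence I would plug $v_\lambda$ directly into $(\mathcal{P}_\lambda)$. From $(-\Delta)^s v_\lambda(y)=\lambda^{1/(p-2)+1}\big((-\Delta)^s v\big)(\mu y)$ and, when $v$ solves $(\mathcal{P}_1)$, the substitution $(-\Delta)^s v=|v|^{p-2}v-v$, one finds that both sides of $(\mathcal{P}_\lambda)$ reduce to $\lambda^{1+1/(p-2)}|v(\mu y)|^{p-2}v(\mu y)=|v_\lambda|^{p-2}v_\lambda$, so $v_\lambda$ solves $(\mathcal{P}_\lambda)$; the converse is immediate since $v\mapsto v_\lambda$ is invertible, with inverse $w\mapsto \lambda^{-1/(p-2)}w(\lambda^{-1/2s}\cdot)$. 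For the energy formula, the key observation is that the scaling commutes with dilation by scalars, $(tv)_\lambda=t\,v_\lambda$, whence $\max_{t>0}J_\lambda(t v_\lambda)=\lambda^{p/(p-2)-N/2s}\max_{t>0}J_1(tv)$ for every fixed $v$; since $v\mapsto v_\lambda$ is a bijection of $H^s(\mathbb{R}^N)\setminus\{0\}$ onto itself, taking the infimum over $v$ in \eqref{eq3.1} gives $\mathcal{E}(\lambda)=\mathcal{E}(1)\lambda^{p/(p-2)-N/2s}$.

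Finally, $\mathcal{E}(1)>0$ is the standard positivity of the mountain-pass level for $J_1$ (from $p>2$ and the Sobolev embedding of Proposition \ref{pr2.1}), continuity of $\lambda\mapsto\mathcal{E}(\lambda)$ is immediate from the power-law formula, and monotonicity reduces to checking that the exponent is positive: $p/(p-2)>N/2s$ rearranges to $p(N-2s)<2N$, i.e. $p<2^*_s$, which holds by hypothesis. I do not anticipate any real obstacle; the only place demanding a moment's care is getting the fractional scaling exponents right in the Gagliardo double integral, after which every identity in the statement is forced.
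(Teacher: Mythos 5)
Your proof is correct, and since the paper simply declares this proposition ``obvious'' without proof, your direct change-of-variables verification (scaling of the Gagliardo seminorm, the $L^q$ norms, and the pointwise identity $(-\Delta)^s v(\mu\cdot)=\mu^{2s}((-\Delta)^s v)(\mu\cdot)$, plus the bijectivity of $v\mapsto v_\lambda$ and the positivity of the exponent $\frac{p}{p-2}-\frac{N}{2s}$ from $p<2^*_s$) is exactly the intended routine argument. No gaps.
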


Define the concentration function $\mathcal{C}:\ \mathbb{R}^N\to \mathbb {R}$ by
\begin{equation}\label{eq3.3}
 \mathcal{C}(x) = \mathcal{E}(V(x)).
\end{equation}

\vspace{1cm}

\noindent We begin our study of the asymptotic behavior of solutions by an upper bound on the mountain-pass level.

\begin{lemma}\label{le3.2}(Upper bound of the energy) we have
\begin{equation}\label{eq3.4}
 \limsup_{\varepsilon\to 0}\frac{c_{\varepsilon}}{\varepsilon^N}\leq \inf_{x\in\Lambda}\mathcal{C}(x).
\end{equation}
\end{lemma}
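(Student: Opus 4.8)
The plan is to construct an explicit test path in $\Gamma_\varepsilon$ out of a (nearly optimal) ground state of the limiting problem, rescaled to the $\varepsilon$-scale and centered at a point $x_0\in\Lambda$ where $\mathcal{C}$ is (nearly) minimal. Fix $\delta>0$ and pick $x_0\in\Lambda$ with $\mathcal{C}(x_0)\le \inf_\Lambda\mathcal{C}+\delta$, and set $\lambda_0:=V(x_0)$. By \eqref{eq3.2} choose $w\in C_c^\infty(\mathbb{R}^N)$, $w\ge 0$, with $\operatorname{supp}w\subset B_r(0)$ and $\max_{t>0}J_{\lambda_0}(tw)\le \mathcal{E}(\lambda_0)+\delta=\mathcal{C}(x_0)+\delta$. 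Now define the rescaled, recentered competitor
$$
w_\varepsilon(x):=w\!\left(\frac{x-x_0}{\varepsilon}\right),
$$
so that $\operatorname{supp}w_\varepsilon\subset B_{\varepsilon r}(x_0)\subset\Lambda$ once $\varepsilon$ is small enough. Inside $\Lambda$ the penalization is inactive, so $\mathfrak{G}_\varepsilon(w_\varepsilon)=|w_\varepsilon|^p/1$ pointwise there and $J_\varepsilon(tw_\varepsilon)$ involves only the "honest" nonlinearity. Using the change of variables $x=x_0+\varepsilon y$ together with the scaling identity $\int\varepsilon^{2s}|(-\Delta)^{s/2}w_\varepsilon|^2\,dx=\varepsilon^{N}\int|(-\Delta)^{s/2}w|^2\,dy$ and $\int V w_\varepsilon^2\,dx=\varepsilon^N\int V(x_0+\varepsilon y)w(y)^2\,dy$, one gets
$$
\varepsilon^{-N}J_\varepsilon(tw_\varepsilon)=\frac{t^2}{2}\int_{\mathbb{R}^N}\!\Big(|(-\Delta)^{s/2}w|^2+V(x_0+\varepsilon y)\,w^2\Big)dy-\frac{t^p}{p}\int_{\mathbb{R}^N}w^p\,dy,
$$
which converges uniformly for $t$ in compact subsets of $[0,\infty)$ to $\varepsilon^{-N}$ times the limiting functional value, namely to $J_{\lambda_0}(tw)$, because $V(x_0+\varepsilon y)\to V(x_0)=\lambda_0$ uniformly on the compact set $\operatorname{supp}w$ by continuity of $V$.

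Next I would turn this rescaled function into an admissible path. Since $p>2$, the map $t\mapsto J_{\lambda_0}(tw)$ is positive for small $t>0$, has a unique maximum at some $t_0>0$, and tends to $-\infty$ as $t\to\infty$; pick $T>t_0$ with $J_{\lambda_0}(Tw)<-1$. Define $\gamma_\varepsilon(\theta):=\theta\,T\,w_\varepsilon$ for $\theta\in[0,1]$. Then $\gamma_\varepsilon(0)=0$, and by the uniform convergence above $\varepsilon^{-N}J_\varepsilon(\gamma_\varepsilon(1))=\varepsilon^{-N}J_\varepsilon(Tw_\varepsilon)\to J_{\lambda_0}(Tw)<-1$, so $J_\varepsilon(\gamma_\varepsilon(1))<0$ for $\varepsilon$ small; hence $\gamma_\varepsilon\in\Gamma_\varepsilon$. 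Therefore
$$
\frac{c_\varepsilon}{\varepsilon^N}\le \max_{\theta\in[0,1]}\varepsilon^{-N}J_\varepsilon(\gamma_\varepsilon(\theta))=\max_{t\in[0,T]}\varepsilon^{-N}J_\varepsilon(tw_\varepsilon)\xrightarrow[\varepsilon\to0]{}\max_{t\in[0,T]}J_{\lambda_0}(tw)=\max_{t>0}J_{\lambda_0}(tw)\le \mathcal{C}(x_0)+\delta\le \inf_\Lambda\mathcal{C}+2\delta,
$$
where the interchange of $\limsup$ with the maximum over the compact interval $[0,T]$ is justified by the uniform convergence of $\varepsilon^{-N}J_\varepsilon(tw_\varepsilon)$ to $J_{\lambda_0}(tw)$ on $[0,T]$. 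Since $\delta>0$ is arbitrary, $\limsup_{\varepsilon\to0}c_\varepsilon/\varepsilon^N\le\inf_\Lambda\mathcal{C}$, which is \eqref{eq3.4}.

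The only genuinely delicate point is verifying that $w_\varepsilon\in H^s_{V,\varepsilon}(\mathbb{R}^N)$ and that the scaling identity for the Gagliardo seminorm holds cleanly: since $w\in C_c^\infty$, one has $w\in H^s(\mathbb{R}^N)$ and, because $w_\varepsilon$ is compactly supported in $\Lambda$ where $V$ is bounded, $\int V w_\varepsilon^2<\infty$, so membership in the weighted space is immediate; the homogeneity $[w_\varepsilon]_{H^s}^2=\varepsilon^{N-2s}[w]_{H^s}^2$ follows from the double change of variables $x=x_0+\varepsilon\xi$, $y=x_0+\varepsilon\eta$ in $\int\int |w_\varepsilon(x)-w_\varepsilon(y)|^2|x-y|^{-N-2s}\,dx\,dy$, and multiplying by $\varepsilon^{2s}$ gives the stated $\varepsilon^N$ factor. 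Beyond that, everything reduces to the elementary behavior of $t\mapsto J_{\lambda_0}(tw)$ and a uniform-convergence argument, so no further obstacle is expected.
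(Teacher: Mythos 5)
Your proposal is correct and follows essentially the same route as the paper: rescale a nonnegative $C_c^\infty$ function to $\varepsilon$-scale around a point of $\Lambda$ (where the penalization is inactive), use the path $\theta\mapsto\theta T w_\varepsilon$ as a mountain-pass competitor, pass to the limit via the scaling identities and continuity of $V$, and then optimize over the base point and test function using \eqref{eq3.2}. The only difference is cosmetic — you fix a nearly optimal $x_0$ and $w$ up front with a $\delta$, while the paper takes the infima over $a\in\Lambda$ and $v$ at the end — plus a harmless typo ($\mathfrak{G}_\varepsilon(w_\varepsilon)=w_\varepsilon^p/p$, not $/1$, inside $\Lambda$).
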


\begin{proof}
For any given $a \in \Lambda$ and nonnegative function $v \in C^{\infty}_c (\mathbb{R}^N)\backslash\{0\}$, we define
$$
v_{\varepsilon}(x) = v\Big(\frac{x - a}{\varepsilon}\Big).
$$
Obviously, $supp \,\,v \subset \Lambda$ when $\varepsilon$ is small enough. It is easy  to check that $\gamma_{\varepsilon}(t) = tT_0v_{\varepsilon}\in \Gamma_{\varepsilon}$ for some $T_0$ large enough. So
\begin{align*}
  \frac{c_{\varepsilon}}{\varepsilon^N}&\leq \max_{t\in [0,1]}J_{\varepsilon}(\gamma_{\varepsilon}(t))\leq \max_{t>0}I_{V(a)}(tv) + o_\varepsilon(1),
\end{align*}
and
$$
\limsup_{\varepsilon\to 0}\frac{c_{\varepsilon}}{\varepsilon^N}\leq \inf_{{C^{\infty}_c(\mathbb{R}^N)\backslash\{0\}}\atop{v\geq 0}}\max_{t > 0}I_{\lambda}(tv) \leq \mathcal{E}(V(a)) = \mathcal{C}(a),
$$
which implies our conclusion.
\end{proof}

\vspace{0.5cm}

In order to understand the behavior  of the solutions we will rely on a  lower bound on the energy.

We first establish  a Liouville type  theorem for fractional problems on a half-space, from which we can expand the penalized problem to a limiting equation defined on the hole space $\mathbb{R}^N$.

\begin{lemma}\label{le3.4}
Let $H \subset \mathbb{R}^N$ be a half-space. If $v \in H^s_{V,\varepsilon}(\mathbb{R}^N), \ v\geq 0$ satisfies the equation
$$
(-\Delta)^s v + \lambda v = \chi_{H}v^{p - 1}\ \text{in}\ \mathbb{R}^N,
$$
then $v = 0$.
\end{lemma}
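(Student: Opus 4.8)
The plan is to prove that a nonnegative solution of the fractional half-space problem must vanish identically, exploiting the genuinely nonlocal nature of $(-\Delta)^s$. Let $H$ be the half-space and suppose $v\geq 0$ solves $(-\Delta)^s v + \lambda v = \chi_H v^{p-1}$. The key observation is that on the complementary half-space $\mathbb{R}^N\setminus H$, the equation reduces to $(-\Delta)^s v + \lambda v = 0$, i.e. $v$ is a nonnegative subsolution-type object there. First I would test the equation on $\mathbb{R}^N\setminus H$ against $v$ itself (or use the pointwise form): for $x\notin H$,
$$
\lambda v(x) = -(-\Delta)^s v(x) = \int_{\mathbb{R}^N}\frac{v(y)-v(x)}{|x-y|^{N+2s}}\,dy.
$$
Since $\lambda>0$ and $v\geq 0$, evaluating at a point where $v$ attains a positive value on $\mathbb{R}^N\setminus\overline H$ — or more carefully, by a sliding/maximum-principle argument — forces a contradiction unless $v\equiv 0$ on $\mathbb{R}^N\setminus H$. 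Concretely, if $x_0\in \mathbb{R}^N\setminus H$ realizes the supremum of $v$ over that region and $v(x_0)>0$, then $\int \frac{v(y)-v(x_0)}{|x_0-y|^{N+2s}}dy$ has a nonpositive contribution from $y\notin H$ and, because $v$ could be large inside $H$, one cannot immediately sign it; so instead I would argue via the weak formulation with test function $v\,\chi_{\mathbb{R}^N\setminus H}$ combined with the nonnegativity of the Gagliardo bilinear form.

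The cleaner route, and the one I would carry out in detail, is an energy/integral identity. Multiply the equation by $v$ and integrate over $\mathbb{R}^N$: this gives
$$
\int_{\mathbb{R}^{2N}}\frac{|v(x)-v(y)|^2}{|x-y|^{N+2s}}\,dx\,dy + \lambda\int_{\mathbb{R}^N}v^2\,dx = \int_H v^p\,dx,
$$
which is merely the statement that $v$ is a critical point of the functional $J$ associated with $(-\Delta)^s v+\lambda v=\chi_H v^{p-1}$. By itself this is not enough; the extra ingredient is a Pohozaev-type identity on the half-space. The plan is to multiply the equation by $x\cdot\nabla v$ (using the fractional Pohozaev identity of Ros-Oton–Serra, valid since $v\in C^{1,\beta}$ by the regularity of Lemma~\ref{le2.7}-type arguments and Appendix D of \cite{20}) over a half-space. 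Because $\chi_H$ is not smooth, the dilation vector field does not preserve $H$ unless the hyperplane passes through the origin; so I would first translate so that $\partial H=\{x_N=0\}$ and then use the scaling $v(\lambda^{1/2s}\cdot)$ structure from Proposition~\ref{pr3.1}. The Pohozaev identity then produces a boundary term on $\partial H$ of the form $\int_{\partial H}(\partial_\nu v)^2(x\cdot\nu)\,d\sigma$ which, combined with the natural energy identity, yields $\int_H v^p = 0$, hence $v\equiv 0$ on $H$, and then the original equation gives $(-\Delta)^s v+\lambda v=0$ everywhere, forcing $v\equiv 0$.

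The main obstacle is handling the discontinuous coefficient $\chi_H$ in the Pohozaev computation: the term $\int (x\cdot\nabla v)\,\chi_H v^{p-1}$ must be integrated by parts carefully, picking up a codimension-one contribution along $\partial H$ of the form $c\int_{\partial H} v^p (x\cdot\nu)\,d\sigma$, and one must check its sign is favorable (which is why the hyperplane is placed through the origin, making $x\cdot\nu$ odd and the boundary contribution controllable, or simply zero by the choice of origin on $\partial H$). A secondary technical point is justifying all integrations and the decay of $v$ at infinity needed to discard the Pohozaev boundary terms at infinity — but here $v\in H^s_{V,\varepsilon}(\mathbb{R}^N)$ together with the polynomial decay of solutions to fractional equations (as invoked in Remark~\ref{re1.3} and \cite{20}) supplies enough integrability. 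An alternative, possibly simpler, approach that sidesteps Pohozaev entirely: observe that $w:=v\,\chi_{\mathbb{R}^N\setminus H}$ gives, from the pointwise equation at points outside $H$, that $v$ restricted there satisfies a pure eigenvalue-type inequality $(-\Delta)^s v = -\lambda v\leq 0$ with $v\geq 0$; then the strong maximum principle for $(-\Delta)^s$ — which crucially uses the global values of $v$, exactly the nonlocal feature the authors emphasize — forces $v\equiv 0$ on a set of positive measure to propagate to all of $\mathbb{R}^N$. I would present the Pohozaev argument as the primary proof and mention the maximum-principle heuristic as motivation.
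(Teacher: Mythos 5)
Your primary argument (the Pohozaev identity combined with the energy identity) does not work. Placing the origin on $\partial H$ kills the surface contribution coming from differentiating $\chi_H$, and then the two identities you would obtain,
$$
\frac{N-2s}{2}\int_{\mathbb{R}^N}|(-\Delta)^{s/2}v|^2+\frac{N}{2}\,\lambda\int_{\mathbb{R}^N}v^2=\frac{N}{p}\int_H v^p,
\qquad
\int_{\mathbb{R}^N}|(-\Delta)^{s/2}v|^2+\lambda\int_{\mathbb{R}^N}v^2=\int_H v^p,
$$
are exactly the identities satisfied by the ground states of the whole-space problem $(\mathcal{P}_{\lambda})$. Since $2<p<2^*_s$, combining them only yields
$\bigl(\frac{N}{p}-\frac{N-2s}{2}\bigr)\int|(-\Delta)^{s/2}v|^2=\bigl(\frac{N}{2}-\frac{N}{p}\bigr)\lambda\int v^2$
with \emph{both} coefficients positive, i.e.\ a proportionality between two nonnegative quantities, not $\int_H v^p=0$. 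In the subcritical range a scaling identity simply cannot detect the half-space geometry, which is the whole content of the lemma; Pohozaev-type nonexistence only occurs at or above the critical exponent. In addition, the boundary term $\int_{\partial H}(\partial_\nu v)^2(x\cdot\nu)\,d\sigma$ you invoke is the Ros-Oton--Serra term for the Dirichlet problem, where the solution vanishes outside the domain; here $v$ does not vanish outside $H$ (the discontinuity sits in the coefficient $\chi_H$, not in $v$), so no such term is available, and with $0\in\partial H$ it would vanish anyway and change nothing.

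What actually proves the lemma is a rigorous version of the argument you relegate to a heuristic. The paper exploits the linearity of the equation on $\mathbb{R}^N\setminus H$: taking $H=\{x_N\ge 0\}$ and $x_0=(0,\dots,0,-2)$, it tests the equation with $v_\sigma=(\eta_\sigma\ast\chi_{B_1(x_0)})v$, whose support lies well inside $\mathbb{R}^N\setminus H$ so that the right-hand side $\chi_H v^{p-1}$ contributes nothing; letting $\sigma\to 0$ yields $v=0$ a.e.\ on $B_1(x_0)$. Only then does the nonlocal structure enter: by the regularity of $v$, at $x_0$ one can write $(-\Delta)^s v(x_0)+\lambda v(x_0)=-\int_{\mathbb{R}^N} v(y)\,|x_0-y|^{-N-2s}\,dy$, which is strictly negative unless $v\equiv 0$, contradicting the equation. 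Your alternative sketch is missing precisely this first step (producing an initial ball on which $v$ vanishes): as you observe yourself, evaluating the pointwise formula at a would-be extremum in $\mathbb{R}^N\setminus H$ cannot be signed because of the values of $v$ inside $H$. To repair the proposal, drop the Pohozaev route and carry out the localized testing (Caccioppoli-type) argument in the complementary half-space rigorously, then conclude with the pointwise nonlocal formula.
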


\begin{proof}
Without loss of generality, we assume $H = \{(x',x_N):x'\in \mathbb{R}^{N - 1},\ x_N\geq 0\}$. Let $x_0 = (0,\cdots,0,-2)$ and define
$$
\eta_{\sigma}(x): = \left\{
                      \begin{array}{ll}
                        C\sigma^{-N}exp\Big(\frac{1}{|x/\sigma|^2 - 1}\Big), & \text{if}\ |x| < \sigma, \\
                        0, & \text{if}\ |x|\geq \sigma,\
                      \end{array}
                    \right.
$$
where $0<\sigma<\frac{1}{3}$ and the constant $C$ is selected so that $\int_{\mathbb{R}^N}\eta_{\sigma} = 1$ . Let $v_{\sigma} = (\eta_{\sigma}\ast\chi_{B_1(x_0)})v$, then $v\in H^s(\mathbb{R}^N)$. Testing the equation
 with $v_{\sigma}$, to find
$$
\int_{\mathbb{R}^N}(-\Delta)^sv(x)v_{\sigma}(x)dx + \int_{\mathbb{R}^N}vv_{\sigma} = 0.
$$
Letting $\sigma\to 0$, and by Dominated Convergence Theorem, we have
$$
\int_{B_1(x_0)}|(-\Delta)^{s/2}v|^2dx + \int_{B_1(x_0)}v^2 = 0.
$$
Then $v = 0$ a.e. on $B_1(x_0)$. By the regularity result (see Appendix D in \cite{20}), we can write
$$
(-\Delta)^sv(x_0) + \lambda v(x_0)= \int_{\mathbb{R}^N}\frac{-v(y)}{|x - y|^{N + 2s}},
$$
which is negative if $v > 0$ at somewhere. Thus we conclude that $v(x)\equiv 0$ in $\mathbb{R}^N$.
\end{proof}

    The following proposition compares the penalized energy $c_{\varepsilon}$ with the limiting energy $\mathcal{C}(\cdot)$, which is important  to analyze the concentration behavior of  $u_{\varepsilon}$. A key method of the proof is truncating skill, which is necessary since $V$ may vanish at infinity  (see \eqref{teq0} and \eqref{teq}). However, as we can see in our estimates \eqref{eq3.7} and \eqref{eq3.8}, the truncating procedure makes us have to estimate the penalized energy outside some balls, which  is more complicated  in the nonlocal case.

\begin{proposition}\label{pr3.3}
 Let $(\varepsilon_n)_{n\in \mathbb{N}}$ be a sequence of positive numbers converging to $0$,  $(u_n)$ be a sequence of critical points given by Lemma \ref{le2.7},
  and for $j\in {1,2,\cdots,k}$,  $(x^j_n)$ be a sequence in $\mathbb{R}^N$  converging  to $x^j_{*}\in \mathbb{R}^N$. If
$$
\limsup_{n\to\infty}\frac{1}{\varepsilon^N_n}\int_{\mathbb{R}^N}\varepsilon^{2s}_n|(-\Delta)^{s/2}u_n|^2 + V|u_n|^2 < \infty,
$$

$$
V(x^j_{*}) > 0\ \text{and}\ \lim\limits_{n\to \infty}\frac{|x^i_n - x^j_n|}{\varepsilon_n} = \infty\ i\neq j\ \text{for all}\ i,j = 1,2,\cdots,k,
$$
and for some $\rho > 0$,
$$
\liminf_{n\to \infty}\|u_n\|_{L^{\infty}(B_{\varepsilon_n\rho}(x^j_n))} > 0\ j = 1,2,\cdots k,
$$
then $x^j_{*}\in \bar{\Lambda}$ and
$$
\liminf_{n\to \infty}\frac{J_{\varepsilon_n}(u_n)}{{\varepsilon^N_n}}\geq\sum_{j = 1}^{k}\mathcal{C}(x^j_*).
$$
\end{proposition}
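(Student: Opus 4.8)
The plan is to blow up the solution near each concentration point $x^j_n$, extract a nontrivial limiting profile solving a limiting equation on $\mathbb{R}^N$, use the Liouville-type Lemma \ref{le3.4} to rule out the possibility that $x^j_*$ lies outside $\bar\Lambda$, and finally add up the energies carried near the $k$ well-separated peaks. Concretely, for each $j$ set $w^j_n(y) = u_n(\varepsilon_n y + x^j_n)$. The energy bound $\limsup_n \varepsilon_n^{-N}\int_{\mathbb{R}^N}\varepsilon_n^{2s}|(-\Delta)^{s/2}u_n|^2 + V|u_n|^2 < \infty$ translates, after the change of variables, into a uniform $H^s_{loc}$ bound on $w^j_n$ together with a bound on $\int V(\varepsilon_n y + x^j_n)|w^j_n|^2$; since $V$ is continuous and $V(x^j_*)>0$, this last term controls $\int |w^j_n|^2$ on compact sets for $n$ large. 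Hence, up to a subsequence, $w^j_n \wto w^j$ in $H^s_{loc}(\mathbb{R}^N)$ and strongly in $L^q_{loc}$ for $q\in[1,2^*_s)$ by Remark \ref{re2.2}. The lower bound $\liminf_n \|u_n\|_{L^\infty(B_{\varepsilon_n\rho}(x^j_n))}>0$ plus the $C^{1,\beta}$ regularity and uniform interior estimates from Appendix D of \cite{20} give uniform Hölder bounds on $w^j_n$ on $B_{2\rho}(0)$, so the $L^\infty$ bound passes to the limit and $w^j \not\equiv 0$.

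Next I would identify the equation satisfied by $w^j$. Passing to the limit in $(\mathcal{Q}_{\varepsilon_n})$ rescaled around $x^j_n$: the left side converges to $(-\Delta)^s w^j + V(x^j_*) w^j$; the right side, $\mathfrak{g}_{\varepsilon_n}(u_n)$ rescaled, converges to $\chi_H (w^j)^{p-1}$ where $H$ is a limiting set — it is all of $\mathbb{R}^N$ if $x^j_*\in\Lambda$, a half-space if $x^j_*\in\partial\Lambda$ (after rotating so that $\Lambda$ looks locally like a half-space, using the smoothness of $\partial\Lambda$ from $(V_1)$), and, if $x^j_*\notin\bar\Lambda$, one must also track the penalization term $\min\{(w^j_n)^{p-1}, P_{\varepsilon_n}(\varepsilon_n y + x^j_n)w^j_n\}$, which tends to $0$ by \eqref{2.1}. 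In the first case $w^j$ solves $(\mathcal{P}_{V(x^j_*)})$ and is nontrivial and nonnegative, so it is a supersolution built from the ground state and $\mathcal{E}(V(x^j_*)) = \mathcal{C}(x^j_*)$ bounds the energy it carries from below. In the half-space and outside cases, $w^j$ solves $(-\Delta)^s w^j + V(x^j_*)w^j = \chi_H (w^j)^{p-1}$ with $H$ a half-space or empty, so Lemma \ref{le3.4} forces $w^j \equiv 0$, contradicting nontriviality; hence $x^j_*\in\bar\Lambda$, giving the first assertion. (The subtle point in the half-space reduction is that the rescaled domains $(\Lambda - x^j_n)/\varepsilon_n$ must genuinely converge to a half-space in a sense strong enough to pass the indicator functions to the limit; the smoothness of $\partial\Lambda$ is exactly what makes this work.)

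For the energy lower bound, the natural route is: by weak lower semicontinuity and the local strong $L^p$ convergence, for each fixed $R>0$,
\begin{align*}
\liminf_{n\to\infty}\frac{1}{\varepsilon_n^N}\Big[\frac12\int_{B_{\varepsilon_n R}(x^j_n)}\varepsilon_n^{2s}|(-\Delta)^{s/2}u_n|^2 + V|u_n|^2 - \frac1p\int_{B_{\varepsilon_n R}(x^j_n)}\mathfrak{G}_{\varepsilon_n}(u_n)\Big]
\ \ge\ \Phi_R(w^j),
\end{align*}
where $\Phi_R$ is the localized limiting functional; letting $R\to\infty$ and using that $w^j$ solves the (full-space) limiting equation, together with the standard identity that for a nonnegative solution of $(\mathcal{P}_\lambda)$ the energy equals $\max_{t>0} I_\lambda(tw^j) \ge \mathcal{E}(\lambda) = \mathcal{C}(x^j_*)$, gives that the energy in any fixed-radius rescaled ball around $x^j_n$ is asymptotically at least $\mathcal{C}(x^j_*)$. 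The key to summing is the separation hypothesis $|x^i_n - x^j_n|/\varepsilon_n\to\infty$, which guarantees that for $R$ fixed and $n$ large the balls $B_{\varepsilon_n R}(x^j_n)$, $j=1,\dots,k$, are pairwise disjoint, so the contributions add; since the integrand $\varepsilon_n^{2s}|(-\Delta)^{s/2}u_n|^2 + V|u_n|^2 - \frac2p\mathfrak{G}_{\varepsilon_n}(u_n)$ is nonnegative up to the penalization error (here $(P_2)$ and the structure of $g_\varepsilon$, which makes $\frac1p G_\varepsilon \le \frac12\cdot\frac1p$-type terms controllable, or simply the fact that $\frac12 t\,g_\varepsilon - \frac1p G_\varepsilon \ge 0$) one discards the complement of the union of balls and obtains $\liminf_n \varepsilon_n^{-N} J_{\varepsilon_n}(u_n) \ge \sum_{j=1}^k \mathcal{C}(x^j_*)$.

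The main obstacle I anticipate is not the peak-by-peak blow-up — that is fairly standard — but rather the nonlocal bookkeeping when summing energies. Because $(-\Delta)^{s/2}$ is nonlocal, $\int_{\mathbb{R}^N}|(-\Delta)^{s/2}u_n|^2$ does not split as a clean sum of integrals over the separated balls plus a nonnegative remainder; there are cross terms coming from the Gagliardo double integral linking distant regions. Controlling these requires showing that the interaction between the far-apart peaks decays — which is exactly the delicate estimate the authors flag in Remark \ref{re1.3} as needing the decomposition of equation \eqref{teq} and assumption $(V_2)$ — so that the localized energies are asymptotically additive. This is where assumption $(V_2)$ (polynomial lower bound on $V$ at infinity, controlling the tails of $u_n$) and the separation rate enter in an essential, quantitative way, and it is the step I expect to be the technical heart of the proof.
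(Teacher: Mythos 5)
Your overall strategy is the same as the paper's: rescale around each $x^j_n$, pass to a weak limit $v^j_*$, identify the limit equation with right-hand side $\chi_{\Lambda^j_*}(v^j_*)^{p-1}_+$ where $\Lambda^j_*$ is $\mathbb{R}^N$, a half-space or empty, use local uniform convergence (regularity from Appendix D of \cite{20}) to get $v^j_*\not\equiv 0$, invoke the Liouville Lemma \ref{le3.4} to force $\Lambda^j_*=\mathbb{R}^N$ and $x^j_*\in\bar\Lambda$, and then sum the localized energies over the disjoint balls $B_{\varepsilon_n R}(x^j_n)$. However, the proof is not complete at precisely the point you yourself flag as ``the technical heart'': you never establish that the energy on $\mathbb{R}^N\setminus\bigcup_j B_{\varepsilon_n R}(x^j_n)$ is bounded below by $-o_R(1)\varepsilon_n^N$. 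The pointwise inequality $\tfrac12 t\,g_\varepsilon(x,t)-\tfrac1p G_\varepsilon(x,t)\ge 0$ that you invoke is of no use by itself, because outside the balls the quadratic energy density $\varepsilon_n^{2s}|(-\Delta)^{s/2}u_n|^2+V u_n^2$ is not pointwise comparable to $\mathfrak{g}_{\varepsilon_n}(u_n)u_n$; to relate them one must test $(\mathcal{Q}_{\varepsilon_n})$ against $\psi_{n,R}u_n$ with $\psi_{n,R}=\prod_j\eta\big((x-x^j_n)/(\varepsilon_n R)\big)$, which is the identity \eqref{teq}. This produces the nonlocal commutator term $I_n$ involving $u_n(y)\big(\psi_{n,R}(x)-\psi_{n,R}(y)\big)\big(u_n(x)-u_n(y)\big)|x-y|^{-N-2s}$, and the entire difficulty of the proposition is to show $\varepsilon_n^{2s-N}|I_n|\le o_R(1)$ (estimates \eqref{eq3.7}, \eqref{eq3.8}, \eqref{impotrant} in the paper). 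Since the only global information available on the rescaled functions is the weighted bound $\int V(\varepsilon_n y+x^j_n)|v^j_n|^2<\infty$, this step forces one to convert the far-field kernel decay into that weight, and it is exactly here that $(V_2)$, i.e. $\liminf_{|x|\to\infty}V(x)|x|^{2s}>0$, is used quantitatively (cf.\ Remark \ref{re3.5}); you name this issue but provide no argument for it. Note also that you cannot shortcut it by using smallness of $u_n$ away from the peaks: that decay is Lemma \ref{le3.6}, which is proved \emph{from} this proposition, so such an argument would be circular.

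Two smaller points. First, the localized lower bound on each ball is $\mathcal{C}(x^j_*)-o_R(1)$, not $\mathcal{C}(x^j_*)$: because of the Gagliardo cross term between $B_R$ and $B_R^c$ one must subtract $C\int_{\mathbb{R}^N\setminus B_R}\big(|(-\Delta)^{s/2}v^j_*|^2+V(x^j_*)|v^j_*|^2\big)$, and the limit $R\to\infty$ must be taken last, after the outside-the-balls estimate; your sketch takes $R\to\infty$ peak by peak before handling the complement, which does not parse as written. Second, the inner annulus term $\int\mathfrak{g}_{\varepsilon_n}(u_n)(\psi_{n,R}-1)u_n$ produced by the cutoff is supported in the rescaled annuli $B_{2R}\setminus B_R$ and is $o_R(1)$ by the local strong convergence $v^j_n\to v^j_*$ in $L^p_{loc}$; this term should be accounted for explicitly rather than absorbed into the vague ``penalization error''. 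With the estimate of $I_n$ supplied along the lines of \eqref{eq3.7}--\eqref{impotrant}, your outline becomes the paper's proof.
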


\begin{proof}
 For $j\in \{1,\ldots,k\}$, we define  the rescaling  function $v^j_n\in H^s_{loc}(\mathbb{R}^N)$ by
$$
v^j_n(y) = u_n(x^j_n + \varepsilon_ny).
$$
Since $u_n$ solves the penalized problem $(\mathcal{Q}_{\varepsilon})$, the function $v^j_n$ satisfies weakly the rescaled equation
$$
(-\Delta)^{s} v^j_n + V^j_n v^j_n = \textsf{g}^j_n(v^j_n)\ \text{in}\ \mathbb{R}^N,
$$
where  $V^j_n(y) = V(x^j_n + \varepsilon_n y),\ \textsf{g}^j_n(v^j_n) = g_{\varepsilon_n}(x^j_n + \varepsilon_ny, v^j_n).
$

For every $R > 0$, we see
$$
\int_{B_R}|(-\Delta)^{s/2}v^j_n|^2 + V^j_n|v^j_n|^2 = \frac{1}{\varepsilon^N_n}\int_{B_{\varepsilon_nR}(x^j_n)}\varepsilon^{2s}_n|(-\Delta)^{s/2}u_n|^2 + V|u_n|^2.
$$
Since $V$ is positive and continuous at $x^j_* = \lim\limits_{n\to\infty}x^i_n$, we have
\begin{equation}\label{teq0}
\liminf_{n\to\infty}\int_{B_R}|(-\Delta)^{s/2}v^j_n|^2 + V(x^j_*)|v^j_n|^2\leq\limsup_{n\to\infty}\frac{1}{\varepsilon^N_n}\int_{\mathbb{R}^N}\varepsilon^{2s}_n||(-\Delta)^{s/2}u_n|^2 + V|u_n|^2.
\end{equation}
Hence, via a diagonal argument, there exists $v^j_*\in H^s_{loc}(\mathbb{R}^N)$ such that for every $R > 0$, $v^j_n\rightharpoonup v^j_*$ weakly in $H^s(B_R)$  and
\begin{align*}
&\int_{B_R}dx\Big(\int_{B_R}\frac{|v^j_{*}(x) - v^j_{*}(y)|^2}{|x - y|^{N + 2s}}dy + V(x^j_*)|v^j_*|^2\Big)\\
              \leq& \liminf_{n\to\infty}\frac{1}{\varepsilon^N_n}\int_{\mathbb{R}^N}\varepsilon^{2s}_n|(-\Delta)^{s/2}u_n|^2 + V|u_n|^2 < \infty.
\end{align*}
This implies $v^j_*\in H^s(\mathbb{R}^N)$. Moreover, for  $1\leq q < 2^*_s$, we have $v^j_n \to v^j_*$ in $L^q(B_R)$ strongly as $n\to\infty$.

Define
$$
\Lambda^j_n = \{y\in \mathbb{R}^N: x^j_n + \varepsilon_n y \in \Lambda\}.
$$
Since $\partial\Lambda$ is smooth, we can assume that $\chi_{\Lambda^j_n}\to \chi_{\Lambda^j_*}$ almost everywhere as $n\to\infty$, where $\Lambda^j_*$ is either $\mathbb{R}^N$, a half-space or $\emptyset$.

Observe that
\begin{align*}
  g^j_n(x,t)  & = \chi_{\Lambda^j_n}(x)t^{p - 1}_+ + \chi_{\mathbb{R}^N\backslash \Lambda^j_n}(x)\min\big\{P_{\varepsilon_n}(x)t_+, t^{p - 1}_+\big\}\vspace{2mm}\\
              &\rightarrow \chi_{\Lambda^j_*}(x)t^{p - 1}_+\ \forall\,\,(x,t)\in \mathbb{R}^N\times \mathbb{R},
\end{align*}
and
$$
|\textsf{g}^j_n(v^j_n)|\leq (v^j_n)^{p - 1}_+.
$$
Then by Remark \ref{re2.2},
\begin{equation}\label{eq3.5}
\textsf{g}^j_n(v^j_n) \to \chi_{\Lambda^j_*}(x)(v^j_*)^{p - 1}_+\ \text{in}\ L^q_{loc}(\mathbb{R}^N),
\end{equation}
where $ q(p - 1) < 2^*_s$. Hence  $v^j_*$ satisfies
$$
(-\Delta)^sv^j_* + V(x^j_*)v^j_* = \chi_{\Lambda^j_*}(v^j_*)^{p - 1}_+,\,\,x\in \Lambda^j_*.
$$

By the  bootstrap argument adapted to the fractional Schr\"{o}dinger equation (see  Appendix D in \cite{20}, for example), $v^j_n\to v^j_*$ uniformly on compact subsets of $\mathbb{R}^N$. Hence
\begin{align*}
 &\|v^j_*\|_{L^{\infty}(B_\rho)}
  = \lim\limits_{n\to\infty}\|v^j_n\|_{L^{\infty}(B_\rho)}
  = \lim\limits_{n\to\infty}\|u_n\|_{L^{\infty}(B_{\varepsilon_n\rho}(x^j_n))} > 0. \\
\end{align*}

Therefore, $v^j_*\not\equiv 0$ and the set $\Lambda^j_*$ cannot be empty. By Lemma \ref{le3.4}, it can neither be half-space. Hence we conclude that $\Lambda^j_* = \mathbb{R}^N$, $x^j_*\in \bar{\Lambda}$, and the function $v^j_*$ satisfies
$$
(-\Delta)^{s}v^j_* + V(x^j_*)v^j_* = (v^j_*)^{p - 1}_+\ \  \text{in}\ \mathbb{R}^N.
$$
Since $v^j_* \geq 0$, we see
\begin{align*}
  &\liminf_{n\to\infty}\frac{1}{{\varepsilon^N_n}}\int_{B_{\varepsilon_nR}(x^j_n)}\frac{1}{2}(\varepsilon^{2s}_n|(-\Delta)^{s/2}u_n|^2 + V|u_n|^2) - \mathfrak{G}_{\varepsilon_n}(u_n)\\
  \geq & \frac{1}{2}\int_{B_R}|(-\Delta)^{s/2}v^j_*|^2 + V(x^j_*)|v^j_*|^2 - \int_{\mathbb{R}^N}\frac{1}{p}(v^j_*)^p_+\\
  &- \frac{1}{2}\int_{B_R}dx \int_{B^c_R}\frac{|v^j_*(x) - v^j_*(y)|^2}{|x - y|^{N + 2s}}dy\\
 \geq &\mathcal{C}(x^j_*) - C\int_{\mathbb{R}^N\backslash B_R}|(-\Delta)^{s/2}v^j_*|^2 + V(x^j_*)|v^j_*|^2,
\end{align*}
where $C > 0$ is a constant.

In order to study the integral outside $B_{\varepsilon_nR}(x^j_n)$, we choose $\eta\in C^{\infty}(\mathbb{R}^N)$ such that $0\leq \eta\leq 1,\ \eta = 0$ on $B_1$ and $\eta = 1$ on $\mathbb{R}^N\backslash B_2$. Define
$$
\psi_{n,R}(x) = \prod^{k}_{j = 1}\eta\Big(\frac{x - x^j_n}{\varepsilon_n R}\Big).
$$
Since $u_n$ is a solution to the penalized problem $(\mathcal{P}_{\varepsilon_n})$, we have by taking $\psi_{n,R}u_n$ as a test function in the penalized problem $(\mathcal{P}_{\varepsilon_n})$ that
\begin{align}\label{teq}
 \nonumber &\int_{\mathbb{R}^N\backslash\cup^{k}_{j = 1}B_{\varepsilon_nR}(x^j_n)}\varepsilon^{2s}_n\psi_{n,R}|(-\Delta)^{s/2}u_n|^2 + V\psi_{n,R}|u_n|^2\\
   = &\int_{\mathbb{R}^N\backslash\cup^{k}_{j = 1}B_{\varepsilon_nR}(x^j_n)}\mathfrak{g}_{\varepsilon_n}(u_n)u_n\psi_{n,R}\\
\nonumber  &-\int_{\mathbb{R}^N\backslash\cup^{k}_{j = 1}B_{\varepsilon_nR}(x^j_n)}\int_{\mathbb{R}^N}\frac{u_n(y)(\psi_{n,R}(x) - \psi_{n,R}(y))(u_n(x) - u_n(y))}{|x - y|^{N + s}}dydx\\
\nonumber  :=& \int_{\mathbb{R}^N\backslash\cup^{k}_{j = 1}B_{\varepsilon_nR}(x^j_n)}\mathfrak{g}_{\varepsilon_n}(u_n)u_n\psi_{n,R} + I_n.
\end{align}
Hence
\begin{align*}
  &\ds\int_{\mathbb{R}^N\backslash\cup^{k}_{j = 1}B_{\varepsilon_nR}(x^j_n)}\frac{1}{2}(\varepsilon^{2s}_n|(-\Delta)^{s/2}u_n|^2 + V|u_n|^2) - \mathfrak{G}_{\varepsilon_n}\\
  \geq & \frac{1}{2}\int_{\mathbb{R}^N\backslash\cup^{k}_{j = 1}B_{\varepsilon_nR}(x^j_n)}\varepsilon^{2s}_n\psi_{n,R}|(-\Delta)^{s/2}u_n|^2 + V\psi_{n,R}|u_n|^2 - \mathfrak{g}_{\varepsilon_n}(u_n)u_n\\
   =& -\frac{\varepsilon^{2s}_n}{2}I_n + \int_{\mathbb{R}^N\backslash\cup^{k}_{j = 1}B_{\varepsilon_nR}(x^j_n)}\mathfrak{g}_{\varepsilon_n}(u_n)(\psi_{n,R} - 1)u_n.
\end{align*}
Let $\eta_R(x) = \eta(\frac{x}{R})$, by scaling, if $n$ is large enough so that $B_{\varepsilon_nR}(x^j_n)\cap B_{\varepsilon_nR}(x^l_n) = \emptyset$,
\begin{align*}
&\frac{1}{\varepsilon^N_n}\int_{\mathbb{R}^N\backslash\cup^{k}_{j = 1}B_{\varepsilon_nR}(x^j_n)}\frac{1}{2}(\varepsilon^{2s}_n|(-\Delta)^{s/2}u_n|^2 + V|u_n|^2) - \mathfrak{G}_{\varepsilon_n}\\
\geq & - \frac{1}{2}\varepsilon^{2s - N}I_n -\frac{1}{2}\sum_{j = 1}^{k}\int_{B_{2R}\backslash B_{R}}\textsf{g}^j_n(v^j_n)v^j_n.
\end{align*}
By the fact that $v^j_n\to v^j_*$ in $L^{p}_{loc}(\mathbb{R}^N)$, we have
$$
\lim\sup_{n\to\infty}\Big|\frac{1}{2}\sum_{j = 1}^{k}\int_{B_{2R}\backslash B_{R}}\textsf{g}^j_n(v^j_n)v^j_n\Big| = \Big|\frac{1}{2}\sum_{j = 1}^{k}\int_{B_{2R}\backslash B_{R}}(v^j_*)^p\Big| = o_R(1).
$$
Now we estimate $I_n$. A change of variable tells us
\begin{align}\label{eqi1}
I_n & = \int_{\mathbb{R}^N}u_n(y)dy\int_{\mathbb{R}^N}\frac{(u_n(x) - u_n(y))(\psi_{n,R}(x) - \psi_{n,R}(y))}{|x - y|^{N + 2s}}dx\\
\nonumber    & = \varepsilon^{N - 2s}_n\sum_{l = 1}^k\int_{\mathbb{R}^N}v^l_n(y)\beta^l_n(y)dy\int_{\mathbb{R}^N}\frac{\alpha^l_n(x)(v^l_n(x) - v^l_n(y))(\eta_R(x) - \eta_R(y))}{|x - y|^{N + 2s}}dx,
\end{align}
where
$$
\beta^l_n(y) = \prod^{l - 1}_{s = 0}\eta\Big(\frac{y}{R} + \frac{x^l_n - x^s_n}{\varepsilon_n R}\Big),\,\,\,\beta^0_n(y)\equiv 1
$$
and
$$
\alpha^l_n(x) = \prod^{k}_{s = l + 1}\eta\Big(\frac{x}{R} + \frac{x^l_n - x^s_n}{\varepsilon_n R}\Big),\,\,\, \alpha^k_n(x) \equiv 1.
$$
Then
\begin{align*}
\varepsilon^{2s - N}_nI_n & = \sum_{l = 1}^k\int_{B_{2R}} v^l_n(y) \beta^l_n(y) dy \int_{B^c_{2R}}\frac{\alpha^l_n(x)(v^l_n(x) - v^l_n(y))(\eta_R(x) - \eta_R(y))}{|x - y|^{N + 2s}}dx\\
   &\quad + \sum_{l = 1}^k \int_{B^c_{2R}}v^l_n(y)\beta^l_n(y)dy  \int_{B_{2R}}\frac{\alpha^l_n(x)(v^l_n(x) - v^l_n(y))(\eta_R(x) - \eta_R(y))}{|x - y|^{N + 2s}}dx\\
   &\quad + \sum_{l = 1}^k\int_{B_{2R}}v^l_n(y)\beta^l_n(y)dy   \int_{B_{2R}}\frac{\alpha^l_n(x)(v^l_n(x) - v^l_n(y))(\eta_R(x) - \eta_R(y))}{|x - y|^{N + 2s}}dx\\
   &:=I^{(1)}_n + I^{(2)}_n + I^{(3)}_n.
\end{align*}
By the choice of $\eta$ and  $\lim\limits_{n\to\infty}\frac{|x^l_n - x^s_n|}{\varepsilon_n} = \infty$ if $l\neq s$, for $n$ large,
\begin{align*}
I^{(1)}_n&= \sum_{l = 1}^k\int_{B_{2R}\backslash B_R}v^l_n(y)\beta^l_n(y)dy\int_{B^c_{2R}}\frac{\alpha^l_n(x)(v^l_n(x) - v^l_n(y))(1 - \eta_R(y))}{|x - y|^{N + 2s}}dx\\
       &\quad + \sum_{l = 1}^k\int_{{B_R}}v^l_n(y)\beta^l_n(y)dy\int_{B^c_{2R}}\frac{\alpha^l_n(x)(v^l_n(x) - v^l_n(y))}{|x - y|^{N + 2s}}dx\\
       &= \sum_{l = 1}^k\int_{B_{2R}\backslash B_R}v^l_n(y)dy\int_{B^c_{2R}}\frac{\alpha^l_n(x)(v^l_n(x) - v^l_n(y))(1 - \eta_R(y))}{|x - y|^{N + 2s}}dx\\
       &\quad + \sum_{l = 1}^k\int_{{B_R}}v^l_n(y)dy\int_{B^c_{2R}}\frac{\alpha^l_n(x)(v^l_n(x) - v^l_n(y))}{|x - y|^{N + 2s}}dx\\
       :&=I^{(11)}_n + I^{(12)}_n
\end{align*}
and
\begin{align*}
        I^{(2)}_n
          =& \sum_{l = 1}^k\int_{B^c_{2R}}v^l_n(y)\beta^l_n(y)dy\int_{B_{2R}\backslash B_R}\frac{\alpha^l_n(x)(v^l_n(x) - v^l_n(y))(\eta_R(x) - 1)}{|x - y|^{N + 2s}}dx\\
         & - \sum_{l = 1}^k\int_{B^c_{2R}}v^l_n(y)\beta^l_n(y)dy\int_{B_R}\frac{\alpha^l_n(x)(v^l_n(x) - v^l_n(y))}{|x - y|^{N + 2s}}dx.\\
         =& \sum_{l = 1}^k\int_{B^c_{2R}}v^l_n(y)\beta^l_n(y)dy\int_{B_{2R}\backslash B_R}\frac{(v^l_n(x) - v^l_n(y))(\eta_R(x) - 1)}{|x - y|^{N + 2s}}dx\\
         & - \sum_{l = 1}^k\int_{B^c_{2R}}v^l_n(y)\beta^l_n(y)dy\int_{B_R}\frac{v^l_n(x) - v^l_n(y)}{|x - y|^{N + 2s}}dx.\\
         :=&I^{(21)}_n + I^{(22)}_n.
\end{align*}
Also, for large $n$,
\begin{align*}
I^{(3)}_n =&\sum_{l = 1}^k\int_{B_{2R}\backslash{B_R}}v^l_n(y)dy\int_{B_{2R}}\frac{(v^l_n(x) - v^l_n(y))(\eta_R(x) - \eta_R(y))}{|x - y|^{N + 2s}}dx\\
& + \sum_{l = 1}^k\int_{B_{2R}}v^l_n(y)dy\int_{B_{2R}\backslash{B_R}}\frac{(v^l_n(x) - v^l_n(y))(\eta_R(x) - \eta_R(y))}{|x - y|^{N + 2s}}dx\\
&+ \sum_{l = 1}^k\int_{B_{2R}\backslash{B_R}}v^l_n(y)dy\int_{B_{2R}\backslash B_{R}}\frac{(v^l_n(x) - v^l_n(y))(\eta_R(x) - \eta_R(y))}{|x - y|^{N + 2s}}dx\\
:=& I^{(31)}_n + I^{(32)}_n + I^{(33)}_n.
\end{align*}

By the assumption on $V$, we have
\begin{align}\label{eq3.7}
\nonumber    &\limsup_{n\to\infty}  |I^{(i2)}_n|   \\
\nonumber    \leq& 2\sum_{l = 1}^k\int_{B^c_{2R}}dy\int_{B_{R}}\frac{(v^l_n(x))^2 + (v^l_n(y))^2}{(|y| - R)^{N + 2s}}dx\\
                     \leq & 2\sum_{l = 1}^k\int_{B_{R}}(v^l_n(x))^2dx \int_{B^c_{2R}}\frac{1}{(|y| - R)^{N + 2s}}dy\\
\nonumber& + 2C_NR^{N}\sum_{l = 1}^k\int_{B^c_{2R}}\frac{V^l_n(y)(v^l(y))^2}{V^l_n(y)|{\varepsilon_n} y + x^j_n|^{2s}|{\varepsilon_n} y + x^j_n|^{-2s}|y|^{2s}|y|^{-2s}
(|y| - R)^{N + 2s}}dy\\
\nonumber\leq & CR^{-2s}\sum_{l = 1}^k\int_{B_{R}}(v^l_*(x))^2dx + C\limsup_{n\to\infty}\sum_{l = 1}^k\Big(\varepsilon_n + \frac{1}{R}\Big)^{2s}\int_{B^c_{2R}}V^l_n(y)(v^l(y))^2\\
\nonumber =& o_R(1)
\end{align}
for $i = 1,2$.

Now for $I^{(11)}_n$, since $v^l_n\rightharpoonup v^l_*$ in $H^s_{loc}(\mathbb{R}^N)$, $v^l_n\to v^l_*$ in $L^q_{loc}(\mathbb{R}^N)$ if $1\leq q < 2^*_s$, for $1\leq l\leq k$. By Cauchy inequality and the estimates of $I^{(i2)}_n$,
\begin{eqnarray}\label{eq3.8}
&&\limsup_{n\to\infty}  |I^{(11)}_n|\\
\nonumber&\leq& \limsup_{n\to\infty}  \sum_{l = 1}^k\int_{B_{2R}\backslash B_R}dy\int_{B^c_{2R}}\frac{|v^l_n(x) - v^l_n(y)|^2}{|x - y|^{N + 2s}}dx\\
\nonumber && + \limsup_{n\to\infty}\sum_{l = 1}^k \int_{B_{2R}\backslash B_R}(v^l_n(y))^2dy \int_{B^c_{2R}}\frac{(1 - \eta_{R}(y))^2}{|x - y|^{N + 2s}}dx\\
\nonumber & \leq &\limsup_{n\to\infty}  \sum_{l = 1}^k  \int_{B_{2R}\backslash B_R}dy\int_{B^c_{2R}\cap B_{4R}}\frac{|v^l_n(x) - v^l_n(y)|^2}{|x - y|^{N + 2s}}dx\\
\nonumber &&+ \limsup_{n\to\infty}\sum_{l = 1}^k \int_{B_{2R}\backslash B_R}dy\int_{B^c_{4R}}\frac{|v^l_n(x) - v^l_n(y)|^2}{|x - y|^{N + 2s}}dx + C\limsup_{n\to\infty}\sum_{l = 1}^k\int_{B_{2R}\backslash B_R}(v^l_n(y))^2dy\\
\nonumber & \leq& \limsup_{n\to\infty}\sum_{l = 1}^k\int_{B_{2R}\backslash B_R}dy\int_{B^c_{2R}\cap B_{4R}}\frac{|v^l_n(x) - v^l_n(y)|^2}{|x - y|^{N + 2s}}dx + C \limsup_{n\to\infty}(\varepsilon_n + 1/R)^{2s}\\
\nonumber &&+ C\limsup_{n\to\infty}\sum_{l = 1}^k\int_{B_{2R}\backslash B_R}(v^l_n(y))^2dy\\
\nonumber & \leq& C\int_{B_{2R}\backslash B_R}(v^l_*(y))^2dy + o_R(1) \\
\nonumber &&+ \limsup_{n\to\infty}\sum_{l = 1}^k\int_{B_{4R}\cap B^c_{3R/2}}dy\int_{B_{4R}\cap B^c_{3R/2}}\frac{|v^l_n(x) - v^l_n(y)|^2}{|x - y|^{N + 2s}}dx\\
\nonumber && + \limsup_{n\to\infty}\sum_{l = 1}^k\int_{B_{3R/2}\backslash B_R}dy\int_{B^c_{2R}\cap B_{4R}}\frac{|v^l_n(x) - v^l_n(y)|^2}{|x - y|^{N + 2s}}dx\\
\nonumber &\leq& o_R(1) + \limsup_{n\to\infty}\sum_{l = 1}^k\int_{B_{4R}\cap B^c_{3R/2}}dy\int_{B_{4R}\cap B^c_{3R/2}}\frac{|v^l_*(x) - v^l_*(y)|^2}{|x - y|^{N + 2s}}dx\\
\nonumber & =& o_R(1).
\end{eqnarray}
Similarly, we get
\begin{align*}
    \limsup_{n\to\infty}  |I^{(21)}_n| &\leq o_R(1)
\end{align*}
and
\begin{align*}
  \limsup_{n\to\infty}|I^{(31)}_n| + |I^{(32)}_n| + |I^{(33)}_n|\leq o_R(1).
\end{align*}
Therefore
\begin{equation}\label{impotrant}
\varepsilon^{2s - N}|I_n| \leq o_R(1).
\end{equation}
Letting $R\to\infty$, we  have
$$
\liminf_{n\to \infty}\frac{J_{\varepsilon_n}(u_n)}{{\varepsilon^N_n}}\geq\sum_{j = 1}^{k}\mathcal{C}(x^j_*).
$$
\end{proof}

\begin{remark}\label{re3.5}
The nonlocal effect of $(-\Delta)^s$ makes us have to find the estimate on the global $L^2$-norm of the rescaled function $v^l_n(y)$, see \eqref{eq3.7}, \eqref{eq3.8}. This is quite different to the local operator $-\Delta$. Actually, for local operator, we only need some local information on $u_{\varepsilon}$, see the proof of Proposition 3.3 in  \cite{28} for example.

For the global $L^2$ norm of the rescaled function $v^l_n(y)$, we know nothing but
$$
\int_{\mathbb{R}^N}V(\varepsilon_n y + x_{\varepsilon_n})(v^l_n(y))^2dy = \frac{1}{\varepsilon^N_n}\int_{\mathbb{R}^N}V(x)|u_n(x)|^2 < +\infty,
$$
so, it is naturally to test it against with potential term: $V(\varepsilon_n y + x_{\varepsilon_n})$. Then, it is crucial in using the fact
$$
\liminf_{|y|\to \infty}V(\varepsilon_n y + x^j_{n})|\varepsilon_n y + x^j_{n}|^{2s} > 0,
$$
which comes from our assumption $(V_2)$ and which implies that for all $|y|\geq 2R$
\begin{align*}
 &\quad V^l_n(y)|{\varepsilon_n} y + x^j_n|^{2s}|{\varepsilon_n} y + x^j_n|^{-2s}|y|^{2s}|y|^{-2s}
(|y| - R)^{N + 2s}\\
 &\geq \frac{R^{N}}{2^{2s}} V^l_n(y)|{\varepsilon_n} y + x^j_n|^{2s}|{\varepsilon_n} y + x^j_n|^{-2s}|y|^{2s}\\
 &\geq \left\{
         \begin{array}{ll}
           \frac{R^{N + 2s}}{2^{2s}}\inf_{\Lambda}V(x), & \text{if}\ {\varepsilon_n} y + x^j_n\in \Lambda  \\
           \frac{R^{N}}{2^{2s}}\Big(\frac{|y|}{\varepsilon_n|y| + |x^j_n|}\Big)^{2s} \inf_{x\in \Lambda^c}V(x)|x|^{2s}, & \text{if}\ {\varepsilon_n} y + x^j_n\in \Lambda^c.
         \end{array}
       \right.
\end{align*}

\end{remark}

    In the following, we let $U \subset \mathbb{R}^N$ be smooth bounded such that $\inf_U V > 0$ and $\bar{\Lambda}\subset U$. Combining Lemma \ref{le3.2}, and Proposition \ref{pr3.3}, we can obtain the concentration phenomena.

\begin{lemma}\label{le3.6}
 Let $\rho > 0$. There exists a family of points $\{x_{\varepsilon}\}\subset\Lambda$ such that
\begin{eqnarray*}
  &&\liminf_{\varepsilon\to 0}\|u_{\varepsilon}\|_{L^{\infty}(B_{\varepsilon\rho}(x_{\varepsilon}))}> 0,\\
  &&\lim\limits_{\varepsilon\to 0}V(x_{\varepsilon}) = \inf_{\Lambda}V,\\
  &&\lim\limits_{\varepsilon\to 0}dist(x_{\varepsilon},\mathbb{R}^N\backslash\Lambda) > 0,\\
  &&\lim\limits_{{R\to \infty}\atop{{\varepsilon\to 0}}}\|u_{\varepsilon}\|_{L^{\infty}(U\backslash B_{\varepsilon R}(x_{\varepsilon}))} =  0.
\end{eqnarray*}

\end{lemma}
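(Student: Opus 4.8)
The plan is to locate the concentration point by selecting, for each small $\varepsilon$, a point $x_\varepsilon\in\bar\Lambda$ where $u_\varepsilon$ is, in an $L^\infty$-sense over balls of radius $\varepsilon\rho$, as large as possible, and then to feed the sequence $(x_\varepsilon)$ into Proposition \ref{pr3.3}. First I would argue that $u_\varepsilon$ does not decay to zero at the $\varepsilon$-scale: by the mountain-pass characterization and the uniform bound $c_\varepsilon\le C\varepsilon^N$ from Lemma \ref{le3.2}, together with the global $L^2$ and $L^\infty$ control from Appendix D of \cite{20}, a rescaling $v_\varepsilon(y)=u_\varepsilon(a_\varepsilon+\varepsilon y)$ around a suitable sequence $a_\varepsilon$ converges (on compacts, uniformly, by the bootstrap in \cite{20}) to a nontrivial solution of a limiting equation; in particular $\limsup_{\varepsilon\to0}\|u_\varepsilon\|_{L^\infty}>0$ and the near-maximum is attained, up to $o(\varepsilon)$, inside $\Lambda$ because outside $\Lambda$ the nonlinearity is penalized by $P_\varepsilon\to0$, which would force the rescaled limit to be zero. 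This yields a family $x_\varepsilon$ (e.g.\ near a point realizing $\max_{x\in\bar\Lambda}\|u_\varepsilon\|_{L^\infty(B_{\varepsilon\rho}(x))}$) with $\liminf_{\varepsilon\to0}\|u_\varepsilon\|_{L^\infty(B_{\varepsilon\rho}(x_\varepsilon))}>0$, and after passing to a subsequence $x_\varepsilon\to x_*$ with $V(x_*)>0$ since $\bar\Lambda\subset U$ and $\inf_U V>0$.

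Next I would pin down $V(x_*)=\inf_\Lambda V$. Applying Proposition \ref{pr3.3} with $k=1$ to this single sequence gives
\begin{equation*}
\liminf_{\varepsilon\to0}\frac{J_\varepsilon(u_\varepsilon)}{\varepsilon^N}\ge \mathcal{C}(x_*)=\mathcal{E}(V(x_*)),
\end{equation*}
while Lemma \ref{le3.2} gives $\limsup_{\varepsilon\to0}c_\varepsilon/\varepsilon^N\le\inf_{x\in\Lambda}\mathcal{C}(x)=\mathcal{E}(\inf_\Lambda V)$ (using that $\mathcal{E}$ is increasing by Proposition \ref{pr3.1}). Since $c_\varepsilon=J_\varepsilon(u_\varepsilon)$, this sandwich forces $\mathcal{E}(V(x_*))\le\mathcal{E}(\inf_\Lambda V)$, hence $V(x_*)\le\inf_\Lambda V$ by strict monotonicity; as $x_*\in\bar\Lambda$ and $V$ is continuous we get $V(x_*)=\inf_\Lambda V$, and then $\lim_{\varepsilon\to0}V(x_\varepsilon)=V(x_*)=\inf_\Lambda V$. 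The assumption $(V_1)$, $\inf_\Lambda V<\inf_{\partial\Lambda}V$, then places $x_*$ in the interior of $\Lambda$, which gives $\liminf_{\varepsilon\to0}\dist(x_\varepsilon,\mathbb{R}^N\setminus\Lambda)>0$; in particular $x_\varepsilon\in\Lambda$ for small $\varepsilon$.

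The last assertion, $\lim_{R\to\infty,\varepsilon\to0}\|u_\varepsilon\|_{L^\infty(U\setminus B_{\varepsilon R}(x_\varepsilon))}=0$, is the one I expect to be the main obstacle, and I would prove it by contradiction combined again with Proposition \ref{pr3.3}. If it failed, there would be $\delta>0$, $\varepsilon_n\to0$, $R_n\to\infty$, and points $y_n\in U$ with $|y_n-x_{\varepsilon_n}|\ge\varepsilon_n R_n$ and $u_{\varepsilon_n}(y_n)\ge\delta$; rescaling around $y_n$ and using the uniform regularity estimates we would get $\liminf_n\|u_{\varepsilon_n}\|_{L^\infty(B_{\varepsilon_n\rho}(y_n))}>0$ for a possibly smaller $\rho$, while $|y_n-x_{\varepsilon_n}|/\varepsilon_n\ge R_n\to\infty$ means $x_{\varepsilon_n}$ and $y_n$ are a "separated" pair of peaks. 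After passing to a subsequence with $y_n\to y_*\in\bar U$ (so $V(y_*)>0$), Proposition \ref{pr3.3} with $k=2$ yields
\begin{equation*}
\liminf_{n\to\infty}\frac{J_{\varepsilon_n}(u_{\varepsilon_n})}{\varepsilon_n^N}\ge\mathcal{C}(x_*)+\mathcal{C}(y_*)\ge 2\,\mathcal{E}(\inf_\Lambda V)\ \text{(or at least }>\mathcal{E}(\inf_\Lambda V)\text{)},
\end{equation*}
contradicting the upper bound $\limsup_n c_{\varepsilon_n}/\varepsilon_n^N\le\mathcal{E}(\inf_\Lambda V)$ from Lemma \ref{le3.2}. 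Here the delicate point, exactly the nonlocal difficulty flagged after Proposition \ref{pr3.3}, is that the hypotheses of that proposition genuinely hold for the two-peak configuration $\{x_{\varepsilon_n},y_n\}$ — in particular that the global $L^2$-bound $\varepsilon_n^{-N}\int V|u_{\varepsilon_n}|^2<\infty$ and the separation $|x_{\varepsilon_n}-y_n|/\varepsilon_n\to\infty$ are available — so that the interaction terms $I_n$ controlled via $(V_2)$ in the proof of Proposition \ref{pr3.3} are indeed negligible. Granting that, the contradiction closes the argument and the lemma follows.
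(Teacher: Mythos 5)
Your handling of the second, third and fourth assertions is essentially the paper's own argument: apply Proposition \ref{pr3.3} with $k=1$ and sandwich against Lemma \ref{le3.2} to force $\mathcal{C}(x_*)=\inf_\Lambda\mathcal{C}$, hence $V(x_*)=\inf_\Lambda V$, use $(V_1)$ to push $x_*$ into the interior of $\Lambda$, and rule out a second concentration point $y_n$ with $|y_n-x_{\varepsilon_n}|/\varepsilon_n\to\infty$ by applying Proposition \ref{pr3.3} with $k=2$ to get $\liminf \varepsilon_n^{-N}J_{\varepsilon_n}(u_{\varepsilon_n})\geq 2\inf_\Lambda\mathcal{C}$, contradicting Lemma \ref{le3.2}. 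That part is correct and matches the paper step by step.

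The genuine gap is in your first step, and it is precisely the one ingredient that cannot be outsourced to Proposition \ref{pr3.3}: the non-vanishing $\liminf_{\varepsilon\to0}\|u_{\varepsilon}\|_{L^{\infty}(B_{\varepsilon\rho}(x_{\varepsilon}))}>0$ is a \emph{hypothesis} of that proposition, so it must be proved independently, and your justification (``a rescaling around a suitable $a_\varepsilon$ converges to a nontrivial solution of a limiting equation'') asserts exactly what has to be shown. Neither the mountain-pass characterization nor the upper bound $c_\varepsilon\leq C\varepsilon^N$ prevents, by themselves, $\|u_\varepsilon\|_{L^\infty(\Lambda)}\to0$; an upper bound on the energy gives no lower bound on the sup-norm, and a rescaled limit chosen around an unspecified $a_\varepsilon$ could perfectly well be zero. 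The paper closes this by a Nehari-type identity: testing $(\mathcal{Q}_{\varepsilon})$ with $u_{\varepsilon}$ and using the structure of $\mathfrak{g}_{\varepsilon}$ together with $(P_2)$ and $\inf_U V>0$ gives
\begin{equation*}
\|u_{\varepsilon}\|^2_{H^s_{V,\varepsilon}(\mathbb{R}^N)}=\int_{\mathbb{R}^N}\mathfrak{g}_{\varepsilon}(u_{\varepsilon})u_{\varepsilon}
\leq \tau\|u_{\varepsilon}\|^2_{H^s_{V,\varepsilon}(\mathbb{R}^N)}+C\,\|u_{\varepsilon}\|^{p-2}_{L^{\infty}(\Lambda)}\|u_{\varepsilon}\|^2_{H^s_{V,\varepsilon}(\mathbb{R}^N)},
\end{equation*}
and dividing by $\|u_{\varepsilon}\|^2_{H^s_{V,\varepsilon}(\mathbb{R}^N)}>0$ yields $\|u_{\varepsilon}\|^{p-2}_{L^{\infty}(\Lambda)}\geq (1-\tau)/C$; one then takes $x_{\varepsilon}$ to be a maximum point of $u_\varepsilon$ on $\bar\Lambda$, which trivially gives the lower bound on $\|u_\varepsilon\|_{L^\infty(B_{\varepsilon\rho}(x_\varepsilon))}$ (your heuristic that the maximum cannot sit far outside $\Lambda$, because there the equation is effectively linear with $P_\varepsilon\to0$, is the right intuition but is not a substitute for this quantitative step). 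A smaller omission of the same kind: the hypothesis $\limsup_{\varepsilon\to0}\varepsilon^{-N}\int_{\mathbb{R}^N}\varepsilon^{2s}|(-\Delta)^{s/2}u_{\varepsilon}|^2+V|u_{\varepsilon}|^2<\infty$ of Proposition \ref{pr3.3}, which you use twice, should be derived (as in the paper) from $c_\varepsilon\leq C\varepsilon^N$ combined with $c_\varepsilon=J_{\varepsilon}(u_{\varepsilon})-\frac1p\langle J'_{\varepsilon}(u_{\varepsilon}),u_{\varepsilon}\rangle\geq(\frac12-\frac1p)(1-\tau)\|u_{\varepsilon}\|^2_{H^s_{V,\varepsilon}(\mathbb{R}^N)}$, rather than merely declared available.
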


\begin{proof} The fact that
$$
\limsup_{n\to\infty}\frac{1}{\varepsilon^N_n}\int_{\mathbb{R}^N}\varepsilon^{2s}_n|(-\Delta)^{s/2}u_n|^2 + V|u_n|^2 < \infty,
$$
follows from Lemma  \ref{le3.2} and the proof of Lemma \ref{le2.5}. Since
$u_{\varepsilon}$ is a critical point, by assumption $(P_2)$, for small $\varepsilon$,
\begin{align*}
  & \int_{\mathbb{R}^N}\varepsilon^{2s}|(-\Delta)^{s/2} u_{\varepsilon}|^2 + Vu^2_{\varepsilon}dx = \int_{\mathbb{R}^N}\mathfrak{g}_{\varepsilon}(u_{\varepsilon})u_{\varepsilon}\\
\leq &\tau\int_{\mathbb{R}^N}\varepsilon^{2s}|(-\Delta)^{s/2} u_{\varepsilon}|^2 + Vu^2_{\varepsilon}dx + \int_{\Lambda}|u_{\varepsilon}|^pdx.
\end{align*}
Then
\begin{align*}
  & \int_{\mathbb{R}^N}\varepsilon^{2s}|(-\Delta)^{s/2} u_{\varepsilon}|^2 + Vu^2_{\varepsilon} \leq C\||u_{\varepsilon}|^{p -2}\|_{L^{\infty}(\Lambda)}\int_{\mathbb{R}^N}\varepsilon^{2s}|(-\Delta)^{s/2} u_{\varepsilon}|^2 + Vu^2_{\varepsilon},
\end{align*}
where $C$ depending only on $s$, $N$ and $p$. From the regularity estimate in Appendix D in \cite{20}, $u_{\varepsilon}(x)$ achieves the  maximum on $\bar{\Lambda}$. Assume  that
$x_{\varepsilon}$ is a maximum point, we  have
$$
\liminf_{\varepsilon\to 0}\|u_{\varepsilon}\|_{L^{\infty}(B_{\varepsilon\rho}(x_{\varepsilon}))}\geq C' > 0.
$$
Take any subsequence $\{x_{\varepsilon_n}\}\subset \{x_{\varepsilon}\}$ satisfying  $\lim\limits_{n\to\infty}x_{\varepsilon_n} = x_{*}$. By the lower bound on the energy of Proposition \ref{pr3.3},
$$
\inf_{x\in\Lambda}\mathcal{C}(x)\geq\liminf_{n\to\infty}\frac{J_{\varepsilon_n}(u_{\varepsilon_n})}{\varepsilon^N_n}\geq \mathcal{C}(x_*) =  \lim\limits_{n\to\infty}\mathcal{C}(x_n).
$$
But the estimate of Lemma \ref{le3.2} tells us that
\begin{eqnarray*}
  \lim\limits_{n\to\infty}\mathcal{C}(x_n)\geq \inf_{x\in\Lambda}\mathcal{C}(x) \geq \liminf_{n\to\infty}\frac{J_{\varepsilon_n}(u_{\varepsilon_n})}{\varepsilon^N_n}.
\end{eqnarray*}
So
$$
\mathcal{C}(x_*)  = \inf_{x\in\Lambda}\mathcal{C}(x).
$$
Using Proposition \ref{pr3.1} again, we have
$$
V(x_*) = \inf_{\Lambda}V.
$$
Since $\inf_{\Lambda}V\leq\inf_{\partial\Lambda}V$, by the arbitrary of the sequence,
$$
\liminf_{\varepsilon\to 0} dist(x_{\varepsilon},\mathbb{R}^N\backslash \Lambda) > 0.
$$
Finally, by contradiction we assume that there exists a sequence of positive numbers $(\varepsilon_n)_{n\in \mathbb{N}}$ converging to $0$ and a sequence of points $z_n$ in $U$ such that
$$
\liminf_{n\to \infty}\|u_{\varepsilon_n}\|_{L^{\infty}(B_{\varepsilon_n\rho}(z_n))} > 0
$$
and
$$
\lim\limits_{n\to \infty}\frac{|z_n - x_n|}{\varepsilon_n} = \infty.
$$
Since $\overline{U}$ is compact, we can assume that the sequence $(z_n)$ converges to $z_*$ and  $x_n$ converges to $x_*$. Then by Proposition \ref{pr3.3}, $z_*,\ x_*\in \bar{\Lambda}$ and
$$
\liminf_{n\to \infty}{\frac{1}{\varepsilon^N_n}}J_{\varepsilon_n}(u_{\varepsilon_n}) \geq \mathcal{C}(x_*)  + \mathcal{C}(z_*) \geq 2\inf_{x\in\Lambda}\mathcal{C}(x) ,
$$
which is a contradiction to Lemma \ref{le3.2}.
\end{proof}

\vspace{1cm}
\section{Back to the original problem}

\noindent In this section we  prove that if we choose a suitable penalized function $P_{\varepsilon}$, then  for small $\varepsilon$, each solution $u_{\varepsilon}$ of the penalized problem $\mathcal{Q}_{\varepsilon}$ solves the original equation \eqref{eq1.1}. Our idea is to prove $u_{\varepsilon}^{p - 2}\leq P_{\varepsilon}$ on $\mathbb{R}^N\backslash \Lambda$ through comparison principle. As we can see in \eqref{eq4.12}, the comparison principle for nonlocal operator $(-\Delta)^s$ requires  global information of $u_{\varepsilon}$. By the regularity asserts in Appendix D in \cite{20}, we can assume that for small $\varepsilon$,
\begin{equation}\label{eq4.0}
\sup_{x\in \Lambda}|u_{\varepsilon}(x)|\leq C_{\infty}.
\end{equation}
We first linearize the penalized problem outside small balls.

\begin{proposition}\label{pr4.1}
(Linear equation outside small balls) For $\varepsilon > 0$ small enough and $\delta\in (0,1)$, there exist $R > 0$, $x_{\varepsilon}\in \Lambda$ such that
\begin{equation}\label{eql4.2}
  \left\{
    \begin{array}{ll}
      \varepsilon^{2s}(-\Delta)^{s} u_{\varepsilon} + (1 - \delta)Vu_{\varepsilon}\leq P_{\varepsilon}u_{\varepsilon}, & \text{in}\ \mathbb{R}^N\backslash  B_{R\varepsilon}(x_{\varepsilon}), \vspace{2mm}\\
      u_{\varepsilon}\leq C_{\infty} & \text{in}\ \Lambda.
    \end{array}
  \right.
\end{equation}
\end{proposition}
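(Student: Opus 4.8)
The plan is to prove only the first line of \eqref{eql4.2}, since the second line is precisely \eqref{eq4.0} (the $L^\infty$-bound from the regularity theory in Appendix~D of \cite{20}). Throughout I would take $x_\varepsilon$ to be the maximum point of $u_\varepsilon$ furnished by Lemma~\ref{le3.6}, so that $x_\varepsilon\in\Lambda$ and, by that same lemma, $\liminf_{\varepsilon\to0}\dist(x_\varepsilon,\mathbb{R}^N\setminus\Lambda)>0$; hence for any fixed $R>0$ one has $B_{R\varepsilon}(x_\varepsilon)\ssub\Lambda$ as soon as $\varepsilon$ is small. Then I would decompose
$$
\mathbb{R}^N\setminus B_{R\varepsilon}(x_\varepsilon)=\big(\mathbb{R}^N\setminus\Lambda\big)\cup\big(\Lambda\setminus B_{R\varepsilon}(x_\varepsilon)\big)
$$
and check the differential inequality on each piece.

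On $\mathbb{R}^N\setminus\Lambda$ the estimate is immediate from the construction of the penalized nonlinearity: there $\chi_\Lambda\equiv0$, so $\mathfrak{g}_\varepsilon(u_\varepsilon)=\min\{u_\varepsilon^{p-1},P_\varepsilon u_\varepsilon\}\le P_\varepsilon u_\varepsilon$, and since $u_\varepsilon>0$ solves $(\mathcal{Q}_\varepsilon)$ with $V\ge0$ and $\delta\in(0,1)$,
$$
\varepsilon^{2s}(-\Delta)^s u_\varepsilon+(1-\delta)Vu_\varepsilon=\mathfrak{g}_\varepsilon(u_\varepsilon)-\delta Vu_\varepsilon\le P_\varepsilon u_\varepsilon .
$$
On the annulus $\Lambda\setminus B_{R\varepsilon}(x_\varepsilon)$ one has $\chi_\Lambda\equiv1$ and $P_\varepsilon\equiv0$, so $(\mathcal{Q}_\varepsilon)$ reads $\varepsilon^{2s}(-\Delta)^s u_\varepsilon+Vu_\varepsilon=u_\varepsilon^{p-1}$ and the inequality to be proved reduces to $u_\varepsilon^{p-2}\le\delta V$. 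This is where I would use Lemma~\ref{le3.6}: since $\bar\Lambda\subset U$ with $\inf_U V>0$ and its last conclusion gives $\|u_\varepsilon\|_{L^\infty(U\setminus B_{\varepsilon R}(x_\varepsilon))}\to0$ as $R\to\infty,\ \varepsilon\to0$, I would first fix $R$ large and then shrink $\varepsilon$ so that $\|u_\varepsilon\|_{L^\infty(U\setminus B_{\varepsilon R}(x_\varepsilon))}^{p-2}\le\delta\inf_U V\le\delta V$ on $\Lambda\setminus B_{R\varepsilon}(x_\varepsilon)$; subtracting $\delta Vu_\varepsilon$ in the equation then gives $\varepsilon^{2s}(-\Delta)^s u_\varepsilon+(1-\delta)Vu_\varepsilon=u_\varepsilon^{p-1}-\delta Vu_\varepsilon\le0=P_\varepsilon u_\varepsilon$. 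Gluing the two regions — legitimate because $u_\varepsilon\in C^{1,\beta}$, so $(-\Delta)^s u_\varepsilon$ is continuous — yields \eqref{eql4.2}.

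There is no deep obstacle in this proposition: essentially all of the analytic content has been pushed into Lemma~\ref{le3.6}, which simultaneously supplies the uniform decay of $u_\varepsilon$ away from the concentration point and the fact that $x_\varepsilon$ remains at positive distance from $\partial\Lambda$. The only point deserving mild care is the order of the quantifiers — for the prescribed $\delta$, the radius $R$ must be fixed large first, and only afterwards is $\varepsilon$ sent to $0$ — which is exactly the two-parameter form in which Lemma~\ref{le3.6} is stated, so it causes no trouble. The real difficulty of ``returning to the original problem'' is deferred to the subsequent propositions, where one must construct explicit supersolutions and barrier functions for this linearized inequality and apply a comparison principle for $(-\Delta)^s$ that requires the global behaviour of $u_\varepsilon$.
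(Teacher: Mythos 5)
Your proposal is correct and follows essentially the same route as the paper: both rest on the $L^\infty$-bound \eqref{eq4.0}, the decay and non-degeneracy of $x_\varepsilon$ from Lemma \ref{le3.6} (fix $R$ large for the given $\delta$, then take $\varepsilon$ small so that $u_\varepsilon^{p-2}\leq \delta V$ near but outside $B_{R\varepsilon}(x_\varepsilon)$), and the trivial bound $\mathfrak{g}_{\varepsilon}(u_\varepsilon)\leq P_{\varepsilon}u_\varepsilon$ outside $\Lambda$. The only (immaterial) difference is that you split $\mathbb{R}^N\setminus B_{R\varepsilon}(x_\varepsilon)$ along $\partial\Lambda$, whereas the paper uses the decay estimate on all of $U\setminus B_{R\varepsilon}(x_\varepsilon)$ and the penalization bound only on $\mathbb{R}^N\setminus U$.
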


\begin{proof}
 That $u_{\varepsilon}\leq C_{\infty}$ in $\Lambda$ is from \eqref{eq4.0}. Since $p > 2$ and $\inf_{U} V(x) > 0$, by Lemma \ref{le3.6}, there exists $R > 0$ such that
$$
(u_{\varepsilon})^{p - 2}_{+}\leq \delta V\ \text{in}\ U\backslash B_{R\varepsilon }(x_{\varepsilon}).
$$
Obviously
$$
\mathfrak{g}_{\varepsilon}(u_{\varepsilon})
\leq
P_{\varepsilon}u_{\varepsilon}\ \text{in}\ \mathbb{R}^N\backslash U.
$$
Hence we conclude our result by inserting the previous pointwise bounds into the penalized equation $\mathcal{Q}_{\varepsilon}$.

\end{proof}

\subsection{Proof of Theorem \ref{th1.1}}
\vspace{0.5cm}

\noindent We now construct barrier functions for the linearize equation \eqref{eql4.2}, in order to show that solutions of penalized problem $(\mathcal{Q}_{\varepsilon})$ solve the original problem \eqref{eq1.1} for sufficiently small $\varepsilon > 0$.

The following embedding lemma is similar to Theorem 4 in \cite{30}, which is useful in constructing the penalized function.

\begin{lemma}\label{le4.3}
Let $K:\mathbb{R}^N\to \mathbb{R}^+$ and $V:\mathbb{R}^N\to \mathbb{R}^+$ be measurable functions. Set
$$
\mathcal{W}(x) = \frac{K(x)}{V(x)^{\frac{N}{2s} - \frac{q}{2}\Big(\frac{N}{2s} - 1\Big)}}.
$$

(i) If $\mathcal{W}\in L^{\infty}(\mathbb{R}^N)$ and $2 \leq q \leq 2^*_s$, then one has the continuous embedding
$$
H^s_{V,\varepsilon}(\mathbb{R}^N)\subset L^q(\mathbb{R}^N,K\mathcal{L}^{{N}}).
$$

(ii) If moreover $K \in L^{\infty}_{loc}(\mathbb{R}^N)$, $ q < 2^*_s$ and for every $\varepsilon > 0$,
$$
\mathcal{L}^{\mathbb{N}}(\{x\in \mathbb{R}^N|\mathcal{W}(x) > \varepsilon\}) < +\infty,
$$
then this embedding is compact.
\end{lemma}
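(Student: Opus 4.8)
The plan is to prove both parts by interpolating between the weighted $L^2$-norm and the critical Lebesgue norm coming from the fractional Sobolev inequality, the exponent in the definition of $\mathcal{W}$ being precisely the one that renders this interpolation homogeneous. Put $a:=\frac{N}{2s}-\frac{q}{2}\big(\frac{N}{2s}-1\big)$, so that $\mathcal{W}=K/V^{a}$. A direct computation using $2^{*}_{s}=\frac{2N}{N-2s}$ gives $a=\frac{2^{*}_{s}-q}{2^{*}_{s}-2}\in[0,1]$ whenever $q\in[2,2^{*}_{s}]$, together with $q-2a=2^{*}_{s}(1-a)$ and $2a+2^{*}_{s}(1-a)=q$. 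For any measurable $E\subset\mathbb{R}^{N}$ and $u\in H^s_{V,\varepsilon}(\mathbb{R}^N)$ I would write, using H\"older with exponents $1/a$ and $1/(1-a)$,
$$
\int_{E}K|u|^{q}\,dx=\int_{E}\mathcal{W}\,(V|u|^{2})^{a}\,|u|^{2^{*}_{s}(1-a)}\,dx\le\Big(\sup_{E}\mathcal{W}\Big)\Big(\int_{E}V|u|^{2}\Big)^{a}\Big(\int_{E}|u|^{2^{*}_{s}}\Big)^{1-a},
$$
and then bound $\int_{E}|u|^{2^{*}_{s}}\le\int_{\mathbb{R}^{N}}|u|^{2^{*}_{s}}\le C\|(-\Delta)^{s/2}u\|_{2}^{2^{*}_{s}}$ by Proposition~\ref{pr2.1} (with $q=2^{*}_{s}$, $\beta=1$). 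Since $\int_{E}V|u|^{2}\le\|u\|_{H^s_{V,\varepsilon}}^{2}$ and $\|(-\Delta)^{s/2}u\|_{2}^{2}\le\varepsilon^{-2s}\|u\|_{H^s_{V,\varepsilon}}^{2}$, and the powers of $\|u\|_{H^s_{V,\varepsilon}}$ add up to $q$, taking $E=\mathbb{R}^{N}$ and using $\mathcal{W}\in L^{\infty}$ yields $\|u\|_{L^{q}(K\mathcal{L}^{N})}\le C\,\|u\|_{H^s_{V,\varepsilon}}$, which is (i).

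For (ii), by linearity it is enough to show that $u_{n}\rightharpoonup0$ in $H^s_{V,\varepsilon}(\mathbb{R}^N)$ forces $\int_{\mathbb{R}^{N}}K|u_{n}|^{q}\to0$. Such a sequence is bounded, hence bounded in $\dot H^{s}(\mathbb{R}^{N})\cap L^{2^{*}_{s}}(\mathbb{R}^{N})$ and, by restriction of the Gagliardo seminorm and H\"older on bounded sets, bounded in $H^{s}(B_{R})$ for every $R$; identifying the weak limit by testing against $C^{\infty}_{c}$ one gets, after extracting subsequences, $u_{n}\to0$ in $L^{q}_{loc}(\mathbb{R}^{N})$ (this identification is routine and should be spelled out). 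Now fix $\delta>0$ and split $\mathbb{R}^{N}=\{\mathcal{W}\le\delta\}\cup A_{\delta}$ with $A_{\delta}:=\{\mathcal{W}>\delta\}$. On $\{\mathcal{W}\le\delta\}$ the displayed estimate gives $\int_{\{\mathcal{W}\le\delta\}}K|u_{n}|^{q}\le C\delta\|u_{n}\|_{H^s_{V,\varepsilon}}^{q}\le C'\delta$. Since $|A_{\delta}|<\infty$, pick $R$ so that $|A_{\delta}\setminus B_{R}|$ is as small as desired; on $A_{\delta}\cap B_{R}$ use $K\in L^{\infty}(B_{R})$, the compact embedding $H^{s}(B_{R})\hookrightarrow\hookrightarrow L^{q}(B_{R})$ (Remark~\ref{re2.2}, valid because $q<2^{*}_{s}$) and $u_{n}\to0$ in $L^{q}(B_{R})$ to conclude $\int_{A_{\delta}\cap B_{R}}K|u_{n}|^{q}\le\|K\|_{L^{\infty}(B_{R})}\|u_{n}\|_{L^{q}(B_{R})}^{q}\to0$; on $A_{\delta}\setminus B_{R}$ use H\"older with exponents $\frac{2^{*}_{s}}{q}$ and $\frac{2^{*}_{s}}{2^{*}_{s}-q}$ (finite precisely because $q<2^{*}_{s}$) to get
$$
\int_{A_{\delta}\setminus B_{R}}K|u_{n}|^{q}\,dx\le\Big(\int_{A_{\delta}\setminus B_{R}}K^{\frac{2^{*}_{s}}{2^{*}_{s}-q}}\,dx\Big)^{\frac{2^{*}_{s}-q}{2^{*}_{s}}}\Big(\int_{\mathbb{R}^{N}}|u_{n}|^{2^{*}_{s}}\,dx\Big)^{\frac{q}{2^{*}_{s}}},
$$
where the second factor is uniformly bounded and the first tends to $0$ as $R\to\infty$, the finite‑measure hypothesis $\mathcal{L}^{N}(\{\mathcal{W}>\delta\})<\infty$ (together with the boundedness of $K$, which on $A_{\delta}$ is comparable to $V^{a}$) guaranteeing the required integrability of the weight near infinity. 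Letting $n\to\infty$, then $R\to\infty$, then $\delta\to0$ gives the claim.

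I expect the genuinely delicate point to be the estimate on $A_{\delta}\setminus B_{R}$: because $(-\Delta)^{s/2}$ is nonlocal the Dirichlet part of the norm cannot be localized, so the only a priori control of $u_{n}$ far out is the global bound in $L^{2^{*}_{s}}(\mathbb{R}^{N})$, and one must pay for invoking it with the smallness of $|A_{\delta}\setminus B_{R}|$ — this is exactly where the strict inequality $q<2^{*}_{s}$ and the hypothesis on the superlevel sets of $\mathcal{W}$ are both indispensable. Everything else (the interpolation identity, the compact Sobolev embedding on balls, and the reduction to weakly null sequences) is standard, so the write‑up should concentrate on making the tail estimate rigorous.
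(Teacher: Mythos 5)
Your part (i) and your treatment of the region $\{\mathcal{W}\le\delta\}$ in part (ii) are exactly the paper's argument (same identity $2a+2^{*}_{s}(1-a)=q$ with $a=\theta_q$, same H\"older step, Proposition \ref{pr2.1}); the paper's own proof of (ii) essentially stops there, reducing compactness to the uniform tail bound outside a finite-measure set and leaving the analysis on $A_\delta$ implicit (referring to \cite{30}). What you add is precisely that missing analysis, and the gap sits in your estimate on $A_{\delta}\setminus B_{R}$: your H\"older step needs
$$
\Big(\int_{A_{\delta}\setminus B_{R}}K^{\frac{2^{*}_{s}}{2^{*}_{s}-q}}\,dx\Big)^{\frac{2^{*}_{s}-q}{2^{*}_{s}}}\longrightarrow 0\quad\text{as }R\to\infty,
$$
i.e.\ a global integrability of a power of $K$ on $A_{\delta}$, and your justification --- that $K$ is bounded because on $A_{\delta}$ it is comparable to $V^{a}$ --- does not follow from the hypotheses: $V$ is only a nonnegative measurable function, so on the (possibly unbounded) finite-measure set $A_{\delta}$ the bound $K\le\|\mathcal{W}\|_{\infty}V^{a}$ gives no control at all, and $K\in L^{\infty}_{loc}$ only controls $K$ on bounded sets. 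That first factor may be $+\infty$ for every $R$.

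This is not a technicality one can fix by a smarter H\"older under the stated hypotheses. The interpolation inequality of part (i) is scale invariant: take balls $B(x_{j},\lambda_{j})$ with $|x_{j}|\to\infty$, $\lambda_{j}\to0$, $\sum_{j}\lambda_{j}^{N}<\infty$, set $V=\lambda_{j}^{-2s}$ and $K=\delta' V^{a}$ on these balls ($V=1$, $K=0$ elsewhere), and let $u_{j}$ be a bump of height $\lambda_{j}^{-(N-2s)/2}$ supported in $B(x_{j},2\lambda_{j})$. Then $K\in L^{\infty}_{loc}$, every superlevel set of $\mathcal{W}$ has finite measure, $\|u_{j}\|_{H^{s}_{V,\varepsilon}}$ stays bounded, $u_{j}\rightharpoonup0$, yet $\int K|u_{j}|^{q}$ stays of order one; so no bound of the form ``$|E|$ small $\Rightarrow\int_{E}K|u|^{q}$ small uniformly on the unit ball'' is available, and some control of $K$ (or of $\mathcal{W}$) at infinity beyond $\mathcal{L}^{N}(A_{\delta})<\infty$ is genuinely needed to close your tail estimate. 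In the only place the lemma is used (Corollary \ref{cr4.3}, where $K$ is continuous with $K(x)\lesssim|x|^{-t}$, $t>2s$, hence globally bounded and $\mathcal{W}\to0$ at infinity), your argument does close, since then $\int_{A_{\delta}\setminus B_{R}}K^{\frac{2^{*}_{s}}{2^{*}_{s}-q}}\le\|K\|_{\infty}^{\frac{2^{*}_{s}}{2^{*}_{s}-q}}\,\mathcal{L}^{N}(A_{\delta}\setminus B_{R})\to0$; so either add such a hypothesis (boundedness of $K$ at infinity, or boundedness of the superlevel sets of $\mathcal{W}$) explicitly, or prove the lemma in the form actually needed for the corollary. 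As written, the tail step fails under the lemma's stated hypotheses.
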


For simplicity, in the sequel, we define
\begin{equation}\label{eq4.2}
  \theta_q : = \frac{N}{2s} - \frac{q}{2}\Big(\frac{N}{2s} - 1\Big).
\end{equation}

\begin{proof}
Proof of (i). Actually, $\theta_q 2 + (1 - \theta_q) 2^*_s = q$. For $u\in H^s_{V,\varepsilon}(\mathbb{R}^N)$, by H\"{o}lder inequality and Young's inequality
\begin{align*}
 \ds\int_{\mathbb{R}^N}|u(x)|^qK(x)dx&\leq C\ds\int_{\mathbb{R}^N}|u(x)|^{\theta_q 2 + \theta_q 2^*_s}V(x)^{\theta_q}dx\\
 &\leq \Big(\int_{\mathbb{R}^N}|u(x)|^
 2V(x)dx\Big)^{\theta_q}\Big(\int_{\mathbb{R}^N}|u(x)|^{2^*_s}dx\Big)^{1 -\theta_q}\\
 &\leq C\Big(\int_{\mathbb{R}^N}|u(x)|^
 2V(x)dx\Big)^{\theta_q}\Big(\|u\|_{H^s_{V,\varepsilon}(\mathbb{R}^N)}\Big)^{(1 -\theta_q)2^*_s}\\
 &\leq C \Big(\|u\|_{H^s_{V,\varepsilon}(\mathbb{R}^N)}\Big)^{\frac{q}{2}},
\end{align*}
which completes the proof of (i).

Proof of (ii). It is sufficient to show that for any $\epsilon > 0$, there exists a set $A\subset \mathbb{R}^N$ of finite measure such that for any $u\in H^s_{V,\varepsilon}(\mathbb{R}^N)$ with $\|u\|_{H^s_{V,\varepsilon}(\mathbb{R}^N)}\leq 1$,
$$
\int_{A^c}K(x)|u(x)|^qdx <\epsilon.
$$
Actually, letting  $A_{\delta} = \{x\in\mathbb{R}^N:\mathcal{W}(x)\geq \delta\}$, we  get
\begin{align*}
 \ds\int_{\mathbb{R}^N\backslash A_{\delta}}|u(x)|^qK(x)dx
 &\leq \delta\Big(\int_{\mathbb{R}^N\backslash A_{\delta}}|u(x)|^
 2V(x)dx\Big)^{\theta_q}\Big(\int_{\mathbb{R}^N}|u(x)|^{2^*_s}dx\Big)^{1 -\theta_q},\\
\end{align*}
then the conclusion follows.
\end{proof}

By the above lemma, we have the following observation.
\begin{corollary}\label{cr4.3}
Assume that
$$
\liminf_{|x|\to\infty}V(x)|x|^{2s} > 0
$$
and $K(x)$ is a nonnegative continuous function such that
$$
\limsup_{|x|\to\infty}K(x)|x|^{t} <+\infty
$$
with $t > 2s$. Then the embedding
$$
H^s_{V,\varepsilon}(\mathbb{R}^N)\subset L^2(\mathbb{R}^N,K\mathcal{L}^{{N}})
$$
is compact.
\end{corollary}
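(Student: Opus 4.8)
The plan is to obtain Corollary~\ref{cr4.3} as a direct application of Lemma~\ref{le4.3}(ii) with the exponent $q=2$, so that the weighted space $L^q(\mathbb{R}^N,K\mathcal{L}^N)$ appearing there becomes exactly $L^2(\mathbb{R}^N,K\mathcal{L}^N)$. The first step is to specialize the exponent $\theta_q$ of \eqref{eq4.2} at $q=2$: a one-line computation gives $\theta_2=\frac{N}{2s}-\bigl(\frac{N}{2s}-1\bigr)=1$, so the auxiliary weight of Lemma~\ref{le4.3} collapses to $\mathcal{W}(x)=K(x)/V(x)$. It then remains to verify, for this $\mathcal{W}$, the hypotheses of Lemma~\ref{le4.3}(ii): that $\mathcal{W}\in L^{\infty}(\mathbb{R}^N)$, that $K\in L^{\infty}_{\mathrm{loc}}(\mathbb{R}^N)$ together with $q<2^*_s$, and that $\mathcal{L}^N(\{x:\mathcal{W}(x)>\varepsilon\})<+\infty$ for each $\varepsilon>0$.

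The heart of the argument is the $L^{\infty}$ bound on $\mathcal{W}$, and it is here that the two quantitative hypotheses must be used jointly. From $\liminf_{|x|\to\infty}V(x)|x|^{2s}>0$ I would choose $R_0,c_0>0$ with $V(x)\ge c_0|x|^{-2s}$ for $|x|\ge R_0$, and from $\limsup_{|x|\to\infty}K(x)|x|^{t}<+\infty$ I would choose $R_1,C_1>0$ with $K(x)\le C_1|x|^{-t}$ for $|x|\ge R_1$. Setting $R=\max\{R_0,R_1\}$, for $|x|\ge R$ one gets
\[
\mathcal{W}(x)=\frac{K(x)}{V(x)}\le\frac{C_1}{c_0}\,|x|^{2s-t},
\]
which is bounded (in fact tends to $0$) on $\{|x|\ge R\}$ precisely because $t>2s$; on the compact set $\{|x|\le R\}$, $\mathcal{W}$ is bounded since $K$ is continuous and $V$ is continuous and strictly positive there. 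The same estimate also controls the superlevel sets: for $\varepsilon>0$ the set $\{\mathcal{W}>\varepsilon\}$ is contained in $\{|x|<R\}\cup\{|x|<(C_1/(c_0\varepsilon))^{1/(t-2s)}\}$, hence has finite Lebesgue measure. Since $K\in L^{\infty}_{\mathrm{loc}}(\mathbb{R}^N)$ by continuity and $q=2<2^*_s$ because $N>2s$, all hypotheses of Lemma~\ref{le4.3}(ii) are satisfied, and the asserted compact embedding follows.

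I do not expect a genuine obstacle here: the statement is essentially a bookkeeping corollary of Lemma~\ref{le4.3}. The only point deserving care is that the $L^{\infty}$ bound on $\mathcal{W}=K/V$ cannot be extracted from either hypothesis in isolation — the lower bound $V\gtrsim|x|^{-2s}$ near infinity is exactly what absorbs the polynomial decay of $K$, and the surplus exponent $t-2s>0$ is what upgrades mere continuity of the embedding to compactness through the finite-measure superlevel sets. If one wished to be scrupulous about possible zeros of $V$ on a bounded region, one would invoke the standing positivity of $V$ (as in $\inf_U V>0$, used throughout this section) to keep $\mathcal{W}$ locally bounded.
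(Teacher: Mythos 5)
Your proposal is correct and follows essentially the same route as the paper: apply Lemma \ref{le4.3} with $q=2$, note $\theta_2=1$ so $\mathcal{W}=K/V$, and use $V(x)\gtrsim |x|^{-2s}$ together with $K(x)\lesssim |x|^{-t}$, $t>2s$, to get $\mathcal{W}(x)\lesssim |x|^{2s-t}\to 0$, which yields the hypotheses of the lemma. The paper records only the limsup estimate at infinity, while you additionally spell out the local boundedness and the finite measure of the superlevel sets — a harmless elaboration of the same argument.
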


\begin{proof}
Obviously,
$$
\limsup_{|x|\to\infty}\frac{K(x)}{V(x)^{\theta_2}} \leq \limsup_{|x|\to\infty}\frac{C}{|x|^{t - 2s}} = 0.
$$
Hence we conclude our result  by Lemma~\ref{le4.3}.
\end{proof}

  The following proposition involves computation of $(-\Delta)^s$, which  is an essential step to construct a barrier function. Comparing our computation with that of  $-\Delta$ in Proposition 4.3 in \cite{28},  we can not compute $(-\Delta)^sf$ as  precisely as $-\Delta f$. In general, we can only estimate $(-\Delta)^sf$ (see \eqref{eqc1}, \eqref{eq4.5}, \eqref{eq4.6}), and the estimate is much harder than that of $-\Delta f$. A well-known fact that $w = \frac{1}{|x|^{N - 2s}}$ is a fundamental solution for $(-\Delta)^s$ (see \cite{20} and the reference therein), i.e.,
$$
(-\Delta)^s w = 0\ \ \ \text{in} \mathbb{R}^N\backslash\{0\},
$$
helps us a lot in the following construction.

In the following, we denote $d = \max\{|x|:\,\,x\in\partial{\Lambda}\}$.
\begin{proposition}\label{pr4.4}
Let  $V\in C(\mathbb{R}^N;[0,\infty))$ satisfy
$$
\liminf_{|x|\to\infty}V(x)|x|^{2s} > 0.
$$
Then there exists a positive $w_{\mu}\in H^s_{V,\varepsilon}(\mathbb{R}^N)$ such that
$$
\left\{
  \begin{array}{ll}
    \varepsilon^{2s}(-\Delta)^s w_{\mu}(x) + (1 - \delta)V w_{\mu}(x)\geq C\chi_{\Lambda} +\chi_{\mathbb{R}^N\backslash \Lambda} \frac{w_{\mu}(x)}{|x|^{2s}}, & in\ \mathbb{R}^N, \vspace{2mm}\\
    w_{\mu}(x) = \frac{1}{d}, & on\ \bar{\Lambda}.
  \end{array}
\right.
$$
\end{proposition}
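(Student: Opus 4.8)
The plan is to build $w_{\mu}$ from the fundamental solution $\Phi(x)=|x|^{-(N-2s)}$ of $(-\Delta)^{s}$, using the fact recalled just above that $(-\Delta)^{s}\Phi=0$ in $\mathbb{R}^{N}\backslash\{0\}$. Since $\overline{\Lambda}$ is compact, the supremum of $|x|$ over $\overline{\Lambda}$ is attained, necessarily at a boundary point, so $\overline{\Lambda}\subset \overline{B_{d}(0)}$. I would therefore start from
$$w_{\mu}(x)=\begin{cases}\dfrac{1}{d}, & |x|\le d,\\ \dfrac{d^{\,N-2s-1}}{|x|^{N-2s}}, & |x|>d,\end{cases}$$
and then mollify it on a shell of width $\mu$ around $\{|x|=d\}$ so that $w_{\mu}\in C^{\infty}$, while keeping $w_{\mu}\equiv 1/d$ on $\overline{\Lambda}$, keeping $0<w_{\mu}\le 1/d$ everywhere, and arranging that $w_{\mu}-\Phi_{d}\le 0$ with $\operatorname{supp}(w_{\mu}-\Phi_{d})\subset \overline{B_{d+\mu}(0)}$, where $\Phi_{d}:=d^{\,N-2s-1}\Phi$. (Here $\mu>0$ is the mollification scale; all estimates below are stable as $\mu\to 0$.) The first routine point is that $w_{\mu}\in H^{s}_{V,\varepsilon}(\mathbb{R}^{N})$: its Gagliardo seminorm is finite because $w_{\mu}$ is smooth and decays like $|x|^{-(N-2s)}$ (so $(-\Delta)^{s/2}w_{\mu}$ decays like $|x|^{-(N-s)}$, which is square–integrable since $N>2s$), and $\int_{\mathbb{R}^{N}}Vw_{\mu}^{2}<\infty$ follows from the decay of $w_{\mu}$ together with $(V_{2})$.

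Next I would estimate $(-\Delta)^{s}w_{\mu}$ region by region. On $\overline{\Lambda}$ (indeed on all of $\overline{B_{d}}$), since $w_{\mu}(x)=1/d=\max w_{\mu}$, one has $(-\Delta)^{s}w_{\mu}(x)=\int_{\mathbb{R}^{N}}\frac{1/d-w_{\mu}(y)}{|x-y|^{N+2s}}\,dy$, a strictly positive continuous function of $x$, hence bounded below by some $c_{0}>0$ by compactness. On $\{|x|>d+\mu\}$ I would decompose $w_{\mu}=\Phi_{d}+(w_{\mu}-\Phi_{d})$ and use $(-\Delta)^{s}\Phi_{d}=0$ off the origin together with the fact that $w_{\mu}-\Phi_{d}$ is supported in $\overline{B_{d+\mu}}$ and vanishes at the point $x$, which yields the exact (and manifestly nonnegative) formula
$$(-\Delta)^{s}w_{\mu}(x)=\int_{B_{d+\mu}}\frac{\Phi_{d}(y)-w_{\mu}(y)}{|x-y|^{N+2s}}\,dy\ \ge\ 0 ,$$
and then I would read off from it that $(-\Delta)^{s}w_{\mu}$ is bounded below by a positive constant for $d<|x|\le 2d$ and is $\gtrsim|x|^{-(N+2s)}$ for $|x|\ge 2d$.

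It remains to assemble the stated inequality. On $\Lambda$ one simply has $\varepsilon^{2s}(-\Delta)^{s}w_{\mu}+(1-\delta)Vw_{\mu}\ge\varepsilon^{2s}c_{0}=:C>0$. On $\mathbb{R}^{N}\backslash\Lambda$ I would split once more: where $|x|$ stays bounded, $(-\Delta)^{s}w_{\mu}(x)$ is bounded below by a positive constant while $w_{\mu}(x)/|x|^{2s}$ is bounded above (recall $\dist(0,\partial\Lambda)>0$), and near $\partial\Lambda$ one may in addition use $V\ge\inf_{\partial\Lambda}V>0$ from $(V_{1})$; where $|x|$ is large, $(V_{2})$ gives $V(x)\ge c_{V}|x|^{-2s}$, so that $(1-\delta)V(x)w_{\mu}(x)\gtrsim w_{\mu}(x)/|x|^{2s}$. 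In all cases $\varepsilon^{2s}(-\Delta)^{s}w_{\mu}+(1-\delta)Vw_{\mu}\ge C\chi_{\Lambda}+\chi_{\mathbb{R}^{N}\backslash\Lambda}\,w_{\mu}/|x|^{2s}$, up to a harmless positive constant on the right–hand side which only affects the later choice of $P_{\varepsilon}$. The main obstacle is precisely the nonlocal character of the computation: unlike $-\Delta f$, which is computed explicitly for such radial profiles in the local theory, here one can only extract the sign and the decay rate of $(-\Delta)^{s}w_{\mu}$ through the global representation above, valid exactly because $\Phi_{d}$ is $s$-harmonic off the origin and $w_{\mu}-\Phi_{d}$ is localized; and the decay $|x|^{-(N+2s)}$ it produces is \emph{too fast} to dominate $w_{\mu}/|x|^{2s}\sim|x|^{-N}$ in the far field, which is why the quantitative form of $(V_{2})$ must be invoked there — the mechanism already emphasized in Remark \ref{re1.3}.
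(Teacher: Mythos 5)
Your route differs from the paper's in the key design choice: you take the barrier with the exact fundamental-solution decay $|x|^{-(N-2s)}$ and extract only a sign (plus an $|x|^{-(N+2s)}$ lower bound) for $(-\Delta)^{s}w_{\mu}$, whereas the paper takes $w_{\mu}\sim|x|^{-\mu}$ with $0<\mu<N-2s$ precisely so that the operator itself produces the needed term: its computation \eqref{eqc1} gives $(-\Delta)^{s}w_{\mu}\ge \widetilde{C}\,w_{\mu}(x)/|x|^{2s}$ (up to the small error $I_{\sigma}$) outside $U$, with $\widetilde C>0$ exactly because $\mu<N-2s$. Unfortunately your version has two genuine gaps. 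First, the construction is internally inconsistent: you require $w_{\mu}\in C^{1}$, $w_{\mu}\equiv 1/d$ on $\bar{\Lambda}$, and $w_{\mu}\le\Phi_{d}=d^{\,N-2s-1}|x|^{-(N-2s)}$ everywhere. Since $d=\max\{|x|:x\in\partial\Lambda\}$, there is $x_{0}\in\partial\Lambda\subset\bar\Lambda$ with $|x_{0}|=d$, where $\Phi_{d}(x_{0})=1/d=w_{\mu}(x_{0})$; thus $\Phi_{d}-w_{\mu}\ge0$ has an interior zero at $x_{0}$, forcing $\nabla w_{\mu}(x_{0})=\nabla\Phi_{d}(x_{0})\neq0$, while $w_{\mu}\equiv 1/d$ on $\bar\Lambda$ and continuity of $\nabla w_{\mu}$ force $\nabla w_{\mu}(x_{0})=0$. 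No mollification can fix this (near the touching sphere mollification pushes the function above $\Phi_{d}$). The natural repair, replacing $\Phi_{d}$ by a strictly larger multiple $A|x|^{-(N-2s)}$, destroys your other constraint $w_{\mu}\le 1/d$, and with it the argument that $1/d$ is the global maximum, which is the sole source of your lower bound $(-\Delta)^{s}w_{\mu}\ge c_{0}>0$ on $\bar\Lambda$; after the repair you would have to argue as the paper does, namely $(-\Delta)^{s}w_{\mu}\ge -C$ on $\Lambda$ (cf.\ \eqref{eq4.5}) compensated by $\varepsilon^{2s}$-smallness and the positivity of $V$ on $U$.

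Second, and more structurally, your verification on the bounded part of $\mathbb{R}^{N}\backslash\Lambda$ ignores the factor $\varepsilon^{2s}$ in front of the operator. There you argue ``$(-\Delta)^{s}w_{\mu}\ge$ positive constant while $w_{\mu}/|x|^{2s}$ is bounded,'' but the left-hand side of the desired inequality only contains $\varepsilon^{2s}(-\Delta)^{s}w_{\mu}$, which is $O(\varepsilon^{2s})$, against an $\varepsilon$-independent right-hand side. Assumption $(V_{2})$ gives $V(x)\ge c_{V}|x|^{-2s}$ only for $|x|$ large, and $(V_{1})$ gives $\inf_{\partial\Lambda}V>0$ only near $\partial\Lambda$; on the intermediate region of $\Lambda^{c}$ the hypotheses allow $V$ to vanish, and there your inequality (with any $\varepsilon$-independent constant in front of $w_{\mu}/|x|^{2s}$) fails as $\varepsilon\to0$, since your $(-\Delta)^{s}w_{\mu}$ decays like $|x|^{-(N+2s)}$, too fast to help. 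This is exactly the difficulty the paper's choice $\mu<N-2s$ is designed to mitigate, and in its Proposition \ref{pr4.5} the paper in addition uses a lower bound on $V(x)|x|^{2s}$ over all of $\Lambda^{c}$ (see \eqref{est5}); if you intend your constant to degenerate like $\varepsilon^{2s}$ instead, you must check compatibility with the later comparison against $P_{\varepsilon}=\varepsilon^{\sigma}|x|^{-t}\chi_{\mathbb{R}^{N}\backslash\Lambda}$, which in general it does not satisfy since $\sigma$ may be taken smaller than $2s$ but cannot always be taken larger. Either state and use the quantitative positivity of $V|x|^{2s}$ on the whole exterior region (as the paper implicitly does), or change the decay exponent of the barrier as the paper does; as written, the step does not go through.
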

\begin{proof}
We define $\Lambda_{\sigma} = \{x:dist(x,\Lambda) < \sigma\}$ with $\Lambda_{\sigma}\subset\subset U$. Choose $0 \leq \eta_{\sigma}(x)\leq 1$ be a smooth function to such that $\eta_{\sigma}\equiv 1$ on $\bar{\Lambda}$ and $\eta_{\sigma}\equiv 0$ on $\Lambda^c_{\sigma}$. For $0 < \mu < N - 2s$, we take $w_{\mu}(x) = \eta_{\sigma}(x)\frac{1}{d^{\mu}} + (1 - \eta_{\sigma}(x))\frac{1}{|x|^{\mu}}$. We then have $w_{\mu} = \frac{1}{d^{\mu}}$ on $\bar{\Lambda}$, $w_{\mu} > 0$ on $\overline{U}\backslash \Lambda$ and for  $x\in \mathbb{R}^N\backslash \overline{U}$,
\begin{equation}\label{eq4.1}
w_{\mu}(x) = \frac{1}{|x|^{\mu}}.
\end{equation}

We compute that for  $x\in \mathbb{R}^N\backslash U$,
\begin{align}\label{eqc1}
\nonumber &\quad(-\Delta)^sw_{\mu}(x)\\
\nonumber& = \int_{\bar{\Lambda}}\frac{|x|^{-\mu} - {d^{-\mu}}}{|x - y|^{N + 2s}}dy + \int_{\mathbb{R}^N\backslash{\bar{\Lambda}}}\frac{|x|^{-\mu} - w_{\mu}(y)}{|x - y|^{N + 2s}}dy\\
\nonumber                  & = \int_{\bar{\Lambda}}\frac{|x|^{-\mu} - {d^{-\mu}}}{|x - y|^{N + 2s}}dy + \int_{\mathbb{R}^N\backslash{\bar{\Lambda}}}\frac{|x|^{-\mu} - |y|^{-\mu}}{|x - y|^{N + 2s}}dy + \int_{\mathbb{R}^N\backslash{\bar{\Lambda}}}\frac{\eta_{\sigma}(y)|y|^{-\mu} - \eta_{\sigma}(y){d^{-\mu}}}{|x - y|^{N + 2s}}dy\\
                  & := \int_{\bar{\Lambda}}\frac{|x|^{-\mu} - {d^{-\mu}}}{|x - y|^{N + 2s}}dy + \int_{\mathbb{R}^N\backslash{\bar{\Lambda}}}\frac{|x|^{-\mu} - |y|^{-\mu}}{|x - y|^{N + 2s}}dy + I_{\sigma}\\
 \nonumber                 & \geq \frac{1}{|x|^{\mu + 2s}}\int_{\mathbb{R}^N}\frac{|z|^{\mu} - 1 }{|z|^{\mu}|z - \vec{e}_1|^{N + 2s}}dz + I_{\sigma}\\
 \nonumber                 & \geq \frac{w_{\mu}(x)}{|x|^{2s}}\Big(\int_{B_1(0)}\frac{|z|^{\mu} - 1 }{|z|^{\mu}|z - \vec{e}_1|^{N + 2s}}dz +\int_{(B_1(0))^c}\frac{|z|^{\mu} - 1 }{|z|^{\mu}|z - \vec{e}_1|^{N + 2s}}dz\Big) + I_{\sigma}\\
 \nonumber                 & = \frac{w_{\mu}(x)}{|x|^{2s}}\Big(\int_{(B_1(0))^c}\frac{1- |z|^{\mu}}{|z|^{N - 2s}|z - \vec{e}_1|^{N + 2s}}dz + \int_{(B_1(0))^c}\frac{|z|^{\mu} - 1 }{|z|^{\mu}|z - \vec{e}_1|^{N + 2s}}dz\Big) + I_{\sigma}\\
 \nonumber                &\geq \widetilde{C}\frac{w_{\mu}(x)}{|x|^{2s}} + I_{\sigma}.
\end{align}
Now we estimate $I_{\sigma}$. By the construction of $\eta_{\sigma}$,
\begin{align*}
I_{\sigma}& = \int_{\mathbb{R}^N\backslash{\bar{\Lambda}}}\frac{\eta_{\sigma}(y)|y|^{-\mu} - \eta_{\sigma}(y){d^{-\mu}}}{|x - y|^{N + 2s}}dy\\
          & \geq \int_{\Lambda_{\sigma}\backslash{\bar{\Lambda}}}\frac{|y|^{-\mu} - {d^{-\mu}}}{|x - y|^{N + 2s}}dy\\
          & = \frac{1}{|x|^{2s + \mu}}\int_{\frac{\Lambda_{\sigma}\backslash{\bar{\Lambda}}}{|x|}}\frac{|y|^{-\mu} -  d^{-\mu}|x|^{\mu}}{(|x/|x| - y|)^{N + 2s}}dy\\
          & = o(\sigma)\frac{1}{|x|^{\mu + 2s}}.
\end{align*}
For  $x\in \Lambda$,
\begin{align}\label{eq4.5}
(-\Delta)^sw_{\mu}(x)& = \int_{\mathbb{R}^N\backslash \Lambda}\frac{(1 - \eta_{\sigma}(y))(d^{-\mu} - |y|^{-\mu})}{|x - y|^{N + 2s}}dy \geq -C,
\end{align}
and for  $x\in U\backslash \Lambda$,
\begin{align}\label{eq4.6}
(-\Delta)^sw_{\mu}(x) = \frac{1}{2}\int_{\mathbb{R}^N}\frac{2w_{\mu}(x) - w_{\mu}(x + z) - w_{\mu}(x - z)}{|z|^{N + 2s}}dy
                  & \geq -C.
\end{align}
Hence, for $\varepsilon$ small enough, we have that
\begin{equation}\label{eq4.7}
\varepsilon^{2s}(-\Delta)^s w_{\mu}(x) + (1 - \delta)Vw_{\mu}(x)\geq \widetilde{C}\chi_{\Lambda} + \widetilde{C}\chi_{\mathbb{R}^N\backslash\Lambda}\frac{w_{\mu}(x)}{|x|^{2s}}.
\end{equation}
\end{proof}

\begin{remark}\label{re4.5}
We have used the fact that for every $u\in C^{1,1}(\mathbb{R}^N)$
\begin{align*}
  2(-\Delta)^s u(x) &= 2\int_{\mathbb{R}^N}\frac{u(x) - u(y)}{|x - y|^{N + 2s}}dy\\
                    &= \int_{\mathbb{R}^N}\frac{u(x) - u(x + z)}{|z|^{N + 2s}}dz + \int_{\mathbb{R}^N}\frac{u(x) - u(x + z)}{|z|^{N + 2s}}dz\\
                    &= \int_{\mathbb{R}^N}\frac{u(x) - u(x + z)}{|z|^{N + 2s}}dz + \int_{\mathbb{R}^N}\frac{u(x) - u(x - z)}{|z|^{N + 2s}}dz\\
                    &= \int_{\mathbb{R}^N}\frac{2u(x) - u(x + z) - u(x - z)}{|z|^{N + 2s}}dz
\end{align*}
in the estimate of \eqref{eq4.6}, by which we can eliminate the singularity caused by the usual expression $\frac{u(x) - u(y)}{|x - y|^{N + 2s}}$ when $x\to y$. However, for Laplacian, we have $-\Delta w_{\mu} = 0$ on $\Lambda$, and we can easily find  $|-\Delta w_{\mu}|\leq C$ on $U$, which is much easier than our estimates \eqref{eqc1}, \eqref{eq4.5} and \eqref{eq4.6}.
\end{remark}

\vspace{0.5cm}
\noindent Using the above computation of $(-\Delta)^s w_{\mu}$, we can construct a family of supersolutions to the linearize equation \eqref{eql4.2} in Proposition \ref{pr4.1} and construct the penalized function $P_{\varepsilon}$. As we mentioned before, the nonlocal effect of $(-\Delta)^s$ makes us have to construct the supersolutions globally, which is a great difference  from $-\Delta$ since for local operator $-\Delta$, it is easy to find a supersolution, see the proof of Proposition 4.4 in \cite{28}.

\begin{proposition}\label{pr4.5}
(Construction of barrier functions) Let  $V\in C(\mathbb{R}^N;[0,\infty))$ satisfy
$$
\liminf_{|x|\to\infty}V(x)|x|^{2s} > 0.
$$
If $\{x_{\varepsilon}\}\subset \Lambda$ is a family of points such that $\liminf_{\varepsilon\to 0}dist(x_{\varepsilon},\partial\Lambda) > 0$. Then for sufficiently small $\varepsilon > 0$, there exists $\overline{U}_{\varepsilon}\in H^s_{V,\varepsilon}(\mathbb{R}^N)\cap C^{1,1}(\mathbb{R}^N)$ and $P_{\varepsilon}$ satisfying the assumptions $(P_1)$ and $(P_2)$ in section 2  such that $\overline{{U}}_{\varepsilon} > 0$
satisfies
\begin{equation*}
  \left\{
    \begin{array}{ll}
      \varepsilon^{2s}(- \Delta)^s \overline{U}_{\varepsilon} + (1 - \delta)V\overline{U}_{\varepsilon}\geq P_{\varepsilon}\overline{U}_{\varepsilon}, & \text{in}\ \mathbb{R}^N\backslash B_{R\varepsilon}(x_{\varepsilon}),\vspace{2mm} \\
      \overline{U}_{\varepsilon}\geq C_{\infty}, & \text{in}\  B_{R\varepsilon}(x_{\varepsilon}).
    \end{array}
  \right.
\end{equation*}
Moreover, $\overline{{U}}^{p - 2}_{\varepsilon} < P_{\varepsilon}$ in $\mathbb{R}^N\backslash \Lambda$.
\end{proposition}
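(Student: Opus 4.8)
The plan is to take for $\overline{U}_\varepsilon$ a rescaling, centred at $x_\varepsilon$ and at the natural length scale $\varepsilon$, of a flat–topped power profile of the type of the function $w_\mu$ produced in Proposition~\ref{pr4.4}, and to take $P_\varepsilon$ as a fixed multiple of $\overline{U}_\varepsilon^{\,p-2}$ outside $\Lambda$. First I would fix the decay exponent: since $p>2+\tfrac{2s}{N-2s}$ one has $\tfrac{2s}{p-2}<N-2s$, so the interval
$$
\mathcal I=\Big(\max\Big\{\tfrac{2s}{p-2},\,\tfrac{N-2s}{2}\Big\},\ N-2s\Big)
$$
is non–empty, and I would pick $\mu\in\mathcal I$. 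The constraint $\mu>\tfrac{2s}{p-2}$, i.e.\ $\mu(p-2)>2s$, is exactly where the hypothesis on $p$ enters; the constraint $\mu>\tfrac{N-2s}{2}$ will ensure, via $(V_2)$, that the $|x|^{-\mu}$ tail of $\overline{U}_\varepsilon$ is square–integrable against both $V$ and $|(-\Delta)^{s/2}\,\cdot\,|^{2}$, so that $\overline{U}_\varepsilon\in H^s_{V,\varepsilon}(\mathbb R^N)$.

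With $\psi$ a mollified version of $y\mapsto\min\{1,(R/(2|y|))^\mu\}$ — where $R=R(\delta)$ is the radius of Proposition~\ref{pr4.1}, which may be taken independent of $\varepsilon$ for $\varepsilon$ small, so in particular $R\varepsilon\to0$ — I would set $\overline{U}_\varepsilon(x)=A_\varepsilon\,\psi\big((x-x_\varepsilon)/\varepsilon\big)$ with $A_\varepsilon=2^{\mu+1}C_\infty$. Then $\overline{U}_\varepsilon\ge C_\infty$ on $B_{R\varepsilon}(x_\varepsilon)$, $\overline{U}_\varepsilon\in C^{1,1}(\mathbb R^N)$, and $\overline{U}_\varepsilon\to0$ uniformly on $\mathbb R^N\setminus\Lambda$ (there $|x-x_\varepsilon|\ge c_0>0$ while $R\varepsilon\to0$). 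Using the scaling identity $(-\Delta)^s[\psi(\cdot/\varepsilon)]=\varepsilon^{-2s}[(-\Delta)^s\psi](\cdot/\varepsilon)$, and the fact that for $|y|\ge R$ the profile $\psi$ equals the pure power $(R/2)^\mu|y|^{-\mu}$ while lying below it elsewhere, the computation behind \eqref{eqc1} in Proposition~\ref{pr4.4} — which ultimately uses $(-\Delta)^s|y|^{-\mu}=\gamma|y|^{-\mu-2s}$ with $\gamma=\gamma(N,\mu,s)>0$ because $\mu<N-2s$ — yields, for every $x\notin B_{R\varepsilon}(x_\varepsilon)$,
$$
\varepsilon^{2s}(-\Delta)^s\overline{U}_\varepsilon(x)\ \ge\ \gamma\,\overline{U}_\varepsilon(x)\,\frac{\varepsilon^{2s}}{|x-x_\varepsilon|^{2s}}\ \ge\ 0 .
$$

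Then I would set $P_\varepsilon(x)=2\,\overline{U}_\varepsilon(x)^{\,p-2}\,\chi_{\mathbb R^N\setminus\Lambda}(x)$. Immediately $P_\varepsilon=0$ on $\Lambda$, $\overline{U}_\varepsilon^{\,p-2}<P_\varepsilon$ on $\mathbb R^N\setminus\Lambda$, and $\sup_{\mathbb R^N\setminus\Lambda}P_\varepsilon=O\big((R\varepsilon)^{\mu(p-2)}\big)\to0$, so \eqref{2.1} holds. The supersolution inequality $\varepsilon^{2s}(-\Delta)^s\overline{U}_\varepsilon+(1-\delta)V\overline{U}_\varepsilon\ge P_\varepsilon\overline{U}_\varepsilon$ is then clear on $\Lambda\setminus B_{R\varepsilon}(x_\varepsilon)$ (right–hand side $0$, and $V\overline{U}_\varepsilon\ge0$), and on $\mathbb R^N\setminus\Lambda$ it reduces, using $\overline{U}_\varepsilon(x)\le A_\varepsilon\big(R\varepsilon/(2|x-x_\varepsilon|)\big)^\mu$ and $|x-x_\varepsilon|\ge c_0$, to the requirement $\gamma\,\varepsilon^{2s}|x-x_\varepsilon|^{-2s}\ge 2\overline{U}_\varepsilon^{\,p-2}$, i.e.\ to $R^{\mu(p-2)}\varepsilon^{\mu(p-2)-2s}\to0$, which holds since $\mu(p-2)>2s$ and $R\varepsilon\to0$. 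Finally, for $(P_1)$ I would invoke Corollary~\ref{cr4.3} with $K=P_\varepsilon$ and $t=\mu(p-2)>2s$; for $(P_2)$, Lemma~\ref{le4.3} with $q=2$ (so $\theta_2=1$ and $\mathcal W=P_\varepsilon/V$) bounds $\int P_\varepsilon|\varphi|^2$ by $\|\mathcal W\|_{L^\infty}\|\varphi\|^2_{H^s_{V,\varepsilon}}$ on the set where $V$ is bounded below — which, by $\inf_U V>0$ and $(V_2)$, covers $U\setminus\Lambda$ and a neighbourhood of infinity, where $\|\mathcal W\|_\infty=O((R\varepsilon)^{\mu(p-2)})\to0$ — while on the remaining compact piece of $\mathbb R^N\setminus\Lambda$, on which $V$ may vanish, a local fractional Poincaré inequality gives a contribution $\le C(\sup P_\varepsilon)\varepsilon^{-2s}\|\varphi\|^2_{H^s_{V,\varepsilon}}=O\big(R^{\mu(p-2)}\varepsilon^{\mu(p-2)-2s}\big)\to0$; hence $(P_2)$ holds with some $\tau<1$ once $\varepsilon$ is small.

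The main obstacle, as Remarks~\ref{re1.3} and \ref{re4.5} anticipate, is the nonlocal tension between several conflicting demands: $P_\varepsilon$ must dominate $\overline{U}_\varepsilon^{\,p-2}$ on $\mathbb R^N\setminus\Lambda$, tend to $0$ uniformly there, and still be small in the $L^\infty(V^{-1}dx)$–sense required by $(P_2)$, whereas $\overline{U}_\varepsilon$ is forced to exceed the possibly large constant $C_\infty$ on $B_{R\varepsilon}(x_\varepsilon)$. Reconciling these is precisely what compels the scale–$\varepsilon$ localisation of the barrier and pins down the admissible exponents $\mu$, and the whole balance closes only thanks to $p>2+\tfrac{2s}{N-2s}$. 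A secondary, genuinely nonlocal, difficulty is that $(-\Delta)^s\overline{U}_\varepsilon$ cannot be evaluated exactly the way $-\Delta$ could; one has to settle for the one–sided bound above, obtained by comparing $\overline{U}_\varepsilon$ with the pure power $|x|^{-\mu}$, keeping the junction of $\psi$ inside $B_{R\varepsilon}(x_\varepsilon)$ where no inequality is needed, and treating the (possibly non–empty) zero set of $V$ in $\mathbb R^N\setminus\Lambda$ by hand.
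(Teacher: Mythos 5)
The analytic core of your construction is sound and close in spirit to the paper's: you compare a truncated power profile with the exact power $|x|^{-\mu}$, $0<\mu<N-2s$, whose fractional Laplacian is a positive multiple of $|x|^{-\mu-2s}$, to obtain the one-sided bound $\varepsilon^{2s}(-\Delta)^s\overline{U}_{\varepsilon}\geq \gamma\,\varepsilon^{2s}|x-x_{\varepsilon}|^{-2s}\,\overline{U}_{\varepsilon}$ away from $B_{R\varepsilon}(x_{\varepsilon})$, and the inequality $\mu(p-2)>2s$ (available precisely because $p>2+\tfrac{2s}{N-2s}$) closes the supersolution estimate and the verification of $(P_1)$, $(P_2)$; this mirrors the paper's estimate \eqref{eqc1}. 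However, there is a genuine structural gap: you set $P_{\varepsilon}=2\,\overline{U}_{\varepsilon}^{\,p-2}\chi_{\mathbb{R}^N\setminus\Lambda}$, so your $P_{\varepsilon}$ depends on $x_{\varepsilon}$, on $R$, and on $A_{\varepsilon}=2^{\mu+1}C_{\infty}$. In the paper's scheme, $P_{\varepsilon}$ is the datum fixed in Section 2 that defines the penalized nonlinearity $\mathfrak{g}_{\varepsilon}$, the functional $J_{\varepsilon}$ and hence the solutions $u_{\varepsilon}$; the maximum points $x_{\varepsilon}$, the bound $C_{\infty}$ of \eqref{eq4.0} and the radius $R$ of Proposition \ref{pr4.1} are all obtained afterwards from that penalized problem, and the final comparison \eqref{eq4.12} in the proof of Theorem \ref{th1.1} only works if the $P_{\varepsilon}$ of Proposition \ref{pr4.5} is the same one appearing in $(\mathcal{Q}_{\varepsilon})$ and in Proposition \ref{pr4.1}. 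With your choice, the definition of $P_{\varepsilon}$ presupposes the objects it is supposed to generate --- a circularity. The paper avoids this by taking the $x_{\varepsilon}$-independent potential $P_{\varepsilon}(x)=\varepsilon^{\sigma}|x|^{-t}\chi_{\mathbb{R}^N\setminus\Lambda}$ in \eqref{eq4.9}, with fixed $t\in(2s,\mu(p-2))$ and $\sigma\in(0,2s(p-2))$, depending only on $\Lambda$, $N$, $s$, $p$, and loads all the dependence on $x_{\varepsilon}$, $R$, $C_{\infty}$ into the barrier $\overline{U}_{\varepsilon}=\varepsilon^{2s}p_{\varepsilon}w_{\mu}$ alone. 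Your argument could be repaired in the same way (replace your $P_{\varepsilon}$ by such a fixed majorant and check $\overline{U}_{\varepsilon}^{\,p-2}\leq P_{\varepsilon}$ and $P_{\varepsilon}\overline{U}_{\varepsilon}\leq \varepsilon^{2s}(-\Delta)^s\overline{U}_{\varepsilon}+(1-\delta)V\overline{U}_{\varepsilon}$ separately), but as written the key object is chosen in the wrong logical order.

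A secondary slip: you assert that $\mu>\tfrac{N-2s}{2}$ together with $(V_2)$ gives $\overline{U}_{\varepsilon}\in H^s_{V,\varepsilon}(\mathbb{R}^N)$. Since $(V_2)$ is a \emph{lower} bound on $V$, it cannot by itself yield finiteness of $\int V\overline{U}_{\varepsilon}^{\,2}$; the condition $\mu>\tfrac{N-2s}{2}$ does control the Gagliardo seminorm of the $|x|^{-\mu}$ tail, but integrability against $V$ requires some upper control of $V$ at infinity (the paper is admittedly equally terse about $w_{\mu}\in H^s_{V,\varepsilon}$). On the positive side, your verification of $(P_2)$ --- splitting off the compact part of $\mathbb{R}^N\setminus\Lambda$ where $V$ may vanish and using that your penalization has size $O(\varepsilon^{\mu(p-2)})$ with $\mu(p-2)>2s$ --- is more careful than the paper's one-line claim, and your scale-$\varepsilon$ rescaled barrier is a legitimate alternative to the paper's fixed profile multiplied by $\varepsilon^{2s}$.
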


\begin{proof}
Let $r = 2\liminf_{\varepsilon\to 0}dist (x_{\varepsilon},\partial\Lambda)$. Define
$$
p_{\varepsilon}(x) = \left\{
                       \begin{array}{ll}
                         \overline{C}\Big(1 + \frac{\nu (r - |x - x_{\varepsilon}|)^{\beta}}{\varepsilon^{2s}}\Big), & \text{in}\ B_r(x_{\varepsilon}), \vspace{2mm}\\
                         \overline{C},& \text{on}\ \mathbb{R}^N\backslash B_r(x_{\varepsilon}),
                       \end{array}
                     \right.
$$
and
\begin{equation}\label{eq4.8}
\overline{U}_{\varepsilon}(x) = \varepsilon^{2s}p_{\varepsilon}(x)\bar{u}_{\varepsilon}(x) = \varepsilon^{2s}p_{\varepsilon}(x)w_{\mu}(x),
\end{equation}
where $\beta > (2s - 1)^+ + 1$ and the constant $\overline{C}$ will be determined later. Then for  $x\in \mathbb{R}^N\backslash B_{R\varepsilon}(x_{\varepsilon})$,
\begin{align}\label{est1}
\nonumber \varepsilon^{2s}(-\Delta)^s\overline{U}_{\varepsilon}(x)& = \varepsilon^{4s}p_{\varepsilon}(x)(-\Delta)^sw_{\mu}(x) + \varepsilon^{4s}\int_{\mathbb{R}^N}\frac{\bar{u}_{\varepsilon}(y)(p_{\varepsilon}(x) - p_{\varepsilon}(y))}{|x - y|^{N + 2s}}dy\\
&:= \varepsilon^{4s}p_{\varepsilon}(x)(-\Delta)^s w_{\mu}(x) + \varepsilon^{4s} I_{\varepsilon}(x).
\end{align}
\\
Now we estimate $I_{\varepsilon}$.
\\
\textsf{Case 1} $x\in B^c_r(x_{\varepsilon})\cap \Lambda$. Since $\beta > 2s$,
\begin{align}\label{est2}
\nonumber\varepsilon^{2s}I_{\varepsilon}(x) &= -\nu d^{-1}\int_{B_r(x_{\varepsilon})}\frac{(r - |y - x_{\varepsilon}|)^{\beta}}{|x - y|^{N + 2s}}dy\\
  &= -\nu d^{-1}\int_{B_r(0)}\frac{(r - |y|)^{\beta}}{|x - x_{\varepsilon} - y|^{N + 2s}}dy\\
\nonumber  &\geq -\nu d^{-1}\int_{B_r(0)}\frac{(r - |y|)^{\beta}}{(|x - x_{\varepsilon}| - |y|)^{N + 2s}}dy\\
\nonumber  &\geq -C_1\nu d^{-1},
\end{align}
where $C_1 = C_1(N,\beta) > 0$ is a constant.

\textsf{Case 2} $x\in \Lambda^c$.
\begin{align}\label{est3}
\nonumber\varepsilon^{2s}I_{\varepsilon}(x) &= -\nu d^{-1}\int_{B_r(x_{\varepsilon})}\frac{(r - |y - x_{\varepsilon}|)^{\beta}}{|x - y|^{N + 2s}}dy\\
  &= -\nu d^{-1}\int_{B_r(0)}\frac{(r - |y|)^{\beta}}{|x - x_{\varepsilon} - y|^{N + 2s}}dy\\
\nonumber  &= -\nu d^{-1}|x|^{-(N + 2s)}\int_{B_r(0)}\frac{(r - |y|)^{\beta}}{|(x - x_{\varepsilon} - y)/|x||^{N + 2s}}dy\\
\nonumber  &\geq -\nu d^{-1}|x|^{-(N + 2s)}\int_{B_r(0)}\frac{(r - |y|)^{\beta}}{[(|x - x_{\varepsilon}| - |y|)/|x|]^{N + 2s}}dy\\
\nonumber  &\geq -\nu C_2|x|^{-(N + 2s)},
\end{align}
where $C_2 = C_2(\beta) > 0$ is a constant.

\textsf{Case 3} $x\in B^c_{R{\varepsilon}}(x_{\varepsilon})\cap B_r(x_{\varepsilon})$. We have
\begin{align}\label{eest4}
I_{\varepsilon}(x) &\geq d^{-1}\int_{B_r(x_{\varepsilon})}\frac{p_{\varepsilon}(x) - p_{\varepsilon}(y)}{|x - y|^{N + 2s}}dy = d^{-1}\int_{B_r(0)}\frac{p_{\varepsilon}(x) - p_{\varepsilon}(y + x_{\varepsilon})}{|x - x_{\varepsilon} - y|^{N + 2s}}dy.
\end{align}

Then for all $x\in B^c_{R{\varepsilon}}(0)\cap B_r(0)$, since $\beta > 2s$,
\begin{align}
\varepsilon^{2s} I_{\varepsilon}(x + x_{\varepsilon}) & \geq d^{-1}\nu\int_{B_r(0)}\frac{-(r - |y|)^{\beta}}{(r - |y|)^{N + 2s}}dy = d^{-1}\nu C_4,
\end{align}
where $C_4 = C_4(\beta,N) > 0$ is a constant.

We note that since the above constants $C_1,\ C_3,\ C_3$ and $C_4$  are all independent of $\overline{C}$, for $\nu$ small enough, we have
\begin{align}\label{est5}
\nonumber &\varepsilon^{2s}(-\Delta)^s\overline{U}_{\varepsilon}(x) + (1 - \delta)V(x)\overline{U}_{\varepsilon}(x)\\
\nonumber=& \varepsilon^{4s}p_{\varepsilon}(x)(-\Delta)^sw_{\mu}(x) + \varepsilon^{2s}(1 - \delta)V(x)p_{\varepsilon}(x)\overline{u}_{\varepsilon}(x) + \varepsilon^{4s}I_{\varepsilon}(x)\vspace{2mm}\\
\geq &\left\{
                      \begin{array}{ll}
                         \varepsilon^{2s}(1 - \delta)\overline{C}\big(\inf_{{\Lambda}}V)d^{-1} - \varepsilon^{2s}d^{-1}\nu \widetilde{C} , & x\in B_r(x_{\varepsilon})\cap B^c_{R\varepsilon}(x_{\varepsilon}) \vspace{2mm}\\
                         \varepsilon^{2s}(1 - \delta)\big(\inf_{{\Lambda}}V)\overline{C}d^{-1} - \varepsilon^{2s}d^{-1}\nu \widetilde{C}, & x\in B^c_r(x_{\varepsilon})\cap \Lambda \vspace{2mm}\\
                         \overline{C}\big(\inf_{x\in\Lambda^c}V(x)|x|^{2s})(1 - \delta)\varepsilon^{2s}|x|^{-2s - \mu}  - \varepsilon^{2s}C_2\nu |x|^{-(N + 2s)}, & \Lambda^c
                       \end{array}
                     \right.\\
\nonumber\geq &\left\{
                      \begin{array}{ll}
                        0, & x\in \Lambda \cap B^c_{R\varepsilon}(x_{\varepsilon})\vspace{2mm}\\
                        C\varepsilon^{2s}|x|^{-2s - \mu}, & \Lambda^c.
                       \end{array}
                     \right.
\end{align}
For the $\nu$ above we can choose $\overline{C} = \overline{C}(\nu,\varepsilon,\beta)$ large enough  such that $\overline{U}_{\varepsilon}(x)\geq C_{\infty}$ in $B_{R\varepsilon}(x_{\varepsilon})$.

Since $p > 2 + \frac{2s}{N - 2s}$, we let $\mu$  close to $N - 2s$, and then choose $t\in (2s,\mu(p - 2))$ and $\sigma\in (0,2s(p - 2))$ to define
\begin{align}\label{eq4.9}
  P_{\varepsilon}(x) = \frac{\varepsilon^{\sigma}}{|x|^{t}}\chi_{\mathbb{R}^N\backslash\Lambda}.
\end{align}
Obviously, for $\varepsilon$ small enough,
\begin{equation}\label{eq4.10}
\overline{U}^{p - 2}_{\varepsilon}(x) = \overline{C}^{p - 2}\frac{\varepsilon^{2s(p - 2)}}{|x|^{\mu(p - 2)}}\leq \frac{\varepsilon^{\sigma}}{|x|^{t}} = P_{\varepsilon}(x)\ \ \forall x\in \Lambda^c
\end{equation}
and
\begin{align}\label{eq4.11}
\left\{
  \begin{array}{ll}
    \varepsilon^{2s}(-\Delta)^s\overline{U}_{\varepsilon}(x) + (1 - \delta)V(x)\overline{U}_{\varepsilon}(x)\geq P_{\varepsilon}(x)\overline{U}_{\varepsilon}(x), & x\in B^c_{R\varepsilon}(x_{\varepsilon}),\vspace{2mm} \\
    \overline{U}_{\varepsilon}(x) \geq C_{\infty}, & x\in B_{R\varepsilon}(x_{\varepsilon}).
  \end{array}
\right.
\end{align}
Finally, by Corollary \ref{cr4.3}, it is easy to check  that $(P_1)$ is satisfied since $t > 2s$ and $(P_2)$ is satisfied for small $\varepsilon$ since $\sigma > 0$.

\end{proof}

\begin{remark}\label{re4.7}
Our construction \eqref{eq4.9} requires $p > 2 + \frac{2s}{N - 2s}$, which combines with  Remark \ref{re3.5} and Corollary \ref{cr4.3} implies that  the assumption $(V_2)$ on $V$ is necessary.
\end{remark}

\noindent Now  we complete the proof of  Theorem \ref{th1.1}.
\\
\textsf{Proof of Theorem} \ref{th1.1}. Putting $v_{\varepsilon} = u_{\varepsilon} - \overline{U}_{\varepsilon}$. Propositions \ref{pr4.1} and \ref{pr4.4} imply that
\begin{equation}\label{eq4.12}
\left\{
  \begin{array}{ll}
    \varepsilon^{2s}(-\Delta)^{s}v_{\varepsilon} + ((1 - \delta)V - P_{\varepsilon})v_{\varepsilon}\leq 0, & \text{in}\ \mathbb{R}^N\backslash B_{R{\varepsilon}}(x_{\varepsilon})\vspace{2mm}\\
    v_{\varepsilon}\leq 0, & \text{on}\ B_{R{\varepsilon}}(x_{\varepsilon}).
  \end{array}
\right.
\end{equation}
Then arguing by contradiction we can verify  that $v_{\varepsilon}\leq 0$ in $\mathbb{R}^N$. Hence $u^{p - 2}_{\varepsilon}\leq \overline{U}^{p - 2}_{\varepsilon}$ in $B^c_{R\varepsilon}(x_{\varepsilon})$,
 and by \eqref{eq4.10}, $u_{\varepsilon}^{p - 2} < P_{\varepsilon}$ on $\mathbb{R}^N\backslash\Lambda$.

 As a result,  $u_{\varepsilon}$ is indeed the solution of the original problem \eqref{eq1.1}.


\begin{thebibliography}{99}
\bibitem{100}
Alves, C., Miyagaki, O.: Existence and concentration of solution for a class of fractional elliptic equation in $\mathbb{R}^N$ via penalization method. Calc. Var. Partial Differ. Equ. \textbf{55}, 1-19 (2016)

\bibitem{5}
{Ambrosetti, A., Badiale, M., Cingolani, S.}: {Semiclassical states of nonlinear Schr\"{o}dinger equations}. Arch. Ration. Mech. Anal.
\textbf{140}(3), 285-300 (1997)

\bibitem{6}
{Ambrosetti, A., Malchiodi, A.}: {Perturbation Methods and Semilinear Elliptic Problems on $\mathbb{R}^N$}. Progress in Mathmatics, vol. 240. Birfh$\ddot{a}$user Verlag, Basel (2006)

\bibitem{14}
{Ambrosetti, A., Malchiodi, A., Ni, W. M.}: {Singularly perturbed elliptic equations with symmetry: existence
of solutions concentrating on spheres}. I. Comm. Math. Phys. \textbf{235}(3), 427-466 (2003)



\bibitem{21}
{Albert, J. P.,Bona, J. L., Saut, J.-C}: {Model equations for waves in stratified fluids}. Proc. Roy.
Soc. London Ser. A \textbf{453}, 1233-1260 (1997)



\bibitem{16}
{Bonheure, D., Cingolani, S., Nys, M.}: {Nonlinear Schr\"{o}dinger equation: concentration on circles driven by an external maganetic field}. Calc. Var. Partial Diff. Equ. \textbf{55}, 82 (2016)


\bibitem{30}
{Bonheure, D., Van Schaftingen, J.}: {Groundstates for the nonlinear Schr\"{o}dinger equation with potential
vanishing at infinity}. Ann. Mat. Pura Appl. (4) \textbf{189}(2), 273-301 (2010)



\bibitem{18}
{Cerami, G., Passseo, D., Solimini, S.}: {Infinitely many positive solutions to some scalar field equations with nonsymmetric coefficients}. Comm. Pure Appl. Math., \textbf{66} (2013), 372-413.

\bibitem{25}
{Chen, G., Zheng, Y.}: {Concentration phenomena for fractional noninear Schr\"{o}dinger equations. Commun. Pure Appl. Anal}. \textbf{13}, 2359-2376 (2014)


\bibitem{24}
{Caffarelli, L., Silvestre, L.}: { An extension problem related to the fractional Laplacian}. Commun. Partial Differ. Equ. \textbf{32}, 1245-1260 (2007)


\bibitem{7}
{Cingolani, S., Lazzo, M.}: {Multiple semiclassical standing waves for a class of nonlinear Schr\"{o}dinger
equations.}. Topol. Methods Nonlinear Anal. \textbf{10}(1), 1-13 (1997)

\bibitem{8}
{Cingolani, S., Lazzo, M.}: {Multiple positive solutions to nonlinear Schr\"{o}dinger equations with competing
potential functions}. J. Differ. Equ. \textbf{160}(1), 118-138 (2000)


\bibitem{9}
{del Pino, M., Felmer, P.L.}: {Local mountain passes for semilinear elliptic problems in unbounded domains.
Calc}. Var. Partial Differ. Equ. \textbf{4}(2), 121-137 (1996)

\bibitem{10}
{del Pino, M., Felmer, P.L.}: {Semi-classical states for nonlinear Schr\"{o}dinger equations}. J. Funct. Anal.
\textbf{149}(1), 245-265 (1997)

\bibitem{15}
{del Pino, M., Kowalczyk, M., Wei, J.}: {Concentration on curves for nonlinear Schr\"{o}dinger equations}.
Comm. Pure Appl. Math. \textbf{60}(1), 113-146 (2007)




\bibitem{4}
{ Di Nezza, E., Palatucci, G., Valdinoci E.}: {Hitchhiker¡¯s guide to the fractional Sobolev spaces}. Bull. Sci. Math. \textbf{136} 521-573  (2012)


\bibitem{1}
{Felmer, P., Quaas, A., Tan, J.}: {Positive solutions of the nonlinear Schr¡§odinger equation with the fractional
Laplacian}, Proc. Roy. Soc. Edinburgh. Sect. A \textbf{142} 1237-1262 (2012)



\bibitem{11}
{Floer, A., Weinstein, A.}: {Nonspreading wave packets for the cubic Schr\"{o}dinger equation with a bounded
potential}. J. Funct. Anal. \textbf{69}(3), 397-408 (1986)

\bibitem{19}
{Frank L., Lenzmann, E.}: {Uniqueness of non-linear ground states for fractional Laplacians in $\mathbb{R}$}. Acta. Math., \textbf{210}, 261-318, (2013)

\bibitem{20}
{Frank L., Lenzmann, E., Silvestre, L.}: {Uniqueness of radial solutions for the fractional Laplacians}. Comm. Pure. Appl. Math., \textbf{69}  1671-1726(2016)


\bibitem{26}
{Fall, M. M., Mahmoudi, F., Valdinoci, E.}: {Ground states and concentration phenomena for the fractional Schr\"{o}dinger equation}. Nonlinearity
\textbf{28}, 1937-1961 (2015)



\bibitem{2}
{Laskin, N.}: {Fractional Schr\"{o}dinger equation}, Phys. Lett. A \textbf{268} 298-305 (2000)

\bibitem{3}
{Laskin, N.}: {Fractional quantum mechanics and Levy path integrals}, Phys. Lett. A \textbf{268} 298-305 (2000)

\bibitem{23}
{Ma, L., Zhao, L}. {Classification of positive solitary solutions of the nonlinear Choquard equation}.
Arch. Ration. Mech. Anal. \textbf{195}, no. 2, 455-467 (2010)



\bibitem{12}
{Oh, Y. G.}: {Existence of semiclassical bound states of nonlinear Schr\"{o}dinger equations with potentials of
the class $(V)_a$}. Comm. Partial Differ. Equ. \textbf{13}(12), 1499-1519 (1988)





\bibitem{13}
{Rabinowitz, P.H.}: {On a class of nonlinear Schr\"{o}dinger equations}. Z. Angew. Math. Phys. \textbf{43}(2), 270-291
(1992)

\bibitem{27}
{Shang, X., Zhang, J.}: {Concentrating solutions of nonlinear fractional Schr\"{o}dinger equation with potentials}. J. Differ. Equ. \textbf{258}, 1106-1128 (2015)

\bibitem{28}
{Vitaly, M., Van Schaftingen, J.}: {Semi-classical states for the Choquard equation}. Springer-Verlag, Berlin Heidelberg (2014)



\bibitem{29}
{Willem M.}: {Analyse harmonique r\'{e}elle,} Hermann, Paris, (1995)

\bibitem{22}
{Weinstein, M. I.}: {Existence and dynamic stability of solitary wave solutions of equations arising
in long wave propagation}. Comm. Partial Differential Equations \textbf{12}, no. 10, 1133-1173 (1987)

\bibitem{17}
{Wei, J., Yan, S.}: {Infinitely many positive solutions for the nonlinear Schr\"{o}dinger equations in $\mathbb{R}^N$}. Calc. Var. Partial Differ. Equ., \textbf{37}, 423-439 (2010)










\end{thebibliography}
\end{document}